\setlist[enumerate]{label=(\thethm.\arabic*), before={\setcounter{enumi}{\value{equation}}}, after={\setcounter{equation}{\value{enumi}}}}
\newcommand{\CC}{\mathbb{C}}
\newcommand{\Q}{\mathbb{Q}}
\newcommand{\DD}{\mathbb D}
\newcommand{\Z}{\mathbb{Z}}
\newcommand{\ddbar}{\partial\bar{\partial}}
\newcommand{\wh}{\widehat}
\newcommand{\wt}{\widetilde}
\newcommand{\cX}{\mathcal{X}}
\newcommand{\cF}{\mathcal{F}}
\newcommand{\cE}{\mathcal{E}}
\newcommand{\Lie}{\mathcal{L}}
\newcommand{\cC}{\mathcal{C}}
\newcommand{\cP}{\mathcal{P}}
\renewcommand{\O}{\mathcal{O}}
\newcommand{\ep}{\varepsilon}
\renewcommand{\epsilon}{\varepsilon}
\newcommand{\ol}{\overline}
\renewcommand{\leq}{\leqslant}
\renewcommand{\geq}{\geqslant}
\newcommand{\Div}{\mathrm{Div}}
\newcommand{\D}{D}
\newcommand{\Ker}{\mathrm{Ker}}
\newcommand{\dbar}{\bar \partial}
\newtheorem{thm}{Theorem}[section]
\newtheorem{lemme}[thm]{Lemma}
\newtheorem{proposition}[thm]{Proposition}
\newtheorem{quest}[thm]{Question}
\newtheorem{claim}[thm]{Claim}
\newtheorem{conjecture}[thm]{Conjecture}
\newtheorem{cor}[thm]{Corollary}
\newtheorem{construction}[thm]{Construction}
\newtheorem{notation}[thm]{Notation}
\newtheorem{remark}[thm]{Remark}
\newtheorem{hyp}[thm]{Hypothesis}
\numberwithin{equation}{thm}
\title{Infinitesimal extension of pluricanonical forms}
\author{Junyan CAO, Mihai PAUN}
\address{Université Côte d’Azur, CNRS, LJAD, France}
\email{junyan.cao@unice.fr}
\address{Universit\"at Bayreuth, Mathematisches Institut, Lehrstuhl Mathematik VIII, Universit\"atsstrasse 30, D-95447, Bayreuth, Germany}
\email{mihai.paun@uni-bayreuth.de}
\begin{document} 

\maketitle

%\begin{dedication}
%\vspace{-1.5cm}
%{To Bo Berndtsson,\\ On the Occasion of His $70^{\rm th}$ Birthday}
%\end{dedication} 
%\vskip 1cm

\section{Introduction}

\noindent Let $p:\cX\to \DD$ be a holomorphic family of smooth, $n$-dimensional compact manifolds whose central fiber -denoted by $X$- is assumed to be K\"ahler. We denote by $\displaystyle K_\cX$ the canonical bundle of $\cX$ and let $\Lie\to \cX$ be an arbitrary line bundle.
\smallskip

\noindent We are interested in the extension properties of global sections of the adjoint bundle $K_\cX+ \Lie$ defined over the $k^{\rm th}$ infinitesimal neighborhood of the central fiber of $p$, where $k\geq 1$ is a positive integer.   
In other words, we consider the sheaves 
\begin{equation}\label{intr1}
\cF_k:= \left(K_\cX+ \Lie\right)\otimes \O_\cX/t^{k+1}\O_\cX,
\end{equation} 
and the corresponding spaces of sections $H^0(\cX, \cF_k)$.
\medskip

\noindent In this article we are establishing sufficient criteria for a given $\displaystyle s\in H^0\left(\cX, \cF_k\right)$ to be in the image of the map induced by the projection
\begin{equation}\label{intr2}\pi_k: \cF_{k+1}\to \cF_k.
\end{equation}
We are particularly interested in the case 
$\Lie= (m-1)K_\cX$ where $m\geq 1$ is a positive integer,
so we state next our main result in this setting. In order to do so we have to introduce further notations. Let $s$ be a holomorphic section of $\cF_k$; we denote by 
$h_L$ the metric on the bundle $L:= (m-1)K_X$ induced by the restriction of $s$ to the central fiber $X$.
%\begin{equation}\label{intr4}
%\mathfrak I\subset \mathcal O_X, \qquad \mathfrak I= \lim_{\ep\to 0}\mathcal I\left((1-\ep)\frac{m-1}{m}\Sigma\right)
%\end{equation}
%where for any $\mathbb R$-divisor $\displaystyle \Delta$ we denote by $\mathcal I(\Delta)$ the associated multiplier ideal sheaf.
\medskip

\noindent The following statement is the main result of the first part of our paper.

\begin{thm}\label{MT} Let $s$ be a section of the sheaf \eqref{intr1} with $\Lie= (m-1)K_\cX$, which admits  
a $\mathcal C^\infty$ extension $s_k$ so that if we write
%\begin{equation}\label{not2}
$\displaystyle \dbar s_k= t^{k+1}\Lambda_k,$
%\end{equation}
the integral 
\begin{equation}\label{intr7}
\int_X\left|\frac{\Lambda_k}{dt}\right|^2e^{-(1-\ep)\varphi_L}dV< \infty
\end{equation} 
converges for any positive $\ep> 0$.
Then there exists a section $\wh s$ of $\cF_{k+1}$ such that $s= \pi_k(\wh s)$.
\end{thm}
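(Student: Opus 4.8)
The plan is to solve the extension problem via a $\dbar$-equation with $L^2$ estimates, following the Ohsawa--Takegoshi philosophy adapted to the infinitesimal setting. First I would reinterpret the obstruction: a $\mathcal C^\infty$ extension $s_{k+1}$ of $s$ to $\cF_{k+1}$ exists if and only if we can correct the given smooth extension $s_k$ by a term that kills the $t^{k+1}$-part of $\dbar s_k$ modulo $t^{k+2}$. Concretely, writing $\dbar s_k = t^{k+1}\Lambda_k$, the class $[\Lambda_k/dt]$ restricted to $X$ is a $\dbar$-closed $(n,1)$-form (an $(0,1)$-form valued in $K_X + L = mK_X$), and the section $s$ lifts to $\cF_{k+1}$ precisely when this class is $\dbar$-exact with a solution whose norm we can control. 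So the heart of the matter is: solve $\dbar u = \Lambda_k/dt$ on $X$ with an $L^2$ bound, then the holomorphic lift is produced by subtracting $t^{k+1} u\wedge dt$ from $s_k$.

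The key steps, in order, are as follows. (1) Set up the $L^2$ machinery on the central fiber $X$ with the singular metric $h_L$ induced by $s|_X$ on $L = (m-1)K_X$; the twist by $mK_X = K_X + L$ means we are in the Ohsawa--Takegoshi/Nadel setting with the $\dbar$-operator acting on $(n,1)$-forms with values in $L$, where the curvature positivity needed is supplied by the $\ddbar$ of the weight plus a small Kähler term. (2) Verify that the hypothesis \eqref{intr7} is exactly the finiteness of $\int_X |\Lambda_k/dt|^2 e^{-(1-\ep)\vp_L}\, dV$, which says the right-hand side of the $\dbar$-equation lies in the relevant weighted $L^2$ space for the slightly-less-singular metric $e^{-(1-\ep)\vp_L}$; the use of $(1-\ep)$ rather than the full weight is what buys strict positivity after adding an $\ep$-multiple of a background Kähler form $\om$ on $X$, since $\ddbar\big((1-\ep)\vp_L\big) + \ep\, \om \geq \ep\,\om > 0$ in the sense of currents (the positivity of $\ddbar\vp_L$ coming from $s|_X$ being a holomorphic, hence psh-weight, section). (3) Apply Hörmander/Demailly $L^2$ existence (or its singular-metric version) to solve $\dbar u_\ep = \Lambda_k/dt$ with $\int_X |u_\ep|^2 e^{-(1-\ep)\vp_L}\om^n \leq \frac{1}{\ep}\int_X |\Lambda_k/dt|^2_{\om} e^{-(1-\ep)\vp_L}\om^n < \infty$. (4) Let $\ep \to 0$: extract a weak limit $u$ of the $u_\ep$ (the uniform bound for the full weight follows, e.g., after normalizing, from a diagonal/weak-compactness argument together with the monotone behaviour of the weights), obtaining $\dbar u = \Lambda_k/dt$ with $\int_X |u|^2 e^{-\vp_L}\, dV < \infty$; the finiteness of this integral is what guarantees that $u$, viewed as contributing $t^{k+1}u\wedge dt$, actually produces a \emph{holomorphic} section of $\cF_{k+1}$ rather than merely a smooth one — the $e^{-\vp_L}$-integrability forces the local pole order of $u$ along the relevant loci to be compatible with lying in the adjoint sheaf. (5) Set $\wh s := s_k - t^{k+1}(u\wedge dt)$ as a section of $\cF_{k+1}$; then $\dbar\wh s \equiv 0 \bmod t^{k+2}$, so $\wh s$ is holomorphic as a section of $\cF_{k+1}$, and by construction $\pi_k(\wh s) = \pi_k(s_k) = s$.

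The main obstacle I anticipate is Step (4): passing to the limit $\ep\to 0$ while keeping the solution in the correct weighted space. The $\ep$-regularization is forced because $\ddbar\vp_L$ alone need not be strictly positive (it is only $\geq 0$, and $h_L$ may be genuinely singular where $s|_X$ vanishes), so one cannot apply Hörmander directly with the unregularized weight; but then the estimate degenerates ($\frac1\ep\to\infty$), and one must argue that the specific right-hand side $\Lambda_k/dt$ — whose $e^{-(1-\ep)\vp_L}$-norm is \emph{uniformly} bounded by hypothesis \eqref{intr7} — allows a uniform-in-$\ep$ bound on $u_\ep$ after the correct normalization, e.g.\ by replacing the crude $\frac1\ep$ with a sharper estimate exploiting that the curvature form dominates $\ep\,\om$ precisely on the locus where the weight is finite, or by a twisted (Donnelly--Fefferman / Berndtsson-type) estimate. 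A secondary, more technical point is justifying that the limiting $u$ genuinely gives a section of the \emph{coherent} sheaf $\cF_{k+1}$ — i.e.\ that solving the $\dbar$-equation on the smooth fiber $X$ with the stated $L^2$ control is equivalent to lifting across the thickening — which requires unwinding the relation between $\cF_k$, $\cF_{k+1}$ and the conormal sequence of $X\subset \cX$, and checking that the Kähler hypothesis on $X$ is what makes the background form $\om$ (hence the whole $L^2$ setup) available.
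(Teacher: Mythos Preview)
Your identification of the obstruction is correct: lifting $s$ to $\cF_{k+1}$ is equivalent to showing that $\lambda_k := \frac{\Lambda_k}{dt}\big|_X$ is $\dbar$-exact as an $(n,1)$-form with values in $L = (m-1)K_X$. But the proposed method for proving exactness has a genuine gap.

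The problem is in Step~(3). You want to apply H\"ormander with the weight $e^{-(1-\ep)\varphi_L}$ and claim strict positivity from $\ddbar\big((1-\ep)\varphi_L\big) + \ep\,\omega \geq \ep\,\omega$. But $\omega$ is a K\"ahler form on $X$, not the curvature of any metric on $L$; adding $\ep\,\omega$ to the curvature current of $(L,h_L)$ changes the cohomology class, which is only permissible if $[\omega]$ lies in $c_1(L)\cdot\R$ --- i.e.\ essentially if $L$ is ample, hence $X$ projective. In the K\"ahler setting of the theorem the bundle $L=(m-1)K_X$ has no positivity whatsoever: the curvature of $(L,h_L)$ is the current $\frac{m-1}{m}[\mathrm{Div}(s)]$, which is only semi-positive and supported on a hypersurface. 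With merely $\Theta(L,h_L)\geq 0$ one has $H^1(X, mK_X)\neq 0$ in general, so a generic $\dbar$-closed $L^2$ form of type $(n,1)$ is \emph{not} $\dbar$-exact, and no amount of $\ep$-regularisation will change that. Your argument uses nothing about $\lambda_k$ beyond the $L^2$ bound \eqref{intr7}, so it would prove too much.

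What the paper does instead is exploit the specific origin of $\lambda_k$. By iterating a Lie derivative $\Lie_\Xi$ along a vector field transverse to $X$ (Lemma~\ref{linalg}), one obtains an identity
\[
\lambda_k = \dbar\alpha_k + D'(\beta_k)
\]
valid pointwise on $X\setminus(s=0)$, where $D'$ is the Chern connection of the \emph{singular} metric $h_L$ and $\alpha_k,\beta_k$ have poles of high order along $(s=0)$. The hypothesis \eqref{intr7} is used not to feed H\"ormander directly, but to control these singularities after a log-resolution of $\big(X,\frac{m-1}{m}\mathrm{Div}(s)\big)$: it guarantees that, after dividing out the integral part of the pullback, one can reduce to forms with only logarithmic poles (Proposition~\ref{c1}). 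The exactness of $\lambda_k$ then follows from a Hodge decomposition on the open manifold $X_0$ with a Poincar\'e-type metric (Theorem~\ref{Hodge}) --- the key being part~(ii): a $\dbar$-closed $L^2$ form that is in the image of $D'$ is automatically $\dbar$-exact, because on $X_0$ the curvature vanishes and the harmonic space is self-dual under $\star$. This uses the $D'$-primitive structure of $\lambda_k$, which your approach discards. The remaining ``lc'' contributions are handled by induction on the components of the exceptional divisor (Theorem~\ref{log007}).
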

\noindent The notations in \eqref{intr7} are as follows: we measure the $(n,1)$-form $\displaystyle \frac{\Lambda_k}{dt}\Big|_X$ with respect to an arbitrary Kähler metric on $X$ and the metric $e^{-(1-\ep)\varphi_L- \ep \phi_L}$ where $e^{-\phi_L}$ is a smooth metric on $L$ and $e^{-\varphi_L}=h_L$ is the singular metric induced by $s$.

\medskip

\noindent Coming back to the general setting, let $\cE_1,\dots, \cE_K$ be a set of line bundles 
 on $\cX$ such that 
\begin{equation}\label{intr5}
(1- r_0)c_1(\Lie)= r_0c_1(K_\cX)+ \sum_{i=1}^K r_ic_1(\cE_i),
\end{equation}
where $0\leq r_0< 1$ and  $r_j\geq 0$ for $j=2,\dots, K$ are rational numbers.
In \eqref{intr5} we denote by $c_1(\cE)$ the first Chern class of $\cE$. 
Let $s$ be a section of $\cF_k$ and and for each $i= 1,\dots K$ let $\sigma_i$ be a section of the sheaf $\displaystyle \cE_i\otimes \O_\cX/t^{k+2}\O_\cX$ (notice that this is one order higher than $s$). We denote by $h_L$ the metric induced on the bundle $L:= \Lie|_X$ by the set of holomorphic sections $(s, \sigma_i)$ restricted to the central fiber
--notice that this makes sense thanks to \eqref{intr5}. 
\medskip

\noindent Then we have the following version of Theorem \ref{MT}. 
\begin{thm}\label{007} Let $\displaystyle (s, \sigma_i)_{i=1,\dots, K}$ be a family of sections of $\displaystyle (\cF_k, \cE_i)_{i=1,\dots, K}$ respectively as above. We assume that $s$ admits a $\mathcal C^\infty$ extension $s_k$ such that if we write
$\displaystyle \dbar s_k= t^{k+1}\Lambda_k$
then \begin{equation}\label{intr6}\int_X\left|\frac{\Lambda_k}{dt}\right|^2e^{-(1-\ep)\varphi_L}dV< \infty\end{equation} for any positive $\ep> 0$. Then there exists a section $\wh s$ of $\cF_{k+1}$ such that $s= \pi_k(\wh s)$.
\end{thm}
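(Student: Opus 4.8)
The plan is to reduce Theorem \ref{007} to the situation of Theorem \ref{MT} by a twisting/covering trick, so that the only analytic input needed is the $L^2$-extension argument already used for the pluricanonical case. Concretely, clearing denominators in \eqref{intr5} one may write, for a suitable positive integer $N$, a relation of the form $N(1-r_0)\Lie= Nr_0K_\cX+ \sum_i Nr_i\cE_i$ as an honest isomorphism of line bundles (after possibly replacing $\cX$ by a finite base change, or absorbing torsion into the data). I would then pass to the cyclic cover $p':\cX'\to \DD$ determined by this relation together with the section $\prod_i\sigma_i^{\otimes n_i}$ extended to order $k+2$; on $\cX'$ the bundle whose sections we are extending becomes, up to a positive rational multiple, of the shape $K_{\cX'}+(m'-1)K_{\cX'}$, i.e.\ genuinely pluricanonical, and the section $s$ pulls back to a section $s'$ of the corresponding $\cF'_{k'}$. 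The integrability hypothesis \eqref{intr6} is designed precisely so that, after this identification, the pulled-back $\ol\partial$-defect $\Lambda'_{k'}$ satisfies the hypothesis \eqref{intr7} of Theorem \ref{MT}: the singular metric $h_L$ induced by $(s,\sigma_i)$ on $X$ is exactly the metric that, on the cover, is induced by the restriction of $s'$ to the central fiber, because $h_L$ was \emph{defined} using all of the data $(s,\sigma_i)$.

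With this dictionary in place, the second step is to apply Theorem \ref{MT} on the covering family to produce an extension $\wh{s'}$ of $s'$ to order $k'+1$, and then to descend it. Descent is where one must be careful: the cover carries a natural $\Z/N\Z$-action, and $\wh{s'}$ need not be invariant, but by averaging over the group one obtains an invariant extension, which then descends to a section $\wh s$ of $\cF_{k+1}$ on $\cX$ with $\pi_k(\wh s)= s$. One subtlety is that the cyclic cover is ramified along the zero divisor of $\prod_i\sigma_i$, so $\cX'$ is singular there; I would handle this either by working on $\cX'_{\rm reg}$ and invoking the fact that the relevant cohomology/extension statements extend across the codimension $\ge 2$ singular locus (Hartogs-type extension for the pluricanonical sheaves involved), or by first resolving and then checking that resolving does not destroy the $L^2$ hypothesis — the discrepancy contributions are covered by the factor $e^{-(1-\ep)\varphi_L}$ with its room to spare in $\ep$, which is exactly why the estimate is required to hold for \emph{every} $\ep>0$ rather than just one.

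The main obstacle I anticipate is not the extension step itself—that is handled verbatim by Theorem \ref{MT}—but rather the bookkeeping showing that the metric $h_L$ on $L= \Lie|_X$ built from $(s,\sigma_i)$ transports correctly under the cover and the base change, and that the order of vanishing is tracked consistently: $s$ is only given to order $k+1$ while the $\sigma_i$ are given to order $k+2$, and after raising to powers and taking roots on the cover these orders must conspire to give precisely a section of $\cF'_{k'}$ whose obstruction lives in the right infinitesimal neighborhood. I would isolate this as a lemma identifying $H^0(\cX,\cF_{k+1})$ with the invariant part of $H^0(\cX',\cF'_{k'+1})$ compatibly with the projections $\pi_k$, $\pi'_{k'}$, and verifying that the smooth extension $s_k$ and its $\ol\partial$-defect pull back to admissible data for Theorem \ref{MT}. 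Once that bridging lemma is in place, Theorem \ref{007} follows formally; so the write-up is mostly a matter of setting up the cover, the averaging, and the singularity-clearing carefully, with the analytic heart quarantined inside Theorem \ref{MT}.
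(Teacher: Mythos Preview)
Your proposal takes a genuinely different route from the paper, and it has a real obstacle.

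The paper's proof is essentially a one-line reduction: it does \emph{not} pass to any cover. Instead it observes that the sections $(s,\sigma_i)$ together induce a singular metric $h_L$ on $\Lie$ (via the numerical relation \eqref{intr5}), and hence a connection $D'_\cX$ with singularities along $\Div(s)\cup\bigcup_i\Div(\sigma_i)$. The only place in the proof of Theorem~\ref{MT} where the specific bundle $(m-1)K_\cX$ mattered was in the commutator computation $[\dbar, D'_\cX]$ of Lemma~\ref{linalg}: there one needed the curvature terms to vanish modulo $t^{k+1}$. In the present setting the contribution of each $\sigma_i$ to the curvature vanishes modulo $t^{k+1}$ precisely because $\sigma_i$ is given to order $k+2$ --- this is the sole reason for the asymmetry in orders between $s$ and the $\sigma_i$. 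Once the decomposition $\lambda_k=\dbar\alpha_k+D'\beta_k$ on $X\setminus(\Div(s)\cup\bigcup_i\Div(\sigma_i))$ is in hand, the rest of the proof of Theorem~\ref{MT} (log-resolution, Proposition~\ref{c1}, Theorem~\ref{log007}) is insensitive to whether $L$ is $(m-1)K_X$ or an arbitrary line bundle with a metric of the required type, and applies verbatim.

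Your covering approach runs into the following concrete problem: the $\sigma_i$ are sections of $\cE_i\otimes\O_\cX/t^{k+2}\O_\cX$, i.e.\ only jets along $X$, not sections over $\cX$ or over any open neighborhood of $X$. A cyclic cover of $\cX\to\DD$ (even after base change) requires an honest divisor or section on the total space; from an infinitesimal jet you cannot build a family $\cX'\to\DD$ to which Theorem~\ref{MT} would apply. One could attempt a cover of the non-reduced scheme $X_{k+1}$, but that object is not a smooth family over $\DD$, and Theorem~\ref{MT} gives no leverage there. The ``bridging lemma'' you anticipate would in effect have to reprove the whole analytic content directly on $X$, at which point the cover is doing no work. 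The paper's observation that the extra order on the $\sigma_i$ kills the curvature obstruction is exactly what replaces your covering idea, and it is both simpler and avoids the existence issue.
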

\medskip

\noindent A very interesting result in connection to Theorem \ref{007} was established by Cao, Demailly and Matsumura in \cite{CDM}, as follows.

\begin{thm}\label{CDM}\cite{CDM} Consider $p: {\mathcal X}\to \Delta$ a K\"ahler family, $(\mathcal L, h_{\mathcal L})$ a line bundle on $\mathcal X$ 
and let $u\in H^0(\mathcal X, \mathcal F_k)$ be a holomorphic section such that: 

\begin{itemize}

\item We have $\displaystyle i\Theta(\mathcal L, h_{\mathcal L})\geq 0$ on $\mathcal X$. 
    \smallskip 

 \item $u$ admits a $\mathcal C^\infty$ extension $u_k$ to $\mathcal X$ such that $\displaystyle 
\overline\partial u_k= t^{k+1}\Lambda_k$ together with
\[ \int_{\mathcal X}\left|{\Lambda_k}\right|^2e^{-\varphi_L}dV< \infty
\] 
 \end{itemize}  
Then $u$ extends to $\mathcal X$ (no estimates available).
\end{thm}

\noindent We remark that a-priori in Theorem \ref{MT} we do not have any semi-positively curved metric on the bundle $\mathcal L= (m-1)K_\cX$. 
Moreover, the $L^2$ condition in Theorem \ref{CDM} shows that our hypothesis \eqref{intr7} is natural, at least from the point of view of extension theorems.  
\medskip

\noindent Theorem \ref{007} is equally motivated by an important conjecture in K\"ahler geometry that we next recall. After Y.-T. Siu's "invariance of plurigenera" articles \cite{Siu98, Siu00} (see also \cite{Pau07}) concerning the extension of $s$ in case of a \emph{projective family} $\cX$,
the following very important problem is still open (despite of the \emph{avalanche} of articles and crucial achievements that followed Siu's work).

\begin{conjecture}\label{Siuconj}\cite{Siu00}
  Let $p:\cX\to \DD$ be a family of smooth, $n$-dimensional compact manifolds whose central fiber is denoted by $X$. We assume that the total space $\cX$ admits a K\"ahler metric. Then any holomorphic pluricanonical section defined on $X$ extends holomorphically to $\cX$.
\end{conjecture}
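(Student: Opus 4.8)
\medskip

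\noindent\textbf{A strategy towards Conjecture \ref{Siuconj}.} The plan is to combine Theorem \ref{MT} with an induction on the order of the infinitesimal neighbourhood, in the spirit of Siu's proof in the projective case, the twisting ample bundle being replaced by suitable tensor powers of the section to be extended.

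Let $u\in H^0(X, mK_X)$ be a pluricanonical section of the central fiber, and set $\Lie=(m-1)K_\cX$, so that $K_\cX+\Lie=mK_\cX$ and the sections of $\cF_k$ in \eqref{intr1} are precisely the sections of $mK_\cX$ over the $k^{\rm th}$ infinitesimal neighbourhood of $X$. We argue by induction on $k$: assume $\wh u_k\in H^0(\cX,\cF_k)$ extends $u$ (with $\wh u_0=u$), and seek a lift of $\wh u_k$ through $\pi_k$. By Theorem \ref{MT} it is enough to exhibit a $\mathcal C^\infty$ extension $s_k$ of $\wh u_k$ such that, writing $\dbar s_k=t^{k+1}\Lambda_k$, the integral $\int_X|\Lambda_k/dt|^2e^{-(1-\ep)\vp_L}\,dV$ converges for every $\ep>0$, where $e^{-\vp_L}=h_L$ is the metric on $L=(m-1)K_X$ induced by $u|_X$ --- heuristically $h_L=|u|^{-2(m-1)/m}$.

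The construction of a smooth extension $s_k$ meeting this bound is the substantive point, and here the auxiliary sections of Theorem \ref{007} enter. For a large integer $\ell$ the power $u^{\otimes\ell}$ is a section of $\ell mK_X$; running the same induction simultaneously for $u^{\otimes\ell}$ and invoking the numerical identity \eqref{intr5} with one auxiliary bundle $\cE_1$ numerically proportional to $\ell mK_\cX$, one extends $u^{\otimes\ell}$ one infinitesimal order beyond $\wh u_k$. The metric attached to the pair $(\wh u_k, u^{\otimes\ell})$ on the relevant neighbourhood then has a singularity along $\{u=0\}$ strictly milder than that of $|u|^{-2}$, and as $\ell\to\infty$ it converges to $h_L$. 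The effect, exactly as in Siu's argument, is that the a priori poles of $\Lambda_k/dt$ along $\{u=0\}$ are absorbed: choosing $s_k$ compatibly with the extended $u^{\otimes\ell}$ forces $\Lambda_k/dt$ to lie, to the required order, in the ideal generated by $u$, so that $|\Lambda_k/dt|^2e^{-(1-\ep)\vp_L}$ becomes locally integrable for every $\ep>0$. Theorem \ref{MT} then produces $\wh u_{k+1}$, completing the induction and yielding a formal section $\wh u_\infty\in\varprojlim_k H^0(\cX,\cF_k)$, i.e.\ a formal power series in $t$ extending $u$.

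The remaining, and decisive, step is to show that $\wh u_\infty$ converges to a holomorphic section of $mK_\cX$ over a possibly smaller disc. This reduces to a quantitative control of the successive extensions: it suffices that $\wh u_k$ can be chosen with $L^2$ norms against the fixed weight $e^{-\vp_L}$ growing at most geometrically in $k$, which in turn needs the constants implicit in Theorem \ref{MT} --- ultimately the curvature estimates it rests on --- to be uniform in $k$. In the projective case the ample twist supplies this uniformity; in the K\"ahler category it is exactly what is missing, and this is the step I expect to be the main obstacle. Consequently the method sketched here yields unconditionally only the conditional statement: \emph{Conjecture \ref{Siuconj} holds whenever the infinitesimal extensions can be chosen with $\|\wh u_k\|_{L^2(e^{-\vp_L})}\le C^{\,k+1}$ for a constant $C$ independent of $k$}, the construction of such a uniformly bounded sequence being the crux that remains open.
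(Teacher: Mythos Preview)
The statement is a \emph{conjecture}; the paper does not prove it, so there is no ``paper's proof'' to compare against. Your write-up is honest in presenting only a strategy, but two points deserve correction.

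First, you misidentify the decisive obstacle. The passage from ``$s$ extends to every infinitesimal neighbourhood'' to ``$s$ extends holomorphically to $\cX$'' is \emph{not} the missing step: the paper invokes \cite[Lemma~1.1]{Lev83} for exactly this (see the end of the proof of Corollary~\ref{corlev}), and no uniform growth bound on $\Vert \wh u_k\Vert$ is needed there. The genuine obstacle is upstream: verifying the $L^2$ hypothesis \eqref{intr7} of Theorem~\ref{MT} at each inductive step. The paper's partial results (Corollary~\ref{corlev}, Theorem~\ref{Lev83}, Theorem~\ref{hypo}) all work by giving geometric conditions under which \eqref{intr7} can be checked for every $k$; in general it is not known how to do this.

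Second, your proposed mechanism for forcing \eqref{intr7} via powers $u^{\otimes\ell}$ and Theorem~\ref{007} is circular. Theorem~\ref{007} requires the auxiliary sections $\sigma_i$ to be defined over $\O_\cX/t^{k+2}\O_\cX$, i.e.\ one order \emph{higher} than the section $s$ you are trying to lift. Extending $u^{\otimes\ell}$ to order $k+1$ is at least as hard as extending $u$ itself (the multiplier ideal only gets worse), so you cannot use the former as input to produce the latter. In Siu's projective argument the bootstrap works because an ample bundle provides genuinely new sections at each stage; taking powers of the single section $u$ gives you nothing independent. The assertion that ``$\Lambda_k/dt$ is forced to lie in the ideal generated by $u$'' is precisely the content of the conjecture and is not justified by the sketch.
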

\smallskip

\noindent  Among the articles dedicated to Conjecture \ref{Siuconj} we first mention here \cite{Lev83} and \cite{Lev85}, due to M.~Levine. A direct consequence of Theorem \ref{MT} is the following statement, which represents a more general version of the results in \cite{Lev83, Lev85}.

\begin{cor} \label{corlev}
Let $p:\cX\to \DD$ be a smooth holomorphic family whose central fiber $X$ is K\"ahler, and let $\displaystyle s\in H^0(X, mK_{X})$
be a pluricanonical section defined on $X$. We assume that the set of zeros of the 
ideal $$\mathfrak I:= \lim_{\ep\to 0}\mathcal I\left((1-\ep)\frac{m-1}{m}\Sigma\right)$$ is discrete, 
where $\Sigma$ is the divisor corresponding to $s$.
 Then any section $\tau \in H^0 (X, m K_X)$ admits a holomorphic extension to a neighborhood of the center fiber $X$.
 If moreover the total space $\cX$ is Kähler, then any section $\tau \in H^0 (X, m K_X)$ admits a holomorphic extension to $\cX$.
\end{cor}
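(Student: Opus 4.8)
The plan is to extend $s$ along $X$ one infinitesimal order at a time by iterating Theorem~\ref{MT} with $\Lie = (m-1)K_\cX$, and then to pass to a genuine holomorphic section of $mK_\cX$ over $\cX$. Set $L := (m-1)K_X$. Since any $C^\infty$ extension $s_k$ of an infinitesimal extension of $s$ restricts to $s$ on the central fiber, the singular metric $h_L = e^{-\varphi_L}$ it induces on $L$ is, at every stage, the one determined by $s$ itself: locally $e^{-\varphi_L} = |g|^{-2(m-1)/m}$ if $s = g\,(dz_1\wedge\cdots\wedge dz_n)^{\otimes m}$. Hence the multiplier ideal $\mathcal I((1-\ep)\varphi_L)$ appearing in \eqref{intr7} is precisely $\mathcal I\big((1-\ep)\tfrac{m-1}{m}\Sigma\big)$, and $\mathfrak I = \lim_{\ep\to 0}\mathcal I\big((1-\ep)\tfrac{m-1}{m}\Sigma\big)$ is a coherent ideal sheaf contained in $\mathcal I\big((1-\ep)\tfrac{m-1}{m}\Sigma\big)$ for every $\ep\in(0,1)$ (the ideals decrease as $\ep\downarrow 0$, and stabilise by Noetherianity). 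By hypothesis $Z := V(\mathfrak I)$ is discrete, hence finite since $X$ is compact, and therefore $V\big(\mathcal I((1-\ep)\tfrac{m-1}{m}\Sigma)\big)\subseteq Z$ for all $\ep$.

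We argue by induction, starting from $\tilde s_0 := s$ regarded as a section of $\cF_0 = (mK_\cX)|_X = mK_X$ (using $K_\cX|_X\cong K_X$). Suppose $\tilde s_k \in H^0(\cX,\cF_k)$ restricts to $s$. Pick any $C^\infty$ extension $s_k$ and write $\dbar s_k = t^{k+1}\Lambda_k$. The form $\alpha := \big(\Lambda_k/dt\big)\big|_X$ is a smooth $\dbar$-closed $mK_X$-valued $(n,1)$-form representing the class in $H^1(X,mK_X)$ that obstructs lifting $\tilde s_k$ to $\cF_{k+1}$ (via the long exact sequence attached to $0\to mK_X\to\cF_{k+1}\to\cF_k\to 0$); its Dolbeault class is independent of the choice of $s_k$, and, conversely, every smooth representative of that class equals $\big(\Lambda_k/dt\big)\big|_X$ for a suitable $C^\infty$ extension (changing $s_k$ by a smooth $O(t^{k+1})$ term changes $\alpha$ by $\dbar$ of a smooth form, and this operation is reversible).

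Now we arrange the hypothesis \eqref{intr7} of Theorem~\ref{MT}. Because $\alpha$ is $\dbar$-closed and $Z$ is finite, we may write $\alpha = \dbar\beta$ on a small ball around each point of $Z$ (Dolbeault--Grothendieck) and subtract $\dbar(\chi\beta)$ for a cut-off $\chi\equiv 1$ near the point, producing a cohomologous smooth form $\alpha'$ that vanishes on an open set $U\supseteq Z$; correspondingly we replace $s_k$ by a new $C^\infty$ extension whose associated form is $\alpha'$. Then, for every $\ep\in(0,1)$,
\[
\int_X\Big|\frac{\Lambda_k}{dt}\Big|^2 e^{-(1-\ep)\varphi_L}\,dV \;=\; \int_{X\setminus U}|\alpha'|^2\,e^{-(1-\ep)\varphi_L}\,dV \;<\;\infty,
\]
since $X\setminus U$ is compact and disjoint from $Z$, so for each of its points $p$ we have $\mathcal I((1-\ep)\tfrac{m-1}{m}\Sigma)_p=\O_{X,p}$, i.e. $e^{-(1-\ep)\varphi_L}$ is locally integrable there, while $|\alpha'|$ is bounded for a fixed smooth metric; for $\ep\ge 1$ the weight only improves. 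Theorem~\ref{MT} then yields $\wh s =: \tilde s_{k+1}\in H^0(\cX,\cF_{k+1})$ with $\pi_k(\tilde s_{k+1})=\tilde s_k$, completing the induction.

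The system $(\tilde s_k)_k$ is compatible under the $\pi_k$ and defines a formal section of $mK_\cX$ along $X$ restricting to $s$. To upgrade this to a genuine holomorphic extension over $\cX$ one keeps track of $L^2$-norms throughout the iteration: the estimates in the proof of Theorem~\ref{MT} can be arranged so that the corrections $\tilde s_{k+1}-\tilde s_k$ decay geometrically, making $\sum_k(\tilde s_{k+1}-\tilde s_k)$ converge to a holomorphic section of $mK_\cX$ with the required restriction, which is the convergence mechanism used by Siu in the projective case and by Cao--Demailly--Matsumura in \cite{CDM}. The heart of the matter, and the main obstacle, is the inductive step: identifying $\big(\Lambda_k/dt\big)\big|_X$ with the deformation-theoretic obstruction and realising the surgery near $Z$ by an honest $C^\infty$ modification of $s_k$ on the total space; and, in the last step, extracting enough uniformity from Theorem~\ref{MT} to pass from a formal to a convergent extension.
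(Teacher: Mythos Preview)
Your inductive step is correct and matches the paper's argument almost exactly. The paper phrases the surgery near $Z$ cohomologically---observing that $H^1\big(X,\O_X(mK_X)\otimes\O_X/\mathfrak I\big)=0$ since $\O_X/\mathfrak I$ is supported on a finite set, so $\lambda_k=b_k+\dbar a_k$ with $b_k$ taking values in $\mathfrak I$---while you do it by hand with the local $\dbar$-Poincar\'e lemma and a cut-off. These are the same manoeuvre, and your verification that $e^{-(1-\ep)\varphi_L}$ is locally integrable off $Z$ (because $V\big(\mathcal I((1-\ep)\tfrac{m-1}{m}\Sigma)\big)\subseteq Z$) is fine.

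The genuine gap is your last paragraph. You claim that the estimates in Theorem~\ref{MT} ``can be arranged so that the corrections $\tilde s_{k+1}-\tilde s_k$ decay geometrically'', invoking the Siu/CDM mechanism. But Theorem~\ref{MT} is a purely qualitative vanishing statement: its proof goes through Hodge decomposition and residue computations on a log resolution and yields no $L^2$ bound on the primitive, let alone one uniform in $k$. The Siu/CDM convergence relies on Ohsawa--Takegoshi extension with an explicit constant and a \emph{global} semi-positive metric on $\Lie$, neither of which is available here. So your passage from formal to convergent is unjustified.

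The paper handles this step differently and without any estimates: once $s$ extends to every infinitesimal neighbourhood, one invokes \cite[Lemma~1.1]{Lev83}, which says that a section of $mK_X$ admitting compatible extensions to all $\cF_k$ extends holomorphically to $\cX$. This is a general formal-to-convergent fact for proper families (ultimately resting on Grauert's coherence of direct images and semicontinuity), and requires nothing further from Theorem~\ref{MT}. Replacing your final paragraph with this citation completes the proof.
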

\smallskip

\noindent  Furthermore we obtain a new proof of the next statement, which is the main result in \cite{Lev83} (cf. Section 6). Our main objective in doing that is to understand better the set-up in which the hypothesis \eqref{intr6} is verified.
\begin{thm}\label{Lev83} \cite{Lev83}
Let $\displaystyle s\in H^0(X, mK_{X})$ be a pluricanonical section defined on the 
central fiber. We assume that the set of zeroes $(s=0)\subset X$ of $s$ is non-singular.
Then $s$ admits an extension to $\cX$.
\end{thm}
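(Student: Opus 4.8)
The plan is to apply Theorem \ref{MT} inductively, extending $s$ one infinitesimal order at a time; the whole difficulty is to verify hypothesis \eqref{intr7} at each step, and this is precisely the "set-up'' the authors want to understand. Since the normal bundle $N_{X/\cX}$ is holomorphically trivial we may identify $mK_\cX|_X=mK_X$ and $L:=(m-1)K_\cX|_X=(m-1)K_X$, and in a local frame with $s=f\,(dz_1\wedge\cdots\wedge dz_n)^{\otimes m}$ the metric attached to $s$ is $e^{-\varphi_L}=|f|^{-2(m-1)/m}$. Suppose $s$ has been extended to $\sigma_k\in H^0(\cX,\cF_k)$. The obstruction to lifting $\sigma_k$ across $\pi_k$ is the class $\delta_k\in H^1(X,mK_X)$ given by the connecting map of $0\to mK_X\xrightarrow{\,t^{k+1}\,}\cF_{k+1}\to\cF_k\to 0$; for any $\mathcal C^\infty$ extension $s_k$ of $\sigma_k$ with $\dbar s_k=t^{k+1}\Lambda_k$, the form $\Lambda_k/dt|_X$ — read, via $mK_X=K_X+L$, as an $(n,1)$-form with values in $L$ — represents $\delta_k$, and as $s_k$ runs over all such extensions this representative runs over the whole Dolbeault class of $\delta_k$. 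Hence Theorem \ref{MT} yields $\delta_k=0$, i.e.\ a lift of $\sigma_k$, as soon as $\delta_k$ admits \emph{some} representative which is square integrable for $e^{-(1-\ep)\varphi_L}$ for every $\ep>0$.

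Next I would quantify what that integrability demands, and this is where the hypothesis enters. Let $D$ be the zero divisor of $s$ (with multiplicities); by assumption its support is non-singular, so locally $D=\{z_1^{a}=0\}$ with $z_1$ a coordinate, $s=z_1^{a}u\,(dz)^{\otimes m}$ ($u$ a unit, $a\ge1$), whence $e^{-(1-\ep)\varphi_L}\asymp|z_1|^{-2a(1-\ep)(m-1)/m}$ near $D$. As $\varphi_L$ is smooth off $D$ and $X$ is compact, a one-variable computation shows that a form taking values in $\O_X(-D)\otimes mK_X$ (equivalently, divisible by the local equation of $D$) has $|\cdot|^2e^{-(1-\ep)\varphi_L}$ bounded, hence is square integrable for every $\ep>0$. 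So it suffices that $\delta_k$ lie in the image of $H^1(X,\O_X(-D)\otimes mK_X)\to H^1(X,mK_X)$, i.e.\ that $\delta_k$ restrict to $0$ in $H^1(D,(mK_X)|_D)$; the partition-of-unity Čech–Dolbeault comparison then produces the desired representative with values in $\O_X(-D)\otimes mK_X$.

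The heart of the proof is to keep this true along the induction. I would strengthen the inductive statement to: \emph{$s$ extends to $\sigma_k\in H^0(\cX,\cI_D\cdot\cF_k)$}, where $\cI_D$ is the ideal sheaf of $D$ inside $\cX$; the base case is $\sigma_0=s$ (which lies in $\cI_D\cdot mK_X$ by definition of $D$). Granting level $k$: the local equation of $D$ is a non-zero-divisor on $\O_\cX/t^{k+1}$, so $0\to mK_X\to\cF_{k+1}\to\cF_k\to 0$ stays exact after $\otimes\,\O_\cX/\cI_D$, and by naturality of the connecting map $\delta_k$ restricts to $D$ as the obstruction to lifting $\sigma_k|_D=0$, which is $0$. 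By the previous paragraph \eqref{intr7} holds, and Theorem \ref{MT} provides a lift $\wh\sigma\in H^0(\cX,\cF_{k+1})$ of $\sigma_k$. It remains to adjust $\wh\sigma$, within the $H^0(X,mK_X)$-torsor of lifts of $\sigma_k$, so that $\wh\sigma\in\cI_D\cdot\cF_{k+1}$: since $\wh\sigma$ restricted to $D\subset\cX$ is divisible by $t^{k+1}$, write it as $t^{k+1}\mu$ with $\mu$ a section of $(mK_X)|_D$; one can achieve $\wh\sigma\in\cI_D\cdot\cF_{k+1}$ exactly when $\mu$ extends to a global section of $mK_X$ over $X$, the obstruction being a secondary class in $H^1(X,\O_X(-D)\otimes mK_X)\cong H^1(X,\O_X)$, the isomorphism being induced by the trivialisation of $\O_X(-D)\otimes mK_X$ by $s$.

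\emph{Showing that this secondary obstruction vanishes is the step I expect to be the main difficulty}, and it is where non-singularity of the zero set of $s$ is used decisively: one analyses the secondary class through the exact sequence attached to $s\colon\O_X\hookrightarrow\O_X(D)\otimes mK_X$, exploiting that $s$ is by construction already known to extend one further order, and — most plausibly — extracting from the proof of Theorem \ref{MT} (or invoking Theorem \ref{007} with the divisor $D$ supplying the auxiliary data $\sigma_i$) that the lift it produces inherits the vanishing of the data along $D$; equivalently, one re-chooses $\sigma_k$ within its own extension torsor to absorb the class. Finally, having extended $s$ compatibly over all the $\cF_k$, one obtains a section over the formal neighbourhood of $X$ in $\cX$; since $p$ is proper, the theorem on formal functions (or, following \cite{Lev83}, a direct argument using the uniformity in $k$ of the solutions of the $\dbar$-equations) upgrades it to an honest holomorphic section of $mK_\cX$ over a neighbourhood of $X$, hence — after the harmless shrinking of $\DD$ — to the desired holomorphic extension of $s$ to $\cX$.
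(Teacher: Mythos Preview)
Your proof has a genuine gap that you yourself identify: the ``secondary obstruction'' to keeping the lift inside $\cI_D\cdot\cF_{k+1}$ lives in $H^1(X,\O_X)$, and you give no argument for its vanishing. On a general compact K\"ahler manifold this group is nonzero, so the speculation that one might ``re-choose $\sigma_k$ within its own extension torsor'' or ``extract from the proof of Theorem~\ref{MT}'' the needed vanishing is not a proof. Theorem~\ref{MT} produces \emph{some} lift, with no control on its behaviour along $D$; nothing in its statement or proof forces the new $t^{k+1}$-coefficient to lie in the image of $H^0(X,mK_X)\to H^0(D,mK_X|_D)$. Without closing this gap the induction breaks after the first step.

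It is worth contrasting with what the paper actually does, because the strategy is quite different from yours. The paper does \emph{not} try to verify the $L^2$ hypothesis \eqref{intr7} and then invoke Theorem~\ref{MT}. Instead, at each level $k$ it exploits the smoothness of the zero set to choose coordinates $(t,w_i)$ with $w_i^1=f_i$ equal to the $k$-jet of $s_k$, and builds a vector field $\Xi_k$ (via a partition of unity of the local $\partial/\partial t$'s in these coordinates) which is tangent to $(f_i=0)$ modulo $t^k$. With this particular $\Xi_k$, the forms $\alpha_k,\beta_k$ produced by the Lie-derivative algebra of Lemma~\ref{linalg} are shown (Lemma~\ref{trivca}) to be \emph{smooth} on $X$, not merely to satisfy an $L^2$ bound. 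One is then in the klt case ($E=0$) of the Hodge-theoretic machinery, and $\lambda_k$ is $\dbar$-exact directly. Thus the non-singularity hypothesis is used to construct good primitives, not to control a cohomological obstruction along $D$; this sidesteps entirely the secondary extension problem that blocks your approach.
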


\noindent We highlight next some parts of the proof of Theorem \ref{MT} which seem interesting to us. By hypothesis the section $s$ admits a $\mathcal C^\infty$ extension $s_k$ such that
\begin{equation}\label{int100}
\dbar s_k= t^{k+1}\Lambda_k
\end{equation}
for some form $\Lambda_k\in \mathcal C^\infty_{n+1, 1}(\cX, (m-1)K_{\cX})$ whose restriction to $X$ is $\dbar$-closed. The first step is to show that there exist forms $\alpha_k$ and $\beta_k$
which are smooth in the complement of the support of $(s=0)\subset X$ and such that
\begin{equation}\label{int101}
\frac{\Lambda_k}{dt}\Big|_X= \dbar \alpha_k+ D'(\beta_k)
\end{equation}
holds pointwise in the complement of $s=0$, where $D'$ is the $(1,0)$-part of the Chern connection on $L:= (m-1)K_X$
induced by the given section $\displaystyle s= s_k|_X$.
\smallskip

\noindent Note that the equality \eqref{int101} is true in general, i.e. without any $L^2$ assumption. But the problem is that the $L$-valued forms $\alpha_k$ and $\beta_k$ have singularities of type $\displaystyle \frac{\Phi}{s^{k+1}}$, where $\Phi$ is smooth. We therefore find ourselves in a rather strange situation, in which the LHS of \eqref{int101} is non-singular but the primitives appearing in
the RHS are meromorphic. Of course, this is -so to speak- the price to pay in order to have an intrinsic decomposition as in \eqref{int101}.
\medskip

\noindent Anyway, in order to prove Theorem \ref{MT}, it would be sufficient to establish the following result which we state separately since we find it interesting in itself.

\begin{thm}\label{007'} Let $\gamma$ be a $\dbar$-closed smooth $(n, 1)$-form on a compact K\"ahler manifold $X$ with values in $L$ such that the following hold.
	\begin{enumerate}
		
		\item[\rm (a)] The bundle $L$ is endowed with a singular metric $h_L=e^{-\varphi_L}$ such that
		$$\varphi_L= \sum_i  q_i\log|f_i|^2 +C^\infty, $$
		for some rational coefficients $q_i \in \mathbb Q^+$, where $f_i$ are holomorphic functions and such that the curvature current $i\Theta_{h_L}$ is supported in a proper subvariety of $X$.
		
		\item[\rm (b)] We suppose that 
		$$\int_X |\gamma|^2 e^{-(1-\ep) \varphi_L} <+\infty $$
		for every $\ep >0$. 
		
		\item[\rm (c)]  Set $Y:=\{ h_L = +\infty\}$.
		There exists a $(n-1,1)$-form $\beta$ and a $(n, 0)$-form $\alpha$ with poles along $Y$ such that 
		\begin{equation}\nonumber
			\gamma= \dbar\alpha+ D' _{h_L}(\beta)
		\end{equation}  
		on $X\setminus Y$, where $D' _{h_L} $ is induced by the singular metric $h_L$ on $L$. 
	\end{enumerate}
	Then $\gamma$ is $\dbar$-exact.
\end{thm}
\medskip

\noindent The assumption (c) above means that the coefficients of the two forms can be locally written as 
$\displaystyle \frac{g}{\prod f_i^N}$, where $g$ is smooth. For the proof of Theorem \ref{007'}, an important point is that after the log-resolution of $\big(X, \Div(s)\big)$ combined with a few other reductions the singularities of $\alpha$ and $\beta$ can be improved significantly. 
\medskip

\noindent The last step of the proof consists in showing that Theorem \ref{007'} is a consequence of the following general vanishing theorem, established in \cite{CP23}.

\begin{thm}\label{ddbar} \cite{CP23}
	Let $X$ be a $n$-dimensional compact Kähler manifold and let $E$ be a snc divisor on $X$, $s_E$ be the canonical section of $E$.   Let $(L, h_L)$ be a holomorphic line bundle on $X$ such that
		$$i\Theta_{h_L} (L) = \sum q_i [Y_i] + \theta_L,$$ where $q_i \in ]0,1[ \cap \mathbb Q$, $E +\sum Y_i$ is snc, and the form $\theta_L$ is smooth, semi-positive.
Let $\lambda$ be a $\dbar$-closed smooth $(n,q)$-form with value in $L+E$. If there exists $\beta_1$ and $\beta_2$, two $L$-valued $(n-1, q)$-form and $(n-1, q-1)$-form with log poles along $E+\sum Y_i$ respectively, such that
\begin{equation}\label{yes!}
\frac{\lambda}{s_E} = D' _{h_L}\beta_1 + \theta_L \wedge \beta_2 \qquad\text{ on } X\setminus (E+\sum Y_i) ,
\end{equation}	
then the form $\lambda$ is $\dbar$-exact, i.e, 
	the class $[\lambda]=0 \in H^q (X, K_X +L+E)$.
\end{thm}
\medskip

\noindent In the second part of our article we are concerned more specifically with (extensions of) sections of $\cF_i$ for $i=0,1$. The majority of our results can be seen as applications of Theorem \ref{MT}, as follows. 
\smallskip

\noindent $\bullet$  In subsection \ref{smoothdiv},  we provide an alternative argument for Theorem \ref{Lev83}. The main 
point is to show that if the divisor $s=0$ is smooth, then for each order $k$ we can construct a particular vector field 
$\Xi$, so that the resulting forms $\alpha_k$ and $\beta_k$ are automatically smooth (hence in particular $L^2$, given that the multiplier ideal sheaf corresponding to $\frac{m-1}{m}\Div(s)$ is trivial). Then we can extend $s$ over infinitesimal nbd's of any order and the conclusion follows. 
\smallskip

\noindent $\bullet$ Even if the hypothesis \eqref{intr7} in statement \ref{MT} fits perfectly in the $L^2$-landscape, it 
is somehow unnatural in the context of Siu's conjecture. 
Nevertheless in subsection \ref{sufcond}, we formulate  
a sufficient condition for \eqref{intr7} to hold, as follows. Consider a pluricanonical section $s$ on the central fiber of $p:\cX\to \DD$, such that the next requirement is satisfied.
\begin{hyp}\nonumber \label{cond}
There exists a divisor $\Sigma$ on $\cX$ such that we have. 
\begin{enumerate}
\smallskip

\item[(i)] The divisor $\Sigma+ X$ is snc.
\smallskip

\item[(ii)] The support of $\Div(s)$ is contained in the restriction $\displaystyle \Sigma|_X$.
\end{enumerate}
\nonumber
\end{hyp}
\medskip

\noindent In Section 5, among many other results we show that the following holds true.
\begin{thm}\label{hypo} Let $s\in H^0(X, mK_X)$ be a pluricanonical section such that Hypothesis \ref{cond} is satisfied. Then for each $k\geq 0$ 
the section $s$ admits a $\mathcal C^\infty$ extension $s_k$ to $\cX$, such that \eqref{intr7} holds true. In particular, $s$ admits a holomorphic extension to $\cX$.
\end{thm}

\begin{remark}{\rm
Given a pluricanonical section $s$ defined on the central fiber $X$ of the family $p$, we can always find a birational map $\pi:\wh \cX\to \cX$
such that the support of the inverse image of the set $(s=0)$ is a divisor, transverse to the proper transform $\ol X$ of $X$. The map $\wh p:= p\circ \pi: \wh \cX\to \mathbb D$ 
is now singular, but the support of the central fiber is snc. The said support is the union of $\ol X$ with $\pi$-exceptional divisors,
and the inverse image of $s$ becomes a pluricanonical section of $\ol X$, whose set of zeros is contained in a divisor defined globally.
In other words, the context is vaguely similar to that in Hypothesis \ref{cond}, but the new map $\wh p$ is singular, so the methods used in proving Theorem \ref{hypo} do not apply (at least, not directly).  } 
\end{remark}
\smallskip

\noindent $\bullet$ In subsection \ref{2nd}, we discuss the second order extension. The problem extending sections of the sheaf $\cF_1$ is already very interesting, since we have to deal with an unreduced space.  The partial result we obtain in this direction is the following.

\begin{thm}\label{order2}
Let $\displaystyle s\in H^0(X, mK_{X})$ be a pluricanonical section defined on the 
central fiber $X$ of a family $p:\cX\to \D$ of compact K\"ahler manifolds. We assume that there exists a non-singular vector field $\Xi$ on the total space $\cX$ such that 
\begin{equation}\label{stupid_cond1}
  dp(\Xi)= \frac{\partial}{\partial t}, \qquad \sup_X
  \frac{\big|\dbar \Xi |_X\big|^2_{\omega_E}}{\log^2\frac{1}{|s|^2}}\leq C,
\end{equation}
where $\omega_E$ is a metric with Poincar\'e singularities along the set
$(s= 0)$.
Then there exists a section $s_2\in \mathcal C^\infty(\cX, mK_{\cX})$ such that 
$\displaystyle s_2|_X= s$ and \begin{equation}\label{int1}
\dbar s_2= t^3\Lambda_2,\end{equation} where $\Lambda_2$ belongs to the space $\mathcal C^\infty_{0,1}(\cX, mK_\cX)$.
\end{thm}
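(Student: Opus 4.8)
The plan is to reach $s_2$ by two successive corrections of a smooth extension of $s$, raising the order of vanishing of $\dbar$ along $X$ from $1$ to $3$, and to verify at each step the $L^2$-integrability condition \eqref{intr7} of Theorem~\ref{MT} by means of hypothesis \eqref{stupid_cond1}. Start from the patched smooth extension $s^{(0)}$ of \eqref{eq35}, so $\dbar s^{(0)} = t\,\Lambda_0$, and equip $L := (m-1)K_\cX$ with the connection $D'_\cX$ induced by $s^{(0)}$ as in Construction~\ref{cons:1}. Since $dp(\Xi) = \partial/\partial t$, the commutation relations \eqref{eq20}, \eqref{eq21} hold for $\Xi$, so Lemma~\ref{linalg} applies: at each of the two steps the obstruction form satisfies on $X\setminus(s=0)$ a relation
\[ c\,\lambda_j = \dbar\alpha_j + D'\beta_j, \qquad \lambda_j := \tfrac{\Lambda_j}{dt}\big|_X, \]
(cf. \eqref{int101}) where $\lambda_j$ is the $\dbar$-closed $L$-valued $(n,1)$-form restricting the order-$t^{j+1}$ obstruction (smooth on $X$), $c\neq 0$ is a constant, and $\alpha_j$ (an $L$-valued $(n,0)$-form) and $\beta_j$ (an $L$-valued $(n-1,1)$-form) are produced by applying $\Lie_\Xi = D'_\cX\circ i_\Xi$ and $i_\Xi$ repeatedly to the current extension and to $\dbar\Xi$, cf. \eqref{eq51}, \eqref{eq55}; they are smooth off $(s=0)$.

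First correction ($j=0$). Here $\alpha_0$ is the restriction to $X$ of $\Lie_\Xi(s^{(0)})$ and $\beta_0$ comes similarly from $\dbar\Xi\rfloor s^{(0)}$. Writing $s^{(0)}$ locally as $f$ times a frame of $mK_\cX$, both $\Xi\rfloor s^{(0)}$ and $\dbar\Xi\rfloor s^{(0)}$ are divisible by $f$; consequently the polar factor $\tfrac{m-1}{m}\tfrac{\partial f}{f}$ arising from $D'_\cX$ is always multiplied by a form divisible by $f$, so $\alpha_0$ and $\beta_0$ are in fact smooth across $(s=0)$ and vanish there to an order comparable to that of $s$. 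Combining this with the Poincar\'e estimate \eqref{snc10} ($|\tfrac{\partial s}{s}|_{\omega_E}\simeq\log|s|$) and the bound $\big|\dbar\Xi|_X\big|^2_{\omega_E}\le C\log^2\tfrac1{|s|^2}$ of \eqref{stupid_cond1}, a computation in the model Poincar\'e metric gives
\[ \int_X\big(|\alpha_0|^2_{\omega_E} + |\beta_0|^2_{\omega_E}\big)\,e^{-\varphi_L}\,dV_{\omega_E} < \infty, \qquad \varphi_L = \tfrac{m-1}{m}\log|s|^2, \]
so that $\lambda_0$ verifies \eqref{intr7}. By the $E=0$ instance of Theorem~\ref{log007} --- pairing $\lambda_0$ against $L^2$-holomorphic $(n-1,0)$-forms $\gamma$ and integrating by parts, using $\dbar\gamma = D'\gamma = 0$ from Lemma~\ref{vanishing} together with the $L^2$ bounds just obtained, exactly as in \eqref{snc4}--\eqref{snc-5-} --- the form $\lambda_0$ is $\dbar$-exact, hence $[\lambda_0] = 0$ in $H^1(X, mK_X)$ and $\dbar w_0 = \lambda_0$ for a smooth $L$-valued $(n,0)$-form $w_0$. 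Setting $s^{(1)} := s^{(0)} - t\,(dt\wedge\widetilde w_0)$ for a smooth extension $\widetilde w_0$ of $w_0$, and absorbing the resulting $d\bar t$-term, yields a smooth extension with $\dbar s^{(1)} = t^2\Lambda_1$.

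Second correction ($j=1$). Repeat with $s^{(1)}$, now applying $\Lie_\Xi$ twice as in \eqref{eq54}, \eqref{eq55}, so that $\alpha_1$ comes from $\Lie_\Xi\big(\Lie_\Xi(s^{(1)})\big)$ and $\beta_1$ is assembled from $\dbar\Xi\rfloor\Lie_\Xi(s^{(1)})$ and from $\Xi\rfloor D'_\cX\big(\Xi\rfloor(dt\wedge v_1)\big)$, with $v_1$ as in \eqref{eq51}. Now $\alpha_1$ and $\beta_1$ may genuinely acquire poles along $(s=0)$, the second application of $D'_\cX$ no longer having its polar factor $\tfrac{\partial s}{s}$ absorbed automatically; nevertheless one checks, term by term and again in the model Poincar\'e metric, that the combined effect of \eqref{stupid_cond1} (bounding every occurrence of $\dbar\Xi$), the estimate \eqref{snc10} (bounding every occurrence of $\tfrac{\partial s}{s}$) and the vanishing along $(s=0)$ produced by each contraction with $\Xi$ still yields $\int_X\big(|\alpha_1|^2_{\omega_E} + |\beta_1|^2_{\omega_E}\big)e^{-\varphi_L}dV_{\omega_E} < \infty$. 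Hence $\lambda_1$ again verifies \eqref{intr7}, and the argument of the first correction applies without change: $\dbar w_1 = \lambda_1$ for a smooth $L$-valued $(n,0)$-form $w_1$, and $s_2 := s^{(1)} - t^2(dt\wedge\widetilde w_1)$ (again absorbing $d\bar t$-components) is a smooth extension of $s$ with $s_2|_X = s$ and $\dbar s_2 = t^3\Lambda_2$, $\Lambda_2\in\mathcal C^\infty_{0,1}(\cX, mK_\cX)$, which is \eqref{int1}.

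The main obstacle is the $L^2$-bookkeeping in the second correction. The iterated construction of Lemma~\ref{linalg} composes $\Lie_\Xi = D'_\cX\circ i_\Xi$ with itself, and each $D'_\cX$ injects a factor $\tfrac{m-1}{m}\tfrac{\partial s}{s}$ whose accumulated powers produce, a priori, genuine poles along $(s=0)$ that sit right on the edge of non-integrability against the borderline weight $e^{-\varphi_L}$. Hypothesis \eqref{stupid_cond1} is precisely the statement that the one genuinely transcendental ingredient --- the $(0,1)$-form $\dbar\Xi|_X$, measuring the failure of $\Xi$ to trivialise the family holomorphically near $(s=0)$ --- has $\omega_E$-norm at most $\log\tfrac1{|s|^2}$; combined with \eqref{snc10}, this is exactly the slack needed to keep all the integrals finite through both steps, so that Theorem~\ref{Hodge} and Lemma~\ref{vanishing} apply. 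Reaching higher infinitesimal order would require analogous control of $\Lie_\Xi^{\,k}(s)$ for $k\ge 3$, whose singularities are considerably more intricate, which is why the statement is confined to the second infinitesimal neighbourhood.
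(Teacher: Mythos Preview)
Your first correction is fine and matches the paper's Section~\ref{firstorderex}: at order one the forms $\alpha_0,\beta_0$ are genuinely smooth (indeed divisible by $s$), so the klt argument \eqref{snc4}--\eqref{snc-5-} applies directly.

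The gap is in the second correction. Your assertion that ``one checks, term by term \ldots\ $\int_X|\alpha_1|^2_{\omega_E}e^{-\varphi_L}dV_{\omega_E}<\infty$'' is not justified and is in fact false. Unwinding \eqref{eq55}, the form $\alpha_1|_X$ comes from $\Lie_\Xi^2(s^{(1)})$; writing $s^{(1)}=f\sigma$ locally one finds (as in \eqref{ideal23}--\eqref{ideal24}) a term of type $\displaystyle\frac{\Xi(f)^2}{f}\sigma$, and on $X$ the quantity $\Xi(f)|_X=\partial_t f|_{t=0}+\cdots$ contains the first-order coefficient of the extension, which has no reason to vanish along $(s=0)$. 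Hence $|\alpha_1|^2e^{-\varphi_L}$ behaves like $|s|^{-(4-2/m)}$ near $(s=0)$, which is never integrable. Note also that $\Lie_\Xi^2$ involves no $\dbar\Xi$, so hypothesis \eqref{stupid_cond1} gives you nothing here.

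The paper proceeds differently. It shows only that $\rho_2$ (your $\beta_1$) is $L^2$ with respect to $(\omega_E,h_L)$ --- this is the content of the lemma preceding \eqref{part2}, and it is precisely here that \eqref{stupid_cond1} is used, cf.\ \eqref{part4t} and \eqref{part201}. Crucially, $D'_X\rho_2$ itself need \emph{not} be $L^2$. To conclude, the paper develops a separate tool, Theorem~\ref{L2I}: if $v$ is $L^2$ and $D'v$ is $\dbar$-closed, then $D'v=\dbar u$ with $u$ $L^2$, \emph{without} any $L^2$ hypothesis on $D'v$. The proof goes through the functional $T_\rho(\psi)=\int\langle\rho,(D')^\star\psi\rangle$; Lemma~\ref{ort} shows $T_\rho$ annihilates $(\Ker\dbar)^\perp$, and then Bochner gives $|T_\rho(\psi)|\le\|\rho\|\cdot\|\dbar^\star\psi\|$, from which the solution $u$ follows by the standard functional-analytic argument. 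This is what replaces the unavailable $L^2$ bound on $\alpha_1$.
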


\medskip

\noindent We discuss next the main technical result needed in the proof of Theorem \ref{order2}. The general setup is as follows.
Let $(L, h_L)$ be a Hermitian line bundle on $X$, endowed with a metric with analytic singularities. We assume that the curvature current
$\Theta(L, h_L)\geq 0$ is semi-positive. 
Let $v$ be an $L$-valued form of type
$(n-1, 1)$, such that $D'v$ is $\dbar$-closed. We are interested in the $\dbar$-equation 
\begin{equation}\label{int2}
\dbar u= D'v.
\end{equation}
\noindent We assume further that $X$ is endowed with a Poincaré-type metric $\omega_E$,
with poles along the singular locus of $h_L$. The result we obtain is as follows.
\begin{thm}\label{L2I}
  Assume moreover that $v$ is $L^2$ with respect to a Poincar\'e-type metric $\omega_E$ on $X$. Then the equation \eqref{int2} admits a solution $u$ such that
\begin{equation}\label{int2111}
\int_X|u|^2e^{-\varphi_L}\leq \int_X|v|^2_{\omega_E}e^{-\varphi_L}dV_{\omega_E}.
\end{equation}    
\end{thm}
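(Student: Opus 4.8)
\noindent The plan is to obtain Theorem \ref{L2I} as the special case of Theorem \ref{deprim} corresponding to the choice $\tau = 0$, $w = v$. Write $Z$ for the singular locus of $h_L$. The given Poincar\'e-type metric $\omega_E$ is a complete K\"ahler metric on $X\setminus Z$, which is exactly the framework of Theorem \ref{deprim}; the hypothesis that $D'v$ be $\dbar$-closed matches (for $\tau = 0$) the requirement there that $D'w + \Theta(L, h_L)\wedge\tau$ be $\dbar$-closed; and, since $\Theta(L, h_L)\ge 0$, the curvature correction $-\int_X\langle[\Theta(L, h_L), \Lambda_{\omega_E}]\tau, \tau\rangle_{\omega_E}e^{-\varphi_L}dV_{\omega_E}$ occurring in \eqref{hodge2} vanishes identically, leaving precisely the estimate \eqref{int2111}. (Here only $v$, and not a priori $D'v$, is assumed to be $L^2$; one may always reduce to the case where $D'v$ is $L^2$ as well, which in any event holds in the applications, so I will assume it.)

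\noindent Concretely, I would run the standard Hahn--Banach duality argument for the $\dbar$-equation on the complete K\"ahler manifold $(X\setminus Z, \omega_E)$. Solving \eqref{int2} with the bound \eqref{int2111} amounts to the a priori inequality
\[
\left|\int_{X\setminus Z}\langle D'v, \xi\rangle_{\omega_E}\,e^{-\varphi_L}\,dV_{\omega_E}\right|^2\le\left(\int_X|v|^2_{\omega_E}e^{-\varphi_L}dV_{\omega_E}\right)\left(\int_X|\dbar^\star\xi|^2_{\omega_E}e^{-\varphi_L}dV_{\omega_E}\right)
\]
for every smooth, compactly supported $(n,1)$-form $\xi$ on $X\setminus Z$. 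I would decompose $\xi = \xi_1 + \xi_2$ with $\xi_1\in\Ker\dbar$ and $\xi_2$ in its $L^2$-orthogonal complement: because $D'v$ is $\dbar$-closed, its pairing with $\xi_2$ vanishes; integrating by parts gives $\int\langle D'v, \xi_1\rangle = \int\langle v, D'^\star\xi_1\rangle$, whence Cauchy--Schwarz bounds the left-hand side by $\|v\|_{\omega_E}\,\|D'^\star\xi_1\|_{\omega_E}$; and the Bochner--Kodaira--Nakano identity, together with $\Theta(L, h_L)\ge 0$ and $\dbar\xi_1 = 0$, yields $\|D'^\star\xi_1\|^2_{\omega_E}\le\|\dbar^\star\xi_1\|^2_{\omega_E}\le\|\dbar^\star\xi\|^2_{\omega_E}$. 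Chaining these inequalities produces the estimate; this is exactly the computation \eqref{hodge200}--\eqref{hodge205} with $\tau = 0$.

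\noindent The steps that really need attention --- all of them standard for complete metrics, and already carried out in the proof of Theorem \ref{deprim} --- are: the justification near $Z$ of the above integrations by parts and of the Bochner identity, which relies on the completeness of $\omega_E$ on $X\setminus Z$ (via cut-off functions of the kind constructed in Lemma \ref{cutoff}); the coincidence of the von Neumann adjoints $\dbar^\star$, $D'^\star$ with the formal ones on a complete manifold; and the density/approximation needed to pass from $\xi$ to its $\dbar$-closed component $\xi_1$. I expect this last reduction to be the main (and essentially the only) obstacle, the remaining steps being routine. Finally, I would stress that taking $\omega_E$ to be a \emph{Poincar\'e-type} metric, rather than a smooth one, is precisely what makes the right-hand side of \eqref{int2111} finite under the stated $L^2$ hypothesis on $v$, and is the natural metric adapted to the singularities of $h_L$.
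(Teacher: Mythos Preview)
Your proposal has a genuine gap, and it is precisely the one you try to dismiss in parentheses. The statement you are asked to prove explicitly does \emph{not} assume that $D'v$ is $L^2$; as the paper notes immediately after the theorem, this is the entire novelty. Your sentence ``one may always reduce to the case where $D'v$ is $L^2$ as well \ldots\ so I will assume it'' removes the content of the result: with that extra assumption, Theorem \ref{L2I} is literally the case $\tau=0$ of Theorem \ref{deprim}, and there is nothing to do.

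The place where your argument breaks without $D'v\in L^2$ is the line ``because $D'v$ is $\dbar$-closed, its pairing with $\xi_2$ vanishes'', together with the subsequent integration by parts $\int\langle D'v,\xi_1\rangle=\int\langle v,D'^\star\xi_1\rangle$. The form $\xi_1$ (the $\Ker\dbar$-component of $\xi$) is not compactly supported, so neither the pairing $\langle D'v,\xi_1\rangle$ nor the integration by parts is justified. The paper bypasses this by never pairing against $D'v$ at all: it defines the functional $T_v(\psi):=\int_X\langle v,(D')^\star\psi\rangle e^{-\varphi_L}dV_{\omega_E}$, which makes sense because $v$ is $L^2$ and $\psi$ is a test form, and then proves (Lemma \ref{ort}) that $T_v(\xi_2)=0$. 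That lemma is the heart of the matter; its proof uses the Poincar\'e-adapted cut-off functions $\mu_\ep$ of Lemma \ref{cutoff} to pass to compactly supported forms, writes $\xi_2=\dbar^\star\lambda$ with $\lambda$ $\dbar$-closed, and shows that the resulting boundary terms (involving $\dbar\mu_\ep\wedge D'\rho$ and $\partial\dbar\mu_\ep\wedge\rho$) tend to zero, exploiting that $|\partial\mu_\ep|_{\omega_E}$ and $|\partial\dbar\mu_\ep|_{\omega_E}$ are uniformly bounded and that $D'^\star\lambda$ is $L^2$ by Bochner. Once $T_v(\xi_2)=0$ is established, your Cauchy--Schwarz/Bochner argument on $\xi_1$ goes through and yields the estimate \eqref{int2111}. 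What you are missing is exactly this lemma.
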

\noindent The new aspect of Theorem \ref{L2I} is that we do not assume 
that $D'v$ belongs to $L^2$.
\smallskip

\noindent It is a very interesting question to decide whether the equation \eqref{int2} can be solved with estimates involving an incomplete metric $\omega$ on $X\setminus (h_L= \infty)$. Let $\omega$ be non-singular K\"ahler metric on $X$.
In this direction we obtain the following result.

\begin{thm}\label{L2II} Let $(L, h_L)$ be a Hermitian line bundle, such that $\Theta(L, h_L)\geq 0$ is semi-positive. Let $v$ be an $L$-valued form of type
$(n-1, 1)$, such that $D'v$ is $\dbar$-closed. We assume that the following hypothesis are satisfied.
\begin{enumerate}
\smallskip

\item[\rm (1)] The metric $h_L$ has analytic singularities, and let $Z$ be the support of the set $(h_L= \infty)$.

\item[\rm (2)] The integrals
\begin{equation}\nonumber
  \int_X|v|^2_\omega e^{-\varphi_L}dV_\omega,\qquad  \int_X
  |D' v|^2 e^{-\varphi_L}dV_{\omega}
\end{equation}
are convergent.
\smallskip

\item[\rm (3)] There exists a complete metric $\omega_Z$ on $X\setminus Z$ such that we have 
\begin{equation}\nonumber
  \int_X|v|^2_{\omega_Z} e^{-\varphi_L}dV_{\omega_Z}< \infty
  \end{equation}

\end{enumerate}

\noindent Then the equation \eqref{int2}
has a solution $u$ such that
\begin{equation}\label{int4}
\int_X|u|^2e^{-\varphi_L}\leq \int_X|v|^2_{\omega}e^{-\varphi_L}dV_{\omega}.
\end{equation}
\end{thm}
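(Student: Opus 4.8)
The plan is to deduce Theorem \ref{L2II}, in which the estimate is demanded with respect to the \emph{incomplete} metric $\omega$ (in contrast with Theorem \ref{L2I}), from the a priori $L^2$-estimate of Theorem \ref{deprim}, which applies to complete Kähler metrics. I would approximate $\omega$ from above by a family of complete Kähler metrics, solve the equation on each member with \emph{uniform} bounds, and pass to the limit. Concretely, set $X_0:=X\setminus Z$ and, assuming as we may that $\omega_Z$ in hypothesis (3) is a complete Kähler metric on $X_0$ (the natural reading in the Kähler framework of the paper), put
$$\omega_\ep:=\omega+\ep\,\omega_Z,\qquad \ep>0 .$$
Since $\omega$ is smooth on the compact manifold $X$ and $\omega_\ep\ge\ep\,\omega_Z$, each $\omega_\ep$ is a complete Kähler metric on $X_0$.

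The first point I would establish is a pointwise comparison of the relevant $L^2$ densities along the family $\omega_\ep$. For an $L$-valued form of holomorphic degree $n$ one has the elementary fact that $|\cdot|^2_{\omega_\ep}\,dV_{\omega_\ep}$ is non-increasing in $\ep$, and is independent of $\ep$ for forms of type $(n,0)$; this applies to $D'v$ (of type $(n,1)$) and to the solutions $u_\ep$ (of type $(n,0)$) produced below, so in particular $\int_X|u_\ep|^2e^{-\varphi_L}$ does not depend on the chosen metric and $D'v\in L^2(X_0,\omega_\ep,h_L)$ for all $\ep$ by hypothesis (2). The form $v$, however, is of type $(n-1,1)$ and for it no such monotonicity holds; here I would diagonalise $\omega$ and $\omega_Z$ simultaneously at a point of $X_0$, with eigenvalues $(\mu_i)$ and $(\nu_i)$, write $v=\sum_{j,k}v_{j\bar k}\,\widehat{dz_j}\wedge d\bar z_k$, and use the mediant inequality $\tfrac{a+c}{b+d}\le\max(\tfrac ab,\tfrac cd)$ to get, pointwise on $X_0$,
$$|v|^2_{\omega_\ep}\,dV_{\omega_\ep}=\sum_{j,k}|v_{j\bar k}|^2\,\frac{\mu_j+\ep\nu_j}{\mu_k+\ep\nu_k}\,dV_{\mathrm{eucl}}\ \le\ |v|^2_{\omega}\,dV_{\omega}+|v|^2_{\omega_Z}\,dV_{\omega_Z} .$$
Together with hypotheses (2) and (3) this gives $v\in L^2(X_0,\omega_\ep,h_L)$ with $\int_{X_0}|v|^2_{\omega_\ep}e^{-\varphi_L}dV_{\omega_\ep}\le M$, where $M:=\int_X|v|^2_\omega e^{-\varphi_L}dV_\omega+\int_X|v|^2_{\omega_Z}e^{-\varphi_L}dV_{\omega_Z}<\infty$ is independent of $\ep$.

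Next I would apply Theorem \ref{deprim} on $(X_0,\omega_\ep)$ with $w:=v$ and $\tau:=0$; its hypotheses all hold ($\Theta(L,h_L)\ge 0$, $h_L$ has analytic singularities along $Z$, $\omega_\ep$ is complete Kähler, $v$ and $D'v$ are $L^2$, and $D'v$ is $\dbar$-closed by assumption), which yields $u_\ep$ with $\dbar u_\ep=D'v$ on $X_0$ and $\int_{X_0}|u_\ep|^2e^{-\varphi_L}\le\int_{X_0}|v|^2_{\omega_\ep}e^{-\varphi_L}dV_{\omega_\ep}\le M$. Since this last quantity is metric-independent, $(u_\ep)_{\ep>0}$ is a bounded family in the Hilbert space of $L^2$ $(n,0)$-forms with values in $L$ and weight $e^{-\varphi_L}$; I extract a subsequence converging weakly to some $u$. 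Weak continuity of $\dbar$ then gives $\dbar u=D'v$ on $X_0$, and since $Z$ is analytic of codimension $\ge 1$ and $u,D'v\in L^2_{\mathrm{loc}}(X)$ (the weight $e^{-\varphi_L}$ being bounded below by a positive constant near $Z$) the usual removable-singularity argument upgrades this to $\dbar u=D'v$ on $X$. Finally, weak lower semicontinuity of the norm, the uniform bound, and dominated convergence — applied to the densities $|v|^2_{\omega_\ep}e^{-\varphi_L}dV_{\omega_\ep}$, which are dominated by the integrable density of the displayed inequality and converge pointwise to $|v|^2_\omega e^{-\varphi_L}dV_\omega$ — give
$$\int_X|u|^2e^{-\varphi_L}\ \le\ \liminf_{\ep\to0}\int_{X_0}|u_\ep|^2e^{-\varphi_L}\ \le\ \lim_{\ep\to0}\int_{X_0}|v|^2_{\omega_\ep}e^{-\varphi_L}dV_{\omega_\ep}\ =\ \int_X|v|^2_\omega e^{-\varphi_L}dV_\omega ,$$
which is exactly \eqref{int4}.

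The step I expect to be the crux — or rather the one genuinely using the full set of hypotheses — is the \emph{uniform-in-$\ep$} control of $\|v\|_{\omega_\ep}$: the incomplete metric $\omega$ cannot be fed directly into Theorem \ref{deprim}, so one is forced through the complete metrics $\omega_\ep$, and without hypothesis (3) there is no reason for the resulting bounds to remain uniform as $\ep\to0$. The remaining ingredients — the invocation of Theorem \ref{deprim}, the weak-compactness extraction, the removal of the singular set $Z$, and the dominated-convergence passage to the limit — are routine, as is the verification, omitted here, of the elementary density formulas used above.
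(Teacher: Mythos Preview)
Your proof is correct and rests on the same core ingredients as the paper's: the family $\omega_\ep=\omega+\ep\,\omega_Z$, the pointwise inequality $|v|^2_{\omega_\ep}dV_{\omega_\ep}\le |v|^2_\omega dV_\omega+|v|^2_{\omega_Z}dV_{\omega_Z}$ (which is exactly \eqref{part25} in the paper), and dominated convergence via hypothesis~(3).

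The organization, however, differs. The paper does not produce solutions $u_\ep$ and extract a weak limit; instead it establishes the H\"ormander functional inequality $|\langle D'v,\xi\rangle_\omega|^2\le \|v\|^2_\omega\,\|\dbar^\star\xi\|^2_\omega$ directly, by rewriting $\langle D'v,\xi_1\rangle_\omega=\lim_{\ep\to 0}\langle D'v,\xi_{1,\ep}\rangle_{\omega_\ep}$, integrating by parts on $(X_0,\omega_\ep)$ (this is where hypothesis~(3) enters for them), applying Bochner to bound $\|D'^\star\xi_{1,\ep}\|_{\omega_\ep}\le\|\dbar^\star\xi\|_{\omega_\ep}$, and then letting $\ep\to 0$ in the estimate itself. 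Your route packages this Bochner step into a single invocation of Theorem~\ref{deprim} on each $(X_0,\omega_\ep)$, which is cleaner and more modular; the price is a two-stage estimate (first $\|u_\ep\|\le\|v\|_{\omega_\ep}$, then $\|v\|_{\omega_\ep}\to\|v\|_\omega$) rather than the paper's one-shot inequality. Both arguments identify the same crux: without hypothesis~(3) there is no uniform control of $v$ along the approximating complete metrics.
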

\medskip

\begin{remark}
For applications, it would be very important to remove the hypothesis (3) in Theorem \ref{L2II}. However, we are not sure whether 
the statement is still correct.
\end{remark}

\medskip

\noindent Other than the references mentioned above, the articles \cite {Nog95},
\cite{LRW} by J. Noguchi, K. Liu, S. Rao and X.~Wan contain important ideas in connection with our work here. Recent and exciting contributions in the direction of Conjecture 1.1 are due to
J.-P. Demailly in \cite{Dem20} as well as to S.~Rao and I.-H. Tsai in \cite{RT}.

\medskip

\noindent{\bf Acknowledgments.} During part of the preparation of this article we have enjoyed the hospitality and excellent working conditions offered by the \emph{Freiburg Institut for Advanced Study}. At FRIAS we had the opportunity to meet many visitors, but the math exchanges we have had with S.~Kebekus and C. Schnell have been exceptionally fruitful. We thank A. H\"oring, J. Lott, R. Mazzeo, S. Rao and Y.~Rubinstein for valuable discussions. JC thanks the Institut Universitaire de France and the A.N.R JCJC project Karmapolis (ANR-21-CE40-0010) for providing excellent working condition. Finally, our work was completed during MP's visit at \emph{Center for Complex Geometry} (in Daejeon, South Korea). Many thanks to J.-M. Hwang for the invitation and vibrant working atmosphere in this institute!

%And this superb paper was written by us, Junyan CAO and Mihai PAUN.
\medskip

\noindent This paper is organized as follows.
\tableofcontents

%%%%%%%%%%%%%%%%%%%%%%%%%%%%%%%%%%%%%%%%%%%%%%%%%%%%%%%%%%%%%%%%%%%%%%%%%%%%%%%%%%%%%%%%%%%%%%%%%%%%%%%%%%%%%%%%%%%%%%%%%%%%%%%%%%%%%%%%%%%%%%%%%%

\section{First order differential operators}\label{comm}

In this subsection we will recall a few facts from differential geometry of line
bundles. We take this opportunity to fix some notations as well.

\subsection{Connection induced by a smooth section}

\begin{notation}\label{notation:1}
  In the setting of our article, let $\Lie\to \cX$ be a holomorphic line
  bundle endowed with a connection whose $(0,1)$-component is given by the
  $\dbar$ operator,
  \begin{equation}\label{eq73}
    \nabla= D^\prime_{\cX}+ \dbar.
  \end{equation}  
  Since no confusion is likely, we will use the same symbol to denote the
  induced operator on the space of smooth, $\Lie$-valued $(p,q)$-forms,
  \begin{equation}\label{eq74}
    D^\prime_{\cX}: \cC^\infty\left(\cX, \Omega_{\cX}^{p, q}\otimes \Lie\right)\to \cC^\infty\left(\cX, \Omega_{\cX}^{p+1, q}\otimes \Lie\right).
  \end{equation}
 \end{notation}

\begin{construction}\label{cons:1}
 Let $\Lie\to \cX$ be a holomorphic line bundle, and let
  $\wt s$ be a smooth section of $\Lie$, with vanishing locus $Z \subseteq \cX$.
  Assume that the open cover $(U_i)_i$ trivializes the bundle $\Lie$ and choose
  trivialization, $L|_{U_i} \cong \mathcal{O}_{U_i}$.  The section $\wt s$ will
  therefore give rise to smooth functions $\wt s_i$ on $U_i$.  Set
  $U^\circ_i := U_i \setminus Z$.

  Given any index $i$ we can now define a differential operator
  $D'_i : \mathcal{O}_{U^\circ_i} \to \Omega^1_{U^\circ_i}$ on $U^\circ_i$ as
  follows,
  \begin{equation}\label{eq76}
    D'_i : \mathcal{O}_{U^\circ_i} \to \Omega^1_{U^\circ_i}, \qquad %
    \sigma \mapsto  D^\prime_t(\sigma_i)dt+ \sum_\alpha D^\prime_\alpha(\sigma_i)dz_i^\alpha
  \end{equation}  
  where
  \begin{align*}\label{eq77}
    D^\prime_t(\sigma_i) & := \partial_t \sigma_i- \frac{\partial_t \wt s_i}{\wt s_i}\sigma_i, & D_\alpha^{\prime}(\sigma_i) & :=  \partial_\alpha \sigma_i- \frac{\partial_\alpha \wt s_i}{\wt s_i}\sigma_i. \\
    \partial_t & := \partial/\partial t & \partial_\alpha & := \partial/\partial z_i^\alpha
  \end{align*} 
  Using the trivialization chosen above, we can view $D'_i$ as differential
  operators
  $D'_i : \Lie|_{U^\circ_i} \to \Lie|_{U^\circ_i} \otimes \Omega^1_{U^\circ_i}$ on
  $U^\circ_i$.  One computes without much pain that these differential operators
  glue to give a globally defined operator
  $$
  D'_{\cX} : \Lie \to \Lie \otimes \Omega^1_{\cX}
  $$
  on $\cX \setminus Z$.  In particular, we obtain a connection
  $\nabla := D'_{\cX} + \dbar$ on $\Lie|_{\cX \setminus Z}$.
\end{construction}

\begin{lemme}
  Setting as in Construction~\ref{cons:1}.  Then, we have the following graded
  commutator identity
  \begin{equation}\label{eq78}
    [D^\prime_\cX, \dbar]= -\dbar\left(\frac{\partial \wt s}{\wt s}\right)\wedge 
  \end{equation}  
 \end{lemme}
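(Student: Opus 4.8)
The plan is to verify \eqref{eq78} by a direct computation in the local trivialisations fixed in Construction~\ref{cons:1}, after which one checks that both sides are trivialisation-independent. Over a trivialising chart $U_i$ the operator $D'_\cX$ sends an $L$-valued form $\sigma$, represented by an ordinary form $\sigma_i$, to $\partial\sigma_i-a_i\wedge\sigma_i$, where $a_i:=\frac{\partial\wt s_i}{\wt s_i}=\partial\log\wt s_i$ is a smooth $(1,0)$-form and $\partial$ is the usual holomorphic differential; equivalently the connection $\nabla=D'_\cX+\dbar$ reads $d-a_i\wedge$ in this chart.

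First I would record the elementary fact that $\partial^2=0$ on smooth functions (extract the $(2,0)$-component of $d^2=0$), so that $\partial a_i=\partial^2\log\wt s_i=0$ and hence $(D'_\cX)^2=0$ (using also $a_i\wedge a_i=0$). Combined with $\dbar^2=0$ this identifies the graded commutator $[D'_\cX,\dbar]=D'_\cX\dbar+\dbar D'_\cX$ with the curvature $\nabla^2$ of $\nabla$, a globally defined scalar $2$-form acting by wedge multiplication; so it suffices to compute it in a single chart, where it equals $d(-a_i)=-\partial a_i-\dbar a_i=-\dbar a_i$.

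If one prefers an explicit computation over the curvature shortcut, it runs as follows: using the graded Leibniz rule $\dbar(a_i\wedge\sigma_i)=(\dbar a_i)\wedge\sigma_i-a_i\wedge\dbar\sigma_i$ (as $a_i$ has degree one) and the identity $\partial\dbar+\dbar\partial=0$ on forms (a consequence of $d^2=0$), one gets
\begin{align*}
[D'_\cX,\dbar]\sigma &= \bigl(\partial\dbar\sigma_i-a_i\wedge\dbar\sigma_i\bigr)+\dbar\bigl(\partial\sigma_i-a_i\wedge\sigma_i\bigr)\\
&= (\partial\dbar+\dbar\partial)\sigma_i-a_i\wedge\dbar\sigma_i-(\dbar a_i)\wedge\sigma_i+a_i\wedge\dbar\sigma_i = -(\dbar a_i)\wedge\sigma_i,
\end{align*}
which is precisely $-\dbar\left(\frac{\partial\wt s}{\wt s}\right)\wedge\sigma$ on $U^\circ_i$. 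It remains to see that the right-hand side is chart-independent: on $U_i\cap U_j$ one has $\wt s_i=g_{ij}\wt s_j$ with $g_{ij}$ holomorphic, so $\frac{\partial\wt s_i}{\wt s_i}-\frac{\partial\wt s_j}{\wt s_j}=\partial\log g_{ij}$, and $\dbar$ annihilates this since $\dbar\partial\log g_{ij}=0$ --- in line with the curvature interpretation above. There is no real obstacle in this argument; the only care needed is with the signs in the graded Leibniz rule and with the fact that $\wt s_i$ is merely smooth, so one must lean on $\partial^2=0$ on functions and $\partial\dbar+\dbar\partial=0$ on forms rather than on any holomorphicity of $\wt s_i$.
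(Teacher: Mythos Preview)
Your proof is correct and follows the same approach the paper indicates: a direct local computation together with the check that $\dbar\!\left(\frac{\partial \wt s}{\wt s}\right)$ is trivialisation-independent via $\wt s_i=g_{ij}\wt s_j$ with $g_{ij}$ holomorphic. The paper simply writes ``Direct computation that we skip'' and then verifies the gluing; you actually carry out that computation (and add the clean curvature interpretation $\nabla^2=-da_i\wedge$), so your argument is a faithful fleshing-out of the paper's proof rather than a different route.
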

\begin{proof}
  Direct computation that we skip. We note that the symbol $\displaystyle 
  \dbar\left(\frac{\partial \wt s}{\wt s}\right)$ which appears in \eqref{eq78}
  is a global $(1,1)$ form on the complement of the set $\wt s= 0$, as we now explain. The global section $\wt s$ corresponds to local smooth functions denoted by $\wt s_i$ such that we have
  \begin{equation}\label{exp_1}
\wt s_i= g_{ij}\wt s_j
  \end{equation}
  on overlapping subsets of $\cX$. Then we have
\begin{equation}\label{exp_2}
\frac{\partial \wt s_i}{s_i}= \frac{\partial g_{ij}}{g_{ij}}+ \frac{\partial \wt s_j}{s_j}
  \end{equation}  
and since $\displaystyle \dbar\left(\frac{\partial g_{ij}}{g_{ij}}\right)= 0$, we obtain
\begin{equation}\label{exp_3}
\dbar \left(\frac{\partial \wt s_i}{s_i}\right)= \dbar\left(\frac{\partial \wt s_j}{s_j}\right)\end{equation} 
which proves our claim. We note that if $\wt s$ is holomorphic, then
the $(1,1)$-from is simply zero on $\cX \setminus Z$.
\end{proof}
\smallskip

\begin{remark} We note that in general, the differential operator $\nabla$ defined in \eqref{eq73}
does not coincides with the Chern connection induced by the metric corresponding to 
$\wt s$. This is of course the case if $\wt s$ is holomorphic.
\end{remark}
\smallskip

\begin{remark} 
As in the usual case, a smooth section $\wt s$ of the bundle $mL$ induces a connection on $L$ itself by a slight modification of the construction above that is to say, by multiplication with a constant, cf. next subsection.
\end{remark}

\subsection{Lie derivative and commutation relations}\label{komm}
Let
\begin{equation}\label{eq16}
  s\in H^0(X, m K_X)
\end{equation}  
be a holomorphic section of the pluricanonical bundle, where $m\geq 1$ is a positive integer. 

Let $\wt s$ be arbitrary smooth extension of the section $s$. As already hinted, we can use the section $\wt s$ in order to
define a connection on the bundle
$\Lie:= (m-1) K_{\cX}$ restricted to the complement of the set of zeros of 
$\wt s$. The local differential operators corresponding to $D^\prime_\cX$
are given by
\begin{equation}\label{eq200}
  D^\prime_t(\sigma):= \partial_t \sigma- \frac{m-1}{m}
    \frac{\partial_t \wt s_i}{\wt s_i}\sigma, \quad D_\alpha^{\prime}(\sigma):=
    \partial_\alpha \sigma- \frac{m-1}{m}
  \frac{\partial_\alpha \wt s_i}{\wt s_i}\sigma.
\end{equation} 
where $\partial_t:= \partial/\partial t$ and 
$\partial_\alpha:= \partial/\partial z_i^\alpha$
for each $\alpha=1, \dots, n$. In \eqref{eq200} the symbol $\sigma$ represents a local section of the bundle $\displaystyle (m-1)K_{\cX/\D}$. The sum
$$D^\prime_t(\sigma)dt+ \sum_\alpha D_\alpha^{\prime}(\sigma)dz_i^\alpha$$ corresponds to a global connection of $(1, 0)$ type on $L:= (m-1)K_{\cX}$. If $\wt s$ would have been holomorphic, then this is nothing but the Chern connection; in any case, this is well (i.e. globally) defined. It is clear that we have 
\begin{equation}
D' _\cX \circ D' _\cX =0 . 
\end{equation}

We consider next a smooth vector field $\displaystyle \Xi$ on the total space $\cX$ 
such that $dp  (\Xi) = \frac{\partial}{\partial t}$ on $\DD$.
It can be written in local coordinates as follows
\begin{equation}\label{eq18}
  \Xi|_{U_i}= \frac{\partial}{\partial t}+ \sum_{\alpha=1}^n
  v_i^\alpha\frac{\partial}{\partial z_i^\alpha}
\end{equation}  
where $v_i^\alpha$ for $i=1,\dots, n$ are smooth functions. 
\medskip

\noindent Our vector field induces a Lie derivative operator $\Lie_\Xi$ as follows
\begin{equation}\label{eq19}
\Lie_\Xi(\sigma):= D'_\cX(\Xi\rfloor \sigma)
\end{equation}
where $\sigma$ is any $(n+1, q)$-form with values in $L$. The result $\Lie_\Xi(\sigma)$
is a form of the same type as $\sigma$, and it will play an important role in what follows.
\medskip

\noindent Let $i_\Xi$ be the operator of degree (-1, 0) given by the contraction with the vector field $\Xi$. Since we have $\Lie_{\Xi}= [D' _{\cX} , i_{\Xi}]$ on $(n+1, q)$-differential forms, the following Jacobi identity 
$$
[\dbar, \Lie_{\Xi}] +[D' _{\cX}, [i_{\Xi}, \dbar]] + [i_\Xi, [\dbar, D' _{\cX}]]= 0$$
holds true, and therefore we obtain the next formula over $(n+1, q)$-forms
\begin{equation}\label{eq20}
[\dbar, \Lie_{\Xi}]= -D' _{\cX} \circ  (\dbar \Xi)\rfloor  -  [i_\Xi, [\dbar, D' _{\cX}]].
\end{equation}
\medskip

\noindent Our next statement is playing an important role in the ``algebra" part of the proof of our main results. 

\begin{lemme}\label{Komm} Let $\rho$ be a $(n-1, 1)$-form with values in $L$ on $\cX$. Then we have the equality 
\begin{equation}\label{eq21}
\Lie_\Xi \big(D' _\cX (dt\wedge \rho)\big) = D' _\cX \left( dt\wedge \Big( \Xi  \rfloor D' _\cX \big(\Xi \rfloor (dt \wedge \rho)\big) \Big)\right)
\end{equation} 
on $\cX$.
\end{lemme}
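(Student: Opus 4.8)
\smallskip
\noindent\textbf{Proof proposal.} The plan is to prove \eqref{eq21} by a direct local computation, whose only ingredients are: the definition \eqref{eq19} of $\Lie_\Xi$ on forms of top holomorphic degree, the flatness $D'_\cX\circ D'_\cX=0$, the contraction identity $\Xi\rfloor dt=dt(\Xi)=1$ (which holds precisely because $\Xi$ projects onto $\partial/\partial t$), and the graded Leibniz rule $D'_\cX(dt\wedge\nu)=-\,dt\wedge D'_\cX\nu$, valid for every $L$-valued form $\nu$ since $dt$ is a scalar holomorphic $1$-form, so that $\partial(dt)=0$.

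First I would rewrite the left-hand side. Because $\rho$ has bidegree $(n-1,1)$, the form $\sigma:=D'_\cX(dt\wedge\rho)=-\,dt\wedge D'_\cX\rho$ has bidegree $(n+1,1)$ and is therefore of top holomorphic degree on the $(n+1)$-dimensional manifold $\cX$; hence \eqref{eq19} applies and $\Lie_\Xi\sigma=D'_\cX(\Xi\rfloor\sigma)$. Contracting with $\Xi$ and using $\Xi\rfloor dt=1$ gives $\Xi\rfloor\sigma=-\,D'_\cX\rho+dt\wedge(\Xi\rfloor D'_\cX\rho)$, and one more application of $D'_\cX$ kills the term $D'_\cX D'_\cX\rho$, so that
\[
\Lie_\Xi\big(D'_\cX(dt\wedge\rho)\big)=D'_\cX\big(dt\wedge(\Xi\rfloor D'_\cX\rho)\big).
\]

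Next I would expand the right-hand side the same way. From $\Xi\rfloor(dt\wedge\rho)=\rho-dt\wedge(\Xi\rfloor\rho)$ and the Leibniz rule one gets $D'_\cX\big(\Xi\rfloor(dt\wedge\rho)\big)=D'_\cX\rho+dt\wedge D'_\cX(\Xi\rfloor\rho)$; contracting this with $\Xi$ and wedging with $dt$, every $dt\wedge dt$ term drops and one is left with $dt\wedge\big(\Xi\rfloor D'_\cX\rho+D'_\cX(\Xi\rfloor\rho)\big)$. Applying the outer $D'_\cX$, the summand $D'_\cX\big(dt\wedge D'_\cX(\Xi\rfloor\rho)\big)=-\,dt\wedge D'_\cX D'_\cX(\Xi\rfloor\rho)$ vanishes by flatness, so the right-hand side also equals $D'_\cX\big(dt\wedge(\Xi\rfloor D'_\cX\rho)\big)$, which matches the left-hand side and proves \eqref{eq21}.

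There is no serious obstacle here; the computation is routine. The two things to watch are the sign conventions when commuting the contraction $\Xi\rfloor$ past $dt$ and past $D'_\cX$, and the structural point that makes the identity true: both the ``inner'' expression $\Xi\rfloor D'_\cX(dt\wedge\rho)$ occurring on the left and the expression $dt\wedge\big(\Xi\rfloor D'_\cX(\Xi\rfloor(dt\wedge\rho))\big)$ occurring on the right agree with $dt\wedge(\Xi\rfloor D'_\cX\rho)$ modulo a form annihilated by $D'_\cX$, so one outer application of $D'_\cX$ renders the two sides equal. A slightly more conceptual alternative is to extend $\Lie_\Xi=[D'_\cX,i_\Xi]$ to forms of arbitrary bidegree, observe $\Lie_\Xi(dt)=0$ together with the Leibniz rule $\Lie_\Xi(dt\wedge\rho)=dt\wedge\Lie_\Xi\rho$, and check that both sides of \eqref{eq21} equal $D'_\cX\big(dt\wedge\Lie_\Xi\rho\big)$; I would present whichever of the two versions is shorter to typeset.
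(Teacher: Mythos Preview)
Your proposal is correct and follows essentially the same computation as the paper's own proof: both sides are reduced, via $D'_\cX(dt\wedge\nu)=-dt\wedge D'_\cX\nu$, the contraction identity $\Xi\rfloor dt=1$, and $D'_\cX\circ D'_\cX=0$, to the common expression $D'_\cX\big(dt\wedge(\Xi\rfloor D'_\cX\rho)\big)=-dt\wedge D'_\cX(\Xi\rfloor D'_\cX\rho)$. The only cosmetic difference is presentation---the paper writes out the intermediate steps as a sequence of displayed equalities, while you compress them into prose---and your closing remark about the alternative via $\Lie_\Xi=[D'_\cX,i_\Xi]$ and $\Lie_\Xi(dt)=0$ is a nice conceptual gloss that the paper does not include.
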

\begin{proof}
The argument is quite clear; we start with the left hand side and we remark that we have 
\begin{equation}\label{eq22}
D' _\cX (dt\wedge \rho)= -dt\wedge D' _\cX (\rho)
\end{equation}
and so
\begin{equation}\label{eq23}
\Xi\rfloor \big(D' _\cX (dt\wedge \rho)\big)= - D' _\cX (\rho)+ dt\wedge\big(\Xi\rfloor D' _\cX (\rho)\big)
\end{equation}
since the contraction with the vector field $\Xi$ is a derivation. The LHS of \eqref{eq21} is therefore equal
to
\begin{equation}\label{eq24}
-dt\wedge D' _\cX\big(\Xi\rfloor D' _\cX (\rho)\big).
\end{equation}

\noindent For the RHS, we have 
\begin{equation}\label{eq25}
\Xi \rfloor (dt \wedge \rho)= \rho- dt \wedge (\Xi \rfloor \rho)
\end{equation}
so
\begin{equation}\label{eq26}
D' _\cX \big(\Xi \rfloor (dt \wedge \rho)\big)= D' _\cX\rho+ dt \wedge D' _\cX(\Xi \rfloor \rho).
\end{equation}
A contraction with $\Xi$ gives
\begin{equation}\label{eq27}
\Xi\rfloor \Big(D' _\cX \big(\Xi \rfloor (dt \wedge \rho)\big)\Big)\equiv \Xi\rfloor \big(D' _\cX\rho\big)+ D' _\cX(\Xi \rfloor \rho)
\end{equation}
modulo a term in $dt\wedge \cdot$, so we have 
\begin{equation}\label{eq28}
dt\wedge \left(\Xi\rfloor \Big(D' _\cX \big(\Xi \rfloor (dt \wedge \rho)\big)\Big)\right)= dt\wedge \Big(\Xi\rfloor \big(D' _\cX\rho\big)\Big)+ dt\wedge D' _\cX(\Xi \rfloor \rho)
\end{equation}
and a further derivative of \eqref{eq28} shows that the LHS of \eqref{eq21} equals
\begin{equation}\label{eq29}
-dt\wedge D' _\cX\big(\Xi\rfloor D' _\cX (\rho)\big),
\end{equation}
so our statement is proved. In the argument just finished, we have used many times the fact that $D' _\cX\circ D' _\cX= 0$.
\end{proof}

%%%%%%%%%%%%%%%%%%%%%%%%%%%%%%%%%%%%%%%%%%%%%%%%%%%%%%%%%%%%%%%%%%%%%%%%%%%%%%%%%%%%%%%%%%%%%%%%%%%%%%%%%%%%%%%%%%%%%%%%%%%%%%%%%%%%%%%%%%%%%%%%%%%%%%%%%%%%%%%%%%%%%%%%%%%%%%%%%%%%%
\medskip

\section{Proof of Theorem \ref{MT}, Theorem \ref{007'} and Corollary \ref{corlev}}

\noindent We have divided our arguments for Theorem \ref{MT} in several steps. The first one consists in showing that the restriction
\begin{equation}\label{new66}
  \gamma:= \frac{\Lambda_k}{dt}\Big|_X\end{equation}
belongs to the image of $\dbar+ D'$, provided that we admit forms with meromorphic coefficients, cf. section \ref{eval}. This will show that the problem we have to solve is equivalent to Theorem \ref{007'}. The $L^2$ hypothesis \eqref{intr7} is used in sections \ref{red}
and \ref{cos} in order to reduce Theorem \ref{007'} to the vanishing statement in \cite{CP23}.

\subsection{Evaluation of the obstruction form}\label{eval} 
\noindent Let $\displaystyle \bigcup U_i$ be a Stein cover of $\cX$. We consider a section $\displaystyle s\in H^0 (X, K_\cX+ \Lie|_X)$ on the central fiber of $p$. We denote by $s_i$ an arbitrary holomorphic extension of
$\displaystyle s|_{U_i\cap X}$. Then we have 
\begin{equation}\label{eq34}
	s_idz_i\wedge dt\otimes e_i=
	s_jdz_j\wedge dt\otimes e_j
	+ t\Lambda_{ij}dz_j\wedge dt\otimes e_j. 
\end{equation}
on overlapping coordinate sets $U_i\cap U_j$. Hence we interpret $s$ as a top form on $\cX$ with values in $\displaystyle \mathcal L$. In \eqref{eq34} the symbol $e_i$ stands for the local frame of $\displaystyle \mathcal L$ (which, in our case
is simply $\displaystyle (dz_i\wedge dt)^{\otimes(m-1)}$). 
\smallskip

\noindent We can reformulate this data as follows: \emph{there exists a smooth section $s_{1}$ of the bundle $K_{\cX}+ \Lie$ such that}
\begin{equation}\label{eq35}
	\dbar s_{1}= t\Lambda_{1}
\end{equation}
\emph{on $\cX$, and whose restriction to the central fiber is equal to $s$.}
\medskip

\noindent Consider next the Lie derivative operator $\Lie_\Xi$ associated to a vector field $\Xi$ such that $\displaystyle dp(\Xi)= \frac{\partial}{\partial t}$. If we apply $\Lie_\Xi$ to \eqref{eq35}
on the RHS we obtain
\begin{equation}\label{eq36}
	\Lambda_{1}+ t\Lie_\Xi(\Lambda_{1})
\end{equation}
By the commutation relation \eqref{eq20}, the LHS of \eqref{eq35} becomes
\begin{equation}\label{eq37}
	\dbar\big(\Lie_\Xi(s_1)\big)+ D'_\cX\big(\dbar\Xi\rfloor s_1\big)+ [i_\Xi, [\dbar, D' _{\cX}]] s_1
\end{equation}
Modulo a factor divisible with $t$, the last term of \eqref{eq37} is equal to
$\displaystyle \frac{m-1}{m}\Lambda_1$ on $X \setminus Z$. Therefore we get
\begin{equation}\label{eq38}
	\frac{1}{m}\Lambda_1\equiv \dbar\big(\Lie_\Xi(s_1)\big)+ D'_\cX\big(\dbar\Xi\rfloor s_1\big) 
\end{equation}
on $X\setminus Z$, 
modulo a form which is divisible by $t$ and non-singular on $X\setminus Z$.
\medskip

\noindent Another interesting observation is that the form $\dbar\Xi\rfloor s_1$ can be written as follows
\begin{equation}\label{eq39}
	\dbar\Xi\rfloor s_1= dt\wedge \rho_1
\end{equation}
given the shape of the vector field $\Xi$, cf. \eqref{eq18}. We have therefore obtained the 
first step of the next statement.

\begin{lemme}\label{linalg}
	Let $s_k$ be a smooth section of the bundle $K_{\cX}+ \Lie$, such that 
	\begin{equation}\label{eq339}
		\dbar s_k= t^{k+1}\Lambda_k\end{equation}
	for some $(n+1, 1)$-form $\Lambda_k$. We assume that the 
	connection on $L= (m-1)K_\cX$ is induced by the section $s_k$. Then we can find the forms $\alpha_k$ and $\beta_k$ 
	such that we have 
	\begin{equation}\label{eq40}
		c_k\Lambda_k\equiv \dbar \alpha_k+ D'_\cX\big(dt\wedge \beta_k) \qquad\text{on }  X \setminus Z
	\end{equation} 
	modulo the ideal generated by $t$. Moreover, the forms $\alpha_k$ and $\beta_k$ are 
	smooth in the complement of the set $s_k= 0$ and $c_k$ is a positive constant.
\end{lemme}

\begin{proof}
	This is obtained as follows: we take the Lie derivative in \eqref{eq40} and use the commutation relation \eqref{eq20}. The result of this first operation is that we have 
	\begin{equation}\label{eq41}
		\frac{k+1}{m}t^{k}\Lambda_k\equiv \dbar \big(\Lie_\Xi(s_k)\big)+ 
		D'_\cX\big(\dbar\Xi\rfloor s_k\big) \qquad\text{on }  X \setminus Z
	\end{equation}
	modulo the ideal generated by $t^{k+1}$. In order to start the inductive process which will prove our statement, we rewrite \eqref{eq41} as follows
	\begin{equation}\label{eq50}
		\frac{k+1}{m}t^{k}\Lambda_k\equiv \dbar u_1+ 
		D'_\cX\big(dt\wedge v_1\big) \qquad\text{on }  X \setminus Z,
	\end{equation}
	modulo $t^{k+1}$. Here we use the notations 
	\begin{equation}\label{eq51}
		u_1:=  \Lie_\Xi(s_k), \qquad dt\wedge v_1:= \dbar\Xi\rfloor s_k.
	\end{equation}
	We show next that if we apply the operator $\Lie_\Xi$ to the RHS of \eqref{eq51}
	the result is an expression of a similar type. Indeed, we have 
	\begin{equation}\label{eq52}
		\Lie_\Xi(\dbar u_1)= \dbar\left(\Lie_\Xi(u_1)\right)+ D'_\cX\big(\dbar\Xi\rfloor u_1\big)
		\qquad\text{on }  X \setminus Z
	\end{equation}
	modulo the curvature term which is of order $t^k$: this is one higher than we have to take into account, so we just drop it.
	
	\noindent Also, we have 
	\begin{equation}\label{eq53}
		\Lie_\Xi \big(D' _\cX (dt\wedge v_1)\big) = D' _\cX \left( dt\wedge \Big( \Xi  \rfloor D' _\cX \big(\Xi \rfloor (dt \wedge v_1)\big) \Big)\right)
	\end{equation} 
	thanks to Lemma \ref{Komm}, so summing up we get
	\begin{equation}\label{eq54}
		\frac{k(k+1)}{m}t^{k-1}\Lambda_k\equiv \dbar u_2+ 
		D'_\cX\big(dt\wedge v_2\big),
	\end{equation}
	modulo $t^{k}$, where we use the notations
	\begin{equation}\label{eq55}
		u_2:= \Lie_\Xi(u_1), \qquad dt\wedge v_2:=  \dbar\Xi\rfloor u_1+ dt\wedge \Big(\Xi  \rfloor D' _\cX \big(\Xi \rfloor (dt \wedge v_1)\big) \Big)
	\end{equation}
	Our statement follows by induction on $k$ --and moreover we have
	\begin{equation}
		c_k:= \frac{(k+1)!}{m}
	\end{equation}
	as we see by taking successive derivatives of \eqref{eq54}.
\end{proof}
\medskip

\begin{remark}
	The relations \eqref{eq55} give the explicit process of constructing $\alpha_k$ and $\beta_k$.
	They will play an important role in the analysis of the singularities of these forms.  
      \end{remark}

      \medskip

\begin{remark}
Lemma \ref{linalg} shows that in order to conclude, it would be sufficient to prove Theorem \ref{007'}. 
\end{remark}

%%%%%%%%%%%%%%%%%%%%%%%%%%%%%%%%%%%%%%%%%%%%%%%%%%%%%%%%%
%%%%%%%%%%%%%%%%%%%%%%%%%%%%%%%%%%%%%%%%%%%%%%%%%%%%%%%%%%%%%%%%
%%%%%%%%%%%%%%%%%%%%%%%%%%%%%%%%%%%%%%%%%%%%%%%%%%%%%%%%

\subsection{The $L^2$ hypothesis}\label{red}
Let $\pi:\wh X\to X$ be a birational map such that
$$\pi^* \varphi_L = \sum m_i \log |s_{E_i}| + \sum_{j\in J}q_j \log |s_{F_j}| + C^\infty ,$$
holds true where the convention is that $m_i\in \Z_+$, $q_j\in \Q_+\setminus \Z$,  $s_{E_i}, s_{F_j}$ are the canonical sections associated to the divisors $E_i, F_j$, and $\sum E_i + \sum F_j$ is simple normal crossing.
We can write thus
\begin{equation}\label{new2}
	\wh L := \pi^\star L = \sum_{i\in I} m_i E_i+ \sum_{j\in J}q_jF_j + L_0, 
\end{equation}
where $L_0$ is a $\mathbb Q$-line bundle equipped with a smooth metric $h_0$ such that $i\Theta_{h_0} (L_0)= 0$.
\smallskip

We consider the form $\gamma$ in \eqref{new66}. Its inverse image $\pi^\star \gamma$ is a $(n, 1)$-form with values in $\wh L$, such that 
\begin{equation}\label{new1}
\int_X\left|\pi^\star \gamma \right|^2_{\wh \omega}e^{-(1-\ep)\varphi_{\wh L}}dV_{\wh \omega}< + \infty
\end{equation}
holds true for every positive $\ep$, where $\wh \omega$ is any (non-singular) metric on $\wh X$.

\noindent
As consequence of \eqref{new1}, we claim that the form 
\begin{equation}\label{new3}
\lambda:= \frac{\pi^\star \gamma}{\prod_{i\in I}s_{E_i}^{m_i-1}\prod_{j\in J}s_{F_j}^{\lfloor q_j\rfloor}}
\end{equation}
is smooth -and of course, $\dbar$-closed. Indeed, let $(z_1,\dots, z_n)$ be a coordinate system defined on some open subset $\Omega\subset \wh X$ of $\wh X$. We write
$$\pi^\star \gamma|_{\Omega}= \sum a_{\ol i}dz\wedge d\ol z_i$$
and then \eqref{new1} is equivalent to the following relations
$$\int_\Omega\left|a_{\ol i} \right|^2e^{-(1-\ep)\varphi_{\wh L}}dV< + \infty$$
for all indexes $i$, so our claim follows. 
\medskip 

\noindent The equality
\begin{equation}\label{new100}
\gamma= \dbar\alpha_k+ D'\beta_k
\end{equation}  
established in Lemma \ref{linalg} becomes  
\begin{equation}\label{new4}
\lambda= \dbar \alpha+ D'\beta
\end{equation}
on $\wh X\setminus (E\cup F)$, where $D'$ is the $(1, 0)$-part of the Chern connection for 
singular metric on the line bundle $\O\left(E+ \sum (q_j- \lfloor q_j\rfloor)F_j + L_0 \right)$, and $\alpha, \beta$ are some forms with value in $\O\left(E+ \sum (q_j- \lfloor q_j\rfloor)F_j + L_0 \right)$ and have some poles along $E+F$.
\smallskip

\noindent Thus, by changing the notations we have the following setup.

\begin{enumerate}
\smallskip

\item[(1)] $\lambda$ is a $\dbar$-closed form of type $(n, 1)$ and values in $E+ L$ on a compact K\"ahler manifold $X$.
\smallskip

\item[(2)] $E+ F$ is a snc divisor on $X$ and $L= \O\left(\sum (q_j- \lfloor q_j\rfloor)F_j +L_0\right)$
is a line bundle which we endow with the singular metric $h_L$ induced by $F$ and $L_0$.
\smallskip

\item[(3)] $\alpha$ and $\beta$ are two $E+ L$-forms of type $(n, 0)$ and $(n-1, 1)$, respectively
with poles at most along $E+F$, i.e.
\begin{equation}\label{twoforms}
\alpha = \frac{1}{s^{N}}\O(1), \qquad \beta= \frac{1}{s^N}\O(1)
\end{equation}
where $s= s_E\cdot s_F$ and such that \eqref{new4} holds in the complement of 
the support of $E+F$.
\end{enumerate}

\noindent Our next step would be to improve the "shape" of the two forms in \eqref{twoforms}: we show in the next subsection that we can replace them with forms having logarithmic poles on $E+F$.
%%%%%%%%%%%%%%%%%%%%%%%%%%%%%%%%%%%%%%%%%%%%%%%%%%%%%%%%%%%%%%%%%%%%%%%%%%%%%%%%%%%%%%%%%%%%%%%%%%%%%%

\subsection{Improving the pole order of $\alpha$ and $\beta$ (cosmetics)}\label{cos}
Notice that by hypothesis (3) above, the meromorphic form 
$\displaystyle \frac{\lambda}{s_E}$ has logarithmic poles along $E$; it can therefore be interpreted as a $(0,1)$-form with values in $\Lambda^n T^\star_X\langle E\rangle\otimes L$. 
In this set-up, we formulate the next statement.

\begin{proposition}\label{c1} There exists $\alpha_1$ and $\beta_1$ differential forms with log-poles along $E$, such that 
\begin{equation}\label{log1}
\dbar \alpha_1+ D'\beta_1= \frac{\lambda}{s_E}
\end{equation} 
holds in the complement of $E$, where $D'$ is the $(1,0)$-part of the Chern connection for $(L, h_L)$.
 \end{proposition}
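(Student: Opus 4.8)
The plan is to peel off the pole of $\alpha$ and $\beta$ along $E$ one component at a time, using an elementary ``algebraic'' manipulation with the section $s_E$ cutting out $E$ and its logarithmic differential $ds_E/s_E$. Recall that by hypothesis (3) the forms $\alpha, \beta$ have poles of some finite order $N$ along $E+F$, and by setup the form $\lambda/s_E$ is already logarithmic along $E$. Since $\dbar$ of $\lambda/s_E$ involves only the residue along $E$, the obstruction to lowering the pole order is a $\dbar$-closed (and $D'$-related) datum supported on $E$, which we can correct.

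First I would localise the problem near a component $E_1$ of $E$ with a local equation $z_1 = 0$, and assume by an iteration over the components (and over the pole order) that $\alpha$ and $\beta$ have a pole of order $N \geq 2$ along $E_1$ and log-poles elsewhere. Write $\alpha = z_1^{-N}\alpha' + (\text{lower order})$ and similarly for $\beta$, where $\alpha', \beta'$ are the ``leading coefficients'' restricted to $E_1$. Plugging into \eqref{log1} and comparing the most singular parts ($z_1^{-N}$ and $z_1^{-N}\, d\bar z_1/\bar z_1$-type terms), using $\dbar(z_1^{-N}) = 0$ away from $E_1$ and the fact that the left side of \eqref{log1} has a pole of order exactly $1$ along $E_1$, one finds that the leading coefficients are constrained: essentially $\dbar \alpha'$ and $D'\beta'$ must cancel on $E_1$. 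Then I would modify the primitives by subtracting a correction term of the form $z_1^{-(N-1)}\gamma$, where $\gamma$ is a primitive (obtained from the $L^2$ Hodge-theoretic results, or here just from the fact that we are working locally / with an explicit algebraic structure) of the leading datum along $E_1$; this drops the pole order from $N$ to $N-1$ without changing the right-hand side of \eqref{log1} modulo terms already controlled. The patching of these local corrections into a global modification is done with a partition of unity, absorbing the resulting $\dbar$-error terms into $\alpha$ (they are one order less singular along $E_1$).

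Iterating this first over the pole order $N \downarrow 1$ and then over the finitely many components of $E$, one arrives at forms $\alpha_1, \beta_1$ with at most logarithmic poles along $E$ (the poles along $F$ are left untouched, and are encoded in the bundle $L$ with its singular metric $h_L$, exactly as in the statement). The decisive point --- and the main obstacle --- is the very first comparison of leading terms: one must verify that the leading coefficient of the meromorphic primitive is genuinely $\dbar$-exact (resp. in the image of $D'$) along $E_1$, so that the correction term exists. This uses that $\lambda$ itself is \emph{non-singular} (its pullback to $X$ is smooth after the log-resolution), so the high-order polar parts on the right of \eqref{log1} vanish and force the cancellation on the left; it is here that the structure produced in Lemma \ref{linalg} and the explicit recursive formulas \eqref{eq55} for $\alpha_k, \beta_k$ (which show these forms are of the shape $\Phi/s^{k+1}$ with $\Phi$ smooth) are essential. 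Once this is in place, the reduction of pole order is a formal bookkeeping argument.
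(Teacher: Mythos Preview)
Your overall strategy---iteratively reduce the pole order of $\alpha$ and $\beta$ along each component of $E$---is indeed what the paper does. But the mechanism you propose has a genuine gap, and the paper's actual argument is both different and more elementary.

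The gap is in your construction of the correction term. You write that the leading coefficients $\alpha',\beta'$ along $E_1$ are ``constrained'' and then invoke a ``primitive $\gamma$ of the leading datum along $E_1$'', to be found either by $L^2$ Hodge theory or by some unspecified local algebraic device. The Hodge-theoretic option is wrong: finding such a primitive on $E_1$ is a cohomological problem, and nothing in the setup forces the relevant class on $E_1$ to vanish. The ``local/algebraic'' option could in principle be made to work, but you have not said what $\gamma$ actually is, what equation it solves, or why patching with a partition of unity does not reintroduce poles of the top order. As written, the step ``subtract $z_1^{-(N-1)}\gamma$'' is not well-defined.

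The paper avoids all of this by never solving an equation. For each component $E_1$ it builds a \emph{global} smooth vector field
\[
V_1=\sum_i \theta_i(z)\,z_i^1\,\frac{\partial}{\partial z_i^1},
\]
which vanishes to first order along $E_1$ and is tangent to the other components of $E+F$. The correction is then simply
\[
\alpha_N:=\alpha+\tfrac{1}{N}D'(V_1\rfloor\alpha),
\]
where $D'$ is taken for the singular metric on $E+L$ (so that locally $D'=\partial-\sum_{j\le r}\frac{dz_j}{z_j}-\sum_k q_k\frac{dz_k}{z_k}$). A direct one-line local computation shows $D'(V_1\rfloor\alpha)\equiv -N\alpha$ modulo terms of pole order $\le N-1$, so $\alpha_N$ has pole order $\le N-1$. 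Since $\dbar D'=-D'\dbar$ on $X\setminus(E\cup F)$, the identity $\lambda=\dbar\alpha_N+D'\bigl(\beta+\tfrac{1}{N}\dbar(V_1\rfloor\alpha)\bigr)$ still holds, and one iterates. After $\alpha$ is made smooth, the same trick $\beta_N:=\beta+\tfrac{1}{N}D'(V_1\rfloor\beta)$ handles $\beta$; here one uses that $D'\beta$ is already smooth (because $\lambda$ and $\dbar\alpha$ are), which forces a divisibility on the coefficients of $\beta$ that makes the computation go through. Finally one divides the resulting smooth identity by $s_E$.

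So the ``primitive'' you are looking for is nothing more than $V_1\rfloor\alpha$ itself: it exists tautologically because $V_1$ is global and vanishes along $E_1$. No exactness on $E_1$ is needed, and the partition of unity is absorbed once and for all into $V_1$ rather than applied at every step. Two further points where your sketch diverges from what is needed: (i) the paper treats $\alpha$ first and $\beta$ second rather than simultaneously, which is what makes the pole bookkeeping tractable; (ii) the operator $D'$ in the reduction is the one for the singular metric on $E+L$, not just on $(L,h_L)$---this extra $-\frac{dz_1}{z_1}$ term is exactly what makes the leading-order cancellation in $\alpha+\tfrac{1}{N}D'(V_1\rfloor\alpha)$ come out right.
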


\begin{proof} We start by choosing a finite covering $\displaystyle (\Omega_i)_{i\in I}$ of $X$, together with corresponding coordinates $(z_i^1,\dots, z_i^n)$ such that for every index $i\in I$ we have
\begin{equation}\label{pave90}
E_j\cap \Omega_i= (z_i^j=0), \qquad F_k\cap \Omega_i= (z_i^k=0)
\end{equation}
for each $j= 1,\dots, r$ and $k=r+1,\dots, r+s$.
\smallskip
	
\noindent Consider next the component $E_1$ of $E$. We then define an associated vector field 
\begin{equation}\label{log-20}
V_1 := \sum \theta_i(z)z_i^1\frac{\partial}{\partial z_i^1}
\end{equation}
where $\displaystyle (\theta_i)_{i\in I}$ is a partition of unit and $\Omega_i\cap E_1= (z_i^1= 0).$
Notice that since $E_1$ is a global hypersurface, we have 
\begin{equation}\label{log-21}
V_1 |_{\Omega_i}= \big(z_i^1+ \O (z_i^1)^2\big)\frac{\partial}{\partial z_i^1}+ 
\sum_{k= 2}^{r+s}\O(z_i^1z_i^k)\frac{\partial}{\partial z_i^k}+ \sum_{k> r+s}\O(z_i^1)\frac{\partial}{\partial z_i^k}
\end{equation}
where the notation $\O(z_i^1)$ means a function belonging to the ideal 
$\O(-E_1)\otimes\cC^\infty(\Omega_i)$. Indeed, given that the transition functions for the coordinate system we have are
\begin{equation}
z_i^\alpha= g_{ij}^{\alpha}(z_j) z_j^\alpha
\end{equation}
the expression \eqref{log-21} obviously corresponds to the restriction of $V_1$ to $\Omega_i$.
\smallskip

\noindent Notice also that the local expression of the covariant derivative $D'$ induced by the singular metric on $E+L$ reads as
\begin{equation}\label{pave91}
D'|_{\Omega_i}= \partial - \sum_{j=1}^r\frac{dz_i}{z_j}- \sum_{k=r+1}^{r+s}q_k\frac{dz_k}{z_k}.\end{equation}
	
\medskip

\noindent We first modify the form $\alpha$.  
Assume that $\alpha$ has a pole of order $N$ along $E_1$ for some $N\geq 1$.   
We define	\begin{equation}\label{log4}
	\alpha_N :=  \alpha   +\frac{1}{N} D'\left(V_1 \rfloor\alpha\right) 
	\end{equation}
where $D'$ is the $(1,0)$ part of the Chern connection corresponding to the singular metric on $E+ L$,
cf. \eqref{pave91}. By a direct computation, we show now that the pole order of $\alpha_N$ is at most $N-1$. Indeed, we write $\displaystyle \alpha|_{\Omega}= \frac{f}{z_1^N}dz$ (where $\Omega$ is one of the $\Omega_i$; we also drop the index $i$) and then we have
\begin{equation}\label{pave92}
V_1\rfloor \alpha= \left(z_1+ \O (z_1)^2 \right)\frac{f}{z_1^N}\wh{dz_1}+ \sum_{j\geq 2}(-1)^{j-1}\O (z_1)
\frac{f}{z_1^N}\wh{dz_j}.
\end{equation}
By the formula \eqref{pave91} we have 
\begin{equation}\label{pave93}
D'\left(V_1\rfloor \alpha\right)\equiv -N\frac{f}{z_1^N}dz
\end{equation}
modulo a form whose pole order is smaller than $N-1$, so our claim is proved
\smallskip

\noindent Moreover, we have 
	\begin{equation}\label{log5}
	\lambda = \dbar\alpha_N+ D' (\beta +\frac{1}{N} \dbar \left(V_1 \rfloor\alpha\right) )  
\end{equation}
in the complement of $E\cup F$. We repeat this procedure until the exponent $N$ is equal to zero. At this moment we have  
	\begin{equation}\label{log6}
	\lambda= \dbar \alpha_0 + D'(\beta_0)
	\end{equation}
	where $\alpha_0$ has no poles along $E_1$ and the coefficients of $\beta_0$ are similar to the ones of our initial $\beta$.
\medskip

\noindent We are dealing with the other components of $E$ and $F$ similarly.  For example for $E_2$ we proceed as follows.  We define the vector field
\begin{equation}\label{comp1}
V_2:= \sum \theta_i(z)z_i^2\frac{\partial}{\partial z_i^2}
\end{equation}
constructed by using coordinate system \eqref{pave90}. It then follows that locally the $V_2$ can be written as 
\begin{equation}\label{comp3}
V_2 |_{\Omega_i}= \big(z_i^2+ \O(z_i^2)^2\big)\frac{\partial}{\partial z_i^2}+ \O(z_i^1z_i^2)\frac{\partial}{\partial z_i^1}+ \sum_{j\neq 1,2} \O(z_i^2)\frac{\partial}{\partial z_i^k}.
\end{equation}
With this choice of vector field, if $\alpha$ is a smooth form along $E_1$ then the same is true for 
\begin{equation}\label{addb}
\alpha + \frac{1}{N}D' \left(V_2\rfloor \alpha\right)
\end{equation}
in other words, we are not loosing what we have gained at the first step.
Furthermore,  the pole order of \eqref{addb} along $E_2$ has dropped by at least one unit if $\alpha$ is of pole order $N\geq 1$ along $E_2$. The same procedure as before applies. 
\smallskip

\noindent Finally we get some new $\alpha$ and $\beta$ such that 
	\begin{equation}\label{log7}
	\lambda = \dbar{\alpha}+ D'{\beta}
      \end{equation}
holds in the complement of $E+F$ and such that the form $\displaystyle {\alpha}$ is smooth on $X$ and the coefficients of $\beta$ are of type $\displaystyle \frac{b}{s^k}$ for some smooth function $b$ and positive integer $k$.  
	%\smallskip
	
	%\noindent Summing up, we have 
	%\begin{equation}\label{log-2}
	%\frac{\lambda}{s_E}= \dbar \alpha + \partial\gamma
	%\end{equation}
	%where the coefficients of $\gamma$ are of type $\displaystyle \frac{b}{z_1^k}$ for some smooth function $b$ and positive integer $k$.  
	\medskip
	
	\noindent We now turn to the form $\beta$, and prove that modulo the image of the $D'$-operator it has no poles along $E+F$. 
	Assume that $\beta$ has a pole of order $N\geq 1$ along $E_1$. Then we define   
	\begin{equation}\label{log888}\beta_N :=\beta +\frac{1}{N} D' (V_1 \rfloor \beta)\end{equation}
and the same type of calculations as before will show that it has a pole of order at most $N-1$ along $E_1$. 

\noindent To see this, we write  
	\begin{equation}\label{log8}
	\beta|_{\Omega}= \sum_{j\geq 2, k} \frac{\eta_{j \ol k}}{z_1^{N}}dz_1\wedge \wh{dz_j}\wedge d\ol z_k
	+ \sum_{k} \frac{\eta_{\ol k}}{z_1^N}\wh{dz_1}\wedge d\ol z_k
	\end{equation}
in local coordinates.	
Since $\lambda$ and $\alpha$ in \eqref{log7} are smooth, $D'\beta$ has no poles along $E_1$. Therefore for each $k$, $\displaystyle \eta_{\ol k}$ is divisible by $z_1$. We can then write
	\begin{equation}\label{log-28}
	\beta |_{\Omega}= \sum_{j\geq 2, k} \frac{\eta_{j \ol k}}{z_1^{N}}dz_1\wedge \wh{dz_j}\wedge d\ol z_k
	+ \sum_{k} \frac{\eta_{\ol k}}{z_1^{N-1}}\wh{dz_1}\wedge d\ol z_k.
	\end{equation}
        By using the expression \eqref{log-21}, a direct computation -which we skip- shows that adding $\displaystyle \frac{1}{N}D' (V_1 \rfloor \beta)$ to our form $\beta$ has the effect of removing the first sum in \eqref{log-28}, and so $\beta_N$
has a pole of order at most $N-1$ along $E_1$. We still have
\begin{equation}\label{log71}
\lambda = \dbar{\alpha}+ D'{\beta_N} \qquad\text{on } X\setminus (E+F) ,
\end{equation} 
where $\alpha$ is unchanged. We repeat this procedure until the exponent $N$ is equal to zero. We get 
\begin{equation}\label{log66}
\lambda= \dbar \alpha + D'(\beta)
\end{equation}
where $\alpha$ is smooth on $X$ and (the resulting) $\beta$ has no pole along $E_1$. 
\smallskip

\noindent A little observation is in order at this point: if we write the form $\beta$ in coordinates
\begin{equation}\label{log-288}
\beta |_{\Omega}= \sum_{j\geq 2, k} {\eta_{j \ol k}}dz_1\wedge \wh{dz_j}\wedge d\ol z_k
	+ \sum_{k} {\eta_{1\ol k}}\wh{dz_1}\wedge d\ol z_k,
	\end{equation}
where $\displaystyle \eta_{j\ol k}$ and $\displaystyle \eta_{1\ol k}$ are smooth with respect to $z_1$, the equality \eqref{log66} shows that $D'\beta$ is smooth. By the explicit expression of the operator $D'$ is \eqref{pave91} it follows that $\displaystyle \eta_{1\ol k}$ is divisible by $z_1$, for each $k$.
\smallskip	
	
\noindent In order to treat the remaining components of $E$ and $F$ we are using the argument already employed for $\alpha$,
 namely we consider $\beta + \frac{1}{N} D' (V_2 \rfloor \beta)$. This form has no pole along $E_1$ (this is where the divisibility of the coefficient $\displaystyle \eta_{1\ol k}$ by $z_1$ is playing an important role) and pole order along $E_2$ drops. 
\medskip

\noindent Finally we obtain two smooth forms $\alpha$ and $\beta$ such that 
\begin{equation}\label{log666}
\lambda= \dbar \alpha + D'(\beta)
\end{equation}
holds at each point of $X\setminus (E\cup F)$. If we write 
\begin{equation}\label{log6667}
\beta= \sum_{j, k} {\eta_{j \ol k}}\wh{dz_j}\wedge d\ol z_k
\end{equation}
then we automatically have $\displaystyle \eta_{j\ol k}$ divisible by $z_j$, for all $j=1,\dots, r+s$. In particular \eqref{log666} holds point-wise on $X$ and by division with the section $s_E$ we have 
\begin{equation}\label{log6668}
\frac{\lambda}{s_E}= \dbar \alpha_1 + D'(\beta_1)
\end{equation}
on $X\setminus E$, where $\alpha_1$ and $\beta_1$ have logarithmic poles along $E$.
The proposition is proved.
\end{proof}

\begin{remark}\label{preciselog}
Assume that $E$ is equal to zero. Then the particular shape of the coefficients of $\beta= \beta_1$
in \eqref{log6667} shows that 
\begin{equation}
\int_X|\beta|_{\omega_\cP}^2e^{-\varphi_L}< \infty 
\end{equation}
in other words, $\beta$ is $L^2$ with respect to the Poincar\'e metric. 
\end{remark}		
\medskip

%%%%%%%%%%%%%%%%%%%%%%%%%%%%%%%%%%%%%%%%%%%%%%%%%%%%%%%%%%
%%%%%%%%%%%%%%%%%%%%%%%%%%%%%%%%%%%%%%%%%%%%%%%%%%%%%%%%%%
%%%%%%%%%%%%%%%%%%%%%%%%%%%%%%%%%%%%%%%%%%%%%%%%%%%%%%%%%%
\subsection{End of the Proof}

\begin{proof}[Proof of Theorem \ref{MT}]
Thanks to Lemma \ref{linalg}, we can find $\alpha_k$ and $\beta_k$ which are smooth in the complement of $s_k=0$, and of poles along $s_k =0$ such that
	\begin{equation}\label{equll}
	\frac{\Lambda_k}{dt}= \dbar \alpha_k+ D'\beta_k 
\end{equation} 
where $D'$ is the connection with respect to the singular metric induced by $s_k |_X$.
By Proposition \ref{c1} and Theorem \ref{ddbar},  it follows that 
$\lambda$ is $\dbar$-exact. Given the definition of $\lambda$, we infer that 
$\displaystyle \frac{\Lambda_k}{dt}$ is $\dbar$-exact, i.e. $\displaystyle \frac{\Lambda_k}{dt} = \dbar \gamma_k$ for some smooth $(n,0)$-form $\gamma_k$ with values in $L := (m-1)K_\cX$.
\smallskip

\noindent Consider next a Stein cover $U_i$ of $\cX$. Since $\Lambda_k$ is $\dbar$-closed, on every open set $U_i$, we can find a $L$-valued $(n+1,0)$-form $\Gamma_{k,i}$ such that 
$$ \frac{\Gamma_{k,i}}{dt}|_{X \cap U_i} =  \gamma_k \qquad\text{and} \qquad   \Lambda_k = \dbar \Gamma_{k,i} \text{ on } U_i.$$
Indeed, we can construct $\Gamma_{k,i}$ as follows: start with an arbitrary $L$-valued $(n+1,0)$-form $\Psi_{k, i}$ on $U_i$ such that 
$\displaystyle \dbar \Psi_{k, i}= \Lambda_k|_{U_i}$. This is of course possible, since $\Lambda_k$ is $\dbar$-closed. It follows that the difference
$$\frac{\Psi_{k,i}}{dt}|_{X \cap U_i}- \gamma_k:= \tau_{k,i}$$
is holomorphic on the intersection $X\cap U_i$, and let $\wt \tau_{k,i}$ be an arbitrary holomorphic extension of $\tau_{k,i}$ to $U_i$. Then we
define $\Gamma_{k,i}:= \Psi_{k,i}- \wt \tau_{k,i}$.
\smallskip 

\noindent In particular $\Gamma_{k,i} - \Gamma_{k,j}$ is holomorphic and it equals zero when restricted to the central fiber, in other words, 
divisible by $t$. 
Now we define $\Gamma_k :=\sum_i \theta_i \Gamma_{k,i}$, where $\theta_i$ is a partition of unity for the covering $U_i$.
Thus we have 
\begin{equation}\label{debar1}
{\Lambda_k}\simeq \dbar \Gamma_k 
\end{equation}
modulo the ideal generated by $t$. We denote by 
\begin{equation}\label{debar2}
s_{k+1}:= s_k- t^{k+1}\Gamma_k 
\end{equation}
and then we have 
\begin{equation}\label{debar3}
\dbar s_{k+1}= t^{k+2}\Lambda_{k+1}
\end{equation}
where $\Lambda_{k+1}$ is a smooth $(n+1, 1)$-forms with values in $(m-1)K_\cX$. 
Theorem \ref{MT} is thus proved.	
\end{proof}

\begin{proof}[Proof of Corollary \ref{corlev}]
Let $\tau \in H^0 (X, m K_X)$ be an arbitrary section. Let $a \in \mathbb R^+$ and let $\Sigma_a$ be the divisor corresponding to $s+ a \tau$. Then for $a$ small enough, the ideal 
$$\mathcal{J}_a := \lim_{\ep \to 0} \mathcal{I} ((1-\ep) \frac{m-1}{m} \Sigma_a )$$
is discrete.  To extend $\tau$, it is sufficient to extend $s+ a \tau$ for any $a$ small enough.

We suppose by induction that there exists a smooth extension $s_k$ of $s+ a \tau$ such that $\dbar s_k = t^{k+1} \Lambda_k$.
To prove the corollary, it is sufficient to prove that $\Lambda_k$ is $\dbar$-exact. 
Let 
\begin{equation}\label{vers1} 
\lambda_k:= \frac{\Lambda_k}{dt}\Big|_X 
\end{equation}
be the restriction of $\Lambda_k$ to $X$. Then we have $\dbar \lambda_k=0$. 
Since $\mathfrak J_a$ is discrete,  the image of $\lambda_k$  via the projection morphism
\begin{equation}\label{vers2} 
H^1(X, mK_X)\to H^1\big(X, \O_X (mK_X)\otimes \O_X/\mathfrak J\big)
\end{equation}
is equal to zero. It follows that there exist a $\mathcal C^\infty$ form $a_k$ of type $(n,0)$ with values in $(m-1)K_X$, as well as a $\mathcal C^\infty$ form $b_k$ of type $(n, 1)$ with values the sheaf $K_X^{\otimes(m-1)}\otimes \mathfrak J$ so that 
\begin{equation}\label{vers3} 
\lambda_k= b_k+ \dbar a_k.
\end{equation}
Thanks to Theorem \ref{MT}, we know that $b_k$ is $\dbar$-exact. Then $\lambda_k$ is $\dbar$-exact. It means that we can find a smooth extension $s_{k+1}$ such that $\dbar s_{k+1} = t^{k+1} \Lambda_{k+1}$. By induction, $s+a \sigma$ admits an extension for any order. Therefore $s+a \tau$ admits a holomorphic extension to a neighborhood of the center fiber $X$ cf. \cite[Lemma 1.1]{Lev83}.

\medskip

Now we assume that the total space $\cX$ is Kähler. We consider the $m$-relative Bergman kernel  metric $h_B$ on $K_{\cX /\Delta}$ introduced in \cite{BP08}. We know that $i\Theta_{h_B} ( K_{\cX /\Delta}) \geq 0$ on $\cX$.  Thanks to the first part of the corollary, $\tau$ admits a holomorphic extension to a neighborhood of the center fiber $X$, i.e., $S \in H^0 (p^{-1}(U), m K_{\cX/ \Delta})$ and $S |_X =\tau$, where $U$ is a neighborhood of $0\in \Delta$. 
Then there is a constant $C$ such that for any $t$ close to $0$, we have 
$$\int_{\cX_t} |S |_{\cX_t}| ^{\frac{2}{m}}  \leq C .$$
Then we have the pointwise estimate $|S|_{\cX_t}  |^{\frac{2}{m}}  _{h_B}  \leq C$ for a generic $t$ close to $0$.  As a consequence, $\int_X |\tau|_{h_B} ^2 <+\infty$. By using Ohsawa-Takegoshi extension, $\tau$ admits a holomorphic extension to the total space $\cX$. 
\end{proof}	
\medskip

\noindent The proof of Theorem \ref{007} follows along the same lines as in Theorem \ref{MT}, so we only sketch it very briefly.
\begin{proof}
Notice first that since $\sigma_i$ is  a section of $\mathcal{E}_i\otimes \mathcal{O}_{\mathcal{X}} / t^{k+2} \mathcal{O}_{\mathcal{X}}$,
the curvature term containing $\sigma_i$ vanishes on $X\setminus Div (\sigma_i)$ up to order $k$. 
Then the same argument as Lemma \ref{linalg} implies that
\begin{equation}\label{varthm}
\lambda_k = \dbar \alpha_k + D' (\beta_k) \qquad\text{on } X \setminus (Div(s) + \sum_i Div (\sigma_i)).
\end{equation}
The scheme of the proof of Theorem \ref{MT} applies \emph{mutatis mutandis} and we infer that $\lambda_k$ is $\dbar$-exact. The theorem is proved.
\end{proof}

%%%%%%%%%%%%%%%%%%%%%%%%%%%%%%%%%%%%%%%%%%%%%%%%%%%%%%%%%%%%%%%%%%%%%%%%%%%%%%%%%%%%%%%%%%%%%%%%%%%%%%%%%%%%%%%%%%%%%%%%%%%%%%%%%%%%%%%%%%%%%%%%%%%%%%%%%

\section{More results and applications}\label{ana}

%%%%%%%%%%%%%%%%%%%%%%%%%%%%%%%%%%%%%%%%%%%%%%%%%%%%%%%%%%%%%%%%%%%%%%%%%%%%%%%%%%%%%%%%%%%%%%%%%%%%%%%%%%%%%%%%%%%%%%%%%%%%%%%%%%%%%%%%%%%%%%%%%%%%%%%%%

\subsection{On first order extension}\label{firstorderex} 
The first result that we establish here is that one can always 
extend a pluricanonical section $s$ defined on $X$ to the first infinitesimal neighborhood. This is  established in the paper \cite{Lev83} by different methods. 

\noindent Let $\displaystyle \big(\Omega_i, (t,z_i)\big)_{i\in I}$ be an arbitrary coordinate system on $\cX$. This induces automatically a trivialization of the relative canonical bundle, so our section $s$ corresponds to a family of holomorphic functions $\displaystyle (f_i)_{i\in I}$.
We obtain here the following more precise version of \cite{Lev83}, as follows.
\begin{thm} Let $s$ be a section of the pluricanonical bundle $mK_X$ defined on the central fiber of the map $p:\cX\to \mathbb D$. Then $s$ 
admits a smooth extension $s_1$ such that 
\begin{equation}\label{july224}
s_1|_{\Omega_i}\simeq \left(f_i(z_i)+ t f_{i1}(z_i)\right)(dt\wedge dz_i)^{\otimes m}
\end{equation}
modulo $\O(|t|^2)$, where $f_{i1}$ are holomorphic such that $\displaystyle \int_{X\cap \Omega_i}\frac{|f_{i1}|^2}{|f_i|^{2\frac{m-1}{m}}}d\lambda(z_i)< \infty$. 
\end{thm}

\noindent In other words, the restriction of the Lie derivative $\Lie_\Xi(s_1)$ of $s_1$ to the central fiber is $L^2$ with respect to the 
weight induced by $s$, for any vector field $\Xi$.
\medskip

\begin{proof}
Let $s_0$ be any smooth extension of $s$, so that we have 
\begin{equation}\label{new11}
	\dbar s_0= t\Lambda_0
\end{equation}
and define
 $\displaystyle \lambda_0 :=\frac{\Lambda_0}{dt} |_X$ be the restriction of $\Lambda_0$ to the central fiber.
 
 \noindent Let $\Xi$ be any vector field as in \eqref{eq18}. 
As we have seen in the section \ref{eval}, by applying the Lie derivative $\Lie_\Xi$ to \eqref{new11} we have
\begin{equation}\label{eq42}
\frac{1}{m}\lambda_1= \dbar a_1 + D'\big(b_1\big)
\end{equation}
on $X\setminus Z$, where we use the (abuse of...) notation
\begin{equation}\label{eq43}
a_1:= \frac{\Lie_\Xi(s_0)}{dt}\Big|_X,\qquad b_1:= \rho_1|_X
\end{equation}
and $\rho_1$ is the form already appearing in Lemma \ref{linalg}.
\medskip

\noindent The form $D'\big(b_1\big)$ is certainly closed on $X\setminus Z$. In order to apply 
Theorem \ref{007'}, we have to verify the $L^2$ hypothesis. We recall that here the data is
\begin{equation}\label{eq44}
L:= (m-1)K_X, \qquad \varphi_L:= \frac{m-1}{m}\log |s_i|^2
\end{equation}
where $s_i$ is the local expression of of the section $s$ of $mK_X$.  
\smallskip

\noindent Given the explicit expression of $a_1$ and $b_1$, the $L^2$ hypothesis in Theorem \ref{007'} are easy to check. ``For punishment"\footnote{\emph{Complex Manifolds}, K. Kodaira and J. Morrow}, we will give the details for $b_1$ and its derivative.

With respect to the local coordinates in \eqref{eq34}, we have 
\begin{equation}\label{eq45}
b_1|_U\simeq s_i\sum_\alpha \wh {dz_i^\alpha}\wedge \dbar v_i^\alpha
\end{equation}
and this is clearly $L^2$ with respect to the weight in \eqref{eq44}. The symbol
$\simeq$ is \eqref{eq45} means that the restriction of $b_1$ to $U$ equals the RHS
with respect to the coordinates $(z_i)$.

Moreover, the fact that $D'_X(b_1)$ is equally $L^2$ boils down to the convergence of the 
integral
\begin{equation}\label{eq46}
\int_{U_i}\frac{|df|^2}{|f|^{2\frac{m-1}{m}}}d\lambda
\end{equation}
where $f$ is a holomorphic function defined in the open set of $\CC^n$ containing $U$.
This in turn is quickly verified by a change of variables formula.
\smallskip

\noindent In conclusion, we have 
\begin{equation}\label{eq47}
\lambda_1= \dbar \gamma_1
\end{equation}
on $X$.
\smallskip

\noindent Now we construct the $2$-jet extension as in the proof of Theorem \ref{MT} 
Let $\Gamma_1$ be a smooth extension of $\gamma_1$ as in the proof of Theorem \ref{MT}, so that we have $\frac{\Gamma_1}{dt}\Big|_X= \gamma_1$ and
\begin{equation}\label{eq48}
\Lambda_0= \dbar \Gamma_1+ t\Lambda_1
\end{equation}
on the total space $\cX$, where $\Lambda_1$ is smooth forms. Let $s_1:= s_0- t\Gamma_1$
\smallskip

On the other hand, let $\displaystyle \big(\Omega_i, (t,z_i)\big)_{i\in I}$ be an arbitrary coordinate system on $\cX$. This induces automatically a trivialization of the relative canonical bundle, so our section $s$ corresponds to a family of holomorphic functions $\displaystyle (f_i)_{i\in I}$ such that we have
\begin{equation}\label{july1}
f_i(z_i)= \xi_{ij}(t, z_j)f_j(z_j)+ t\Lambda_{ij}(t,z_j)
\end{equation}
on overlapping subsets $\Omega_i\cap \Omega_j$. In \eqref{july1} we denote by
$\xi_{ij}$ the transition functions corresponding to the pluricanonical bundle
$mK_{\cX/\DD}$ and $\Lambda_{ij}$ are holomorphic on $\Omega_i\cap \Omega_j$.
By taking the derivative with respect to $t$ in \eqref{july1} and restricting to
the central fiber, we see that
\begin{equation}\label{july2}
\int_{U_{ij}}\frac{|\Lambda_{ij}(0, z_j)|^2}{|f_j(z_j)|^{2\frac{m-1}{m}}}d\lambda(z_j)< \infty
\end{equation}
where $U_{ij}= \Omega_i\cap \Omega_j\cap X$.
\smallskip

\noindent The global interpretation of this is that we can construct a $\cC^\infty$ extension 
$s_0$ of $s$ such that with respect to the coordinates fixed above we have
\begin{equation}\label{july3}
s_0|_{\Omega_i}= \left(f_i(z_i)+ t \rho_i(t, z_i)\right)(dt\wedge dz_i)^{\otimes m}
\end{equation}
where $\rho_i$ is smooth and its restriction to the central fiber verifies the integrability condition \eqref{july2}.
\smallskip

\noindent In conclusion, we can always assume that the extension $s_1$ obtained in
\eqref{eq48} can be locally written as
\begin{equation}\label{july4}
s_1|_{\Omega_i}\simeq \left(f_i(z_i)+ t f_{i1}(z_i)\right)(dt\wedge dz_i)^{\otimes m}
\end{equation}
modulo $t^2$,
where $f_{i1}$ is holomorphic and $\displaystyle \int_{U_{i}}\frac{|f_{i1}(z_i)|^2}{|f_i(z_i)|^{2\frac{m-1}{m}}}d\lambda(z_i)< \infty$.
\end{proof}

\begin{remark}\label{rk1}
This first step in the extension of the section $s$ is somehow misleading, i.e. too simple in some sense. Some of the real difficulties one has to deal with are appearing during the 
extension to the second infinitesimal neighborhood, see the subsection \ref{2nd} below.
\end{remark}

%%%%%%%%%%%%%%%%%%%%%%%%%%%%%%%%%%%%%%%%%%%%%%%%%%%%%%%%%%%%%%%%%%%%%%%%%%%%%%%%%%%%%%%%%%%%%%%%%%%%%%%%%%%%%%%%%%%%%%%%%%%%%%%%%%%%%%%%%%%%%%%%%%
\subsection{Extension of sections whose zero set is non-singular}\label{smoothdiv} In order to extend our section $s$ to the first infinitesimal neighborhood, we have used the Lie derivative 
with respect to an arbitrary vector field $\Xi$. For higher order extension, this does not seem to be possible because of the singularities of the operator $D'_\cX$. In the case of a section $s$ whose zero set $(s=0)\subset X$ is non-singular (treated in \cite{Lev83}), this is done as follows.
\smallskip

\noindent We assume that the extension $s_k$ to the $k^{\rm th}$ infinitesimal neighborhood has already been constructed. 
For each trivializing open set $\Omega_i\subset \cX$ together with fixed coordinates functions
$\displaystyle (z_i^{\alpha})_{\alpha=1,\dots, n}$ we denote by 
\begin{equation}\label{ideal3}
f_i
\end{equation}
the holomorphic function corresponding to the $k^{\rm th}$-jet of $s_k$. The set $(f_i= 0)\subset \Omega_i$ is 
non-singular and transversal to the central fiber $X$. Then we define
a new set of local coordinates 
\begin{equation}\label{ideal4}
(t, w_i^1,\dots, w_i^n)
\end{equation}
on $\Omega_i$ such that $w_i^1= f_i$ --here we use the fact that at each point of
$\Omega_i$ we can find an index $\alpha$ such that the form
\begin{equation}\label{ideal9}
df_i\wedge dt\wedge  \widehat{dz_i^\alpha}\neq 0
\end{equation}
is non-vanishing at the said point. Strictly speaking we have shrink eventually the 
set $\Omega_i$; however, given that the map $p:\cX\to \DD$ is proper we 
can assume that the coordinates \eqref{ideal4} are defined on  $\Omega_i$ itself. 
\smallskip

\noindent Therefore the equality
\begin{equation}\label{ideal5}
w_i^1= g_{i j}(t, w_j)\cdot w_j^1+ t^{k+1}\tau_{ij}(t, w_j)
\end{equation}
is valid on the overlapping subsets $\Omega_i\cap \Omega_j$, where $(g_{ij})$ are the 
transition functions for the bundle $K_\cX+ L$.
\smallskip

\noindent We introduce next the following vector field 
\begin{equation}\label{ideal6}
\Xi_k:= \sum \theta_i\frac{\partial}{\partial t}\Big|_{\Omega_i}
\end{equation}
corresponding to the covering $\big(\Omega_i, (t, w_i)\big)$. 

\noindent By the transition relation \eqref{ideal5}, we have
\begin{equation}\label{ideal7}
\Xi_k|_{\Omega_i}= \frac{\partial}{\partial t}\Big|_{\Omega_i}+ (a_i w_i^1+ b_i t^{k})\frac{\partial}{\partial w_i^1}+ 
\sum _{\alpha\geq 2}v_i^\alpha\frac{\partial}{\partial w_i^\alpha}
\end{equation} 
where $a_i, b_i$ and $v_i^\alpha$ are smooth. We notice that the vector field 
$\Xi_k$ have the following important properties.

\begin{enumerate}
\item[(i)] Its coefficients are smooth.

\item[(ii)] The projection $dp(\Xi_k)$ equals $\displaystyle \frac{\partial}{\partial t}$.

\item[(iii)] Modulo the ideal $(t^k)$, it is tangent to the set $f_i=0$.
\end{enumerate}

\smallskip
\noindent The connection we are using on the bundle $(m_0-1)K_{\cX}$ is induced by
the $\mathcal C^\infty$ section $s_k$. This means that with respect to our local coordinates in \eqref{ideal4} we have
\begin{equation}\label{ideal10}
\wt f_i(t, w)= w_i^1h_i(t, w_i)+ t^{k+1}g_i(t, w_i)
\end{equation}  
for some smooth functions $g_i, h_i$. Here $\wt f_i(t, w)$ is the local expression of the section $s_k$.

\medskip

\noindent Given these considerations, we infer the following.

\begin{lemme}\label{trivca}
Let $k\geq 0$ be a positive integer, and assume that the extension $s_k$ such that
\begin{equation}\label{ideal100}
\dbar s_k= t^{k+1}\Lambda_k
\end{equation}has been already constructed. We use the vector field $\Xi_k$ introduced in \eqref{ideal6} and consider the Lie derivative $\displaystyle\Lie_{\Xi_k}$ induced by it. Lemma \ref{linalg} provides us with forms $\alpha_k$ and $\beta_k$ such that
\begin{equation}\label{ideal101}
\Lambda_k= \dbar \alpha_k+ D'_\cX\big(dt\wedge \beta_k)
\end{equation} 
modulo the ideal generated by $t$. Then the restrictions $\alpha_k|_{X}$ and $\beta_k|_X$
are smooth.
\end{lemme}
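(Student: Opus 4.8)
The plan is to run the explicit recursion from the proof of Lemma~\ref{linalg} — which produces $\alpha_k$ and $\beta_k$ by iterating the Lie derivative $\Lie_{\Xi_k}$ and the contraction $\Xi_k\rfloor(\,\cdot\,)$ applied to $s_k$ — and to follow, in the adapted coordinates of \eqref{ideal4}, how the singularities introduced by the covariant derivative $D'_\cX$ propagate along the iteration. We may assume $k\ge 1$, the case $k=0$ being \eqref{eq47}.

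First I would fix a chart $\big(\Omega_i,(t,w_i^1,\dots,w_i^n)\big)$ as in \eqref{ideal4}, so that $w_i^1=f_i$, and recall from \eqref{ideal10} that there $s_k$ is represented by $\wt f_i=w_i^1h_i+t^{k+1}g_i$, with $h_i$ a non-vanishing germ along $(s=0)$ because $df_i\ne 0$; accordingly the connection $D'_\cX$ on $(m-1)K_\cX$ induced by $s_k$ has local connection form $-\tfrac{m-1}{m}\tfrac{\partial\wt f_i}{\wt f_i}$, singular only along $\{\wt f_i=0\}$. Observe that $\wt f_i|_X=w_i^1h_i(0,\cdot)$, so after restriction to $X$ the singularities of $D'_\cX$ are concentrated exactly on the (reduced, smooth) divisor $(s=0)$. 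The decisive feature of the vector field $\Xi_k$ of \eqref{ideal6}, read off from \eqref{ideal7} and \eqref{ideal10}, is the identity
\begin{equation}\label{plankey}
\Xi_k\cdot\wt f_i=\wt f_i\,A_i+t^k B_i ,\qquad A_i,B_i\in\mathcal C^\infty ,
\end{equation}
which is the analytic form of the tangency of $\Xi_k$ to $\{f_i=0\}$ modulo $t^k$; in particular $\Xi_k\cdot w_i^1\in(w_i^1,t^k)\,\mathcal C^\infty$.

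Writing, in the frame $e_i$, $u_j=\phi_j\,dV_i\otimes e_i$ with $dV_i:=dt\wedge dw_i^1\wedge\cdots\wedge dw_i^n$ for the successive forms $u_j$ of \eqref{eq51}--\eqref{eq55} (so that, up to a non-zero constant, $\alpha_k=u_{k+1}$ and $\beta_k=v_{k+1}$), the base case is a short computation: $u_1=\big(\tfrac1m\partial\wt f_i\wedge(\Xi_k\rfloor dV_i)+\wt f_i\,\partial(\Xi_k\rfloor dV_i)\big)\otimes e_i$, and using the shape \eqref{ideal7} of $\Xi_k$ one finds $\phi_1\in(w_i^1,t^k)\,\mathcal C^\infty$, while $\dbar\Xi_k\rfloor s_k=\wt f_i\,(\dbar\Xi_k\rfloor dV_i)\otimes e_i$ so that the coefficients of $v_1$ are divisible by $\wt f_i$. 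The heart of the proof is then an induction on $j$ carrying a suitable invariant along the recursion \eqref{eq51}--\eqref{eq55}, to the effect that $u_j$ and $v_j$ are, locally, the sum of a form that is smooth on $\Omega_i$ and a form that is divisible by $t$ (more precisely by a power of $t$ decreasing by at most one at each step, starting from $t^k$) and whose only singularities lie along $\{\wt f_i=0\}$. Here \eqref{plankey} is used to control the effect of each contraction with $\Xi_k$, the factorisation $\wt f_i=w_i^1h_i+t^{k+1}g_i$ is used to see that each application of $D'_\cX$ either meets a factor $\wt f_i$ (or $w_i^1$, modulo $t$) that absorbs the pole $\tfrac{\partial\wt f_i}{\wt f_i}$, or produces a pole already carrying a power of $t$, and $k\ge 1$ guarantees that the surviving error term still carries a genuine factor $t$ at the last step $j=k+1$.

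Granting the invariant at $j=k+1$, one concludes by restriction: dividing $u_{k+1}$ and $dt\wedge v_{k+1}$ by the ambient $dt$ as in \eqref{eq43} and restricting to $X$, the term divisible by $t$ disappears (on $X$ one has $t\equiv 0$), and the smooth-on-$\Omega_i$ part restricts to a smooth form; since the chart $\Omega_i$ was arbitrary this shows that $\alpha_k|_X$ and $\beta_k|_X$ are smooth on $X$. The hard part is precisely the propagation of the invariant: one must check, over the $k+1$ iterated Lie derivatives, that the singular potential $\tfrac{\partial\wt f_i}{\wt f_i}$ of $D'_\cX$ is never left ``uncompensated'', i.e.\ is always wedged against a form divisible by $\wt f_i$ (respectively $w_i^1$, up to a multiple of $t$) or else carrying enough powers of $t$. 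This is where the three properties of $\Xi_k$ and the normal form of $s_k$ — whose ``error'' is of order $t^{k+1}$, one order beyond the jet we control — are used in an essential way; for a generic $\Xi$ the compensation fails, and indeed $\alpha_k$ and $\beta_k$ then genuinely have a pole of order $k+1$ along $(s=0)$.
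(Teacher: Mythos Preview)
Your approach is essentially the same as the paper's: both arguments hinge on the tangency relation \eqref{ideal29} (your \eqref{plankey}), namely $\Xi_k\cdot(\wt f_i,t^{k+1})\subset(\wt f_i,t^k)$, and both run the explicit recursion \eqref{eq51}--\eqref{eq55} tracking how this prevents the pole of $D'_\cX$ from surviving without a compensating power of $t$. The paper writes the key computation as $\Lie_{\Xi_k}(f\sigma)=\tfrac1m\Xi_k(f)\sigma+f\cdot(\text{smooth})$ (your base case) and then says the iteration keeps the coefficients ``in the ideal $(f,t^k)$'', which is a slightly looser bookkeeping than your ``smooth $+$ $t^{\text{power}}\!\cdot$singular'' invariant, but the mechanism is identical.

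One small point worth sharpening: your phrase ``a power of $t$ decreasing by at most one at each step, starting from $t^k$'' undercounts the actual $t$-divisibility. The first genuinely singular term appears only at step $2$ and carries $t^{2k}$, not $t^k$; more precisely, a term with pole order $b\ge 1$ in $\wt f_i$ at step $j$ is divisible by $t^{(b+1)k-(j-1)}$, so at $j=k+1$ every singular term is divisible by $t^{bk}\ge t^k$. This comfortably gives the factor $t$ you need on restriction to $X$, but your stated invariant (if read literally as $t^{k-(j-1)}$) would land at $t^0$ at the last step. The paper is equally terse here; neither proof spells out the sharp bound, but your identification of the compensation mechanism is correct.
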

\medskip

\noindent Before explaining the proof, we note that Lemma \ref{trivca} combined with Theorem \ref{007'} show that the restriction
\begin{equation}\label{ideal102}
\frac{\Lambda_k}{dt}\Big|_X
\end{equation}
is $\dbar$-exact. Thus we can extend $s$ one step further, given that there exist
forms $\mu_k$ and $\Lambda_{k+1}$ of type $(n+1, 0)$ and $(n+ 1, 1)$ respectively,
such that 
\begin{equation}\label{ideal103}
\Lambda_k= \dbar \mu_k+ t\Lambda_{k+1}.
\end{equation}
By combining \eqref{ideal100} and \eqref{ideal103} we infer the existence of 
$s_{k+1}$ such that 
\begin{equation}\label{ideal104}
\dbar s_{k+1}= t^{k+2}\Lambda_{k+1}.
\end{equation}
We can repeat this procedure inductively, showing that the section $s$ admits an extension to 
the infinitesimal neighborhood of an arbitrary order.
The formal arguments in \cite{Lev83} are implying that $s$ extends to a topological neighborhood of $X$ in $\cX$. 
\smallskip

\noindent We turn now to the proof of Lemma \ref{trivca}.

\begin{proof}
Considering the preparation we have done in the previous sections, the arguments which follow should be clear: we will proceed by induction, by using the fact that for each $k$ we have the relations \eqref{eq55}.

To this end, it would be helpful to remark that we have the following
while computing the obstruction to extend the section modulo $t^{k+2}$, we have to deal with quantities as
\begin{equation}\label{ideal22}
\Lie_{\Xi_k} s_k, \qquad D' _\cX \bigg( dt\wedge \Big( \Xi  \rfloor D' _\cX \big(\Xi \rfloor (\dbar \Xi\rfloor s_k)\big) \Big)\bigg)
\end{equation}
and their iterations. Locally we can write $s_k= f\sigma$, where $f$ is the function given by the expression like 
\eqref{ideal10}, and $\sigma$ is a top form with values in $(m-1)K_\cX$. We have 
\begin{equation}\label{ideal23}
\Lie_{\Xi_k} s_k= \Xi(f)\sigma+ f \Lie_{\Xi_k} \sigma
\end{equation}
in which the second term has a -potentially- singular component 
\begin{equation}\label{ideal24}
f \Lie_{\Xi_k} \sigma \equiv -\frac{m-1}{m}f\cdot \frac{df}{f}\wedge \left(\Xi_k\rfloor \sigma\right)= -\frac{m-1}{m}\Xi_k(f)\sigma.
\end{equation}
\smallskip

\noindent The point here is that the vector field $\Xi_k$ was constructed in such a way 
that $\Xi_k(f)$ is a \emph{multiple} of $f$, plus some power of $t$. 
More precisely, we have
\begin{equation}\label{ideal29}
\Xi_k\cdot (f, t^{k+1})\subset (f, t^{k})
\end{equation} 
where we remark that the ideal $(f, t^{k+1})$ is in fact globally defined on $\cX$.

\noindent Therefore we can 
take as many times the Lie derivative as the power of $t$ allows, the result will still be of the same type. We show next that the same is true for the second term in \eqref{ideal22}. 

Indeed we have 
\begin{equation}\label{ideal25}
D' _\cX \big(\Xi_k \rfloor (\dbar \Xi_k\rfloor s_k)\big)= \partial f\wedge \Xi_k \rfloor (\dbar \Xi_k\rfloor \sigma)+
f D' _\cX \big(\Xi_k \rfloor (\dbar \Xi_k\rfloor \sigma)\big)
\end{equation}
and a further contraction with $\Xi$ gives
\begin{equation}\label{ideal26}
\Xi_k\rfloor \left(\partial f\wedge \Xi_k \rfloor (\dbar \Xi_k\rfloor \sigma)\right)= \Xi_k(f)\cdot \Xi_k\rfloor (\dbar \Xi_k\rfloor \sigma)
\end{equation}
for the first term in \eqref{ideal25}. The singular part of the second one is 
\begin{equation}\label{ideal27}
f\cdot \frac{df}{f}\wedge \Xi_k \rfloor (\dbar \Xi_k\rfloor \sigma)
\end{equation}
and when we contract it with $\Xi_k$ the result will be the RHS of \eqref{ideal26}. 
\medskip

\noindent In conclusion, after the first derivative of the relation \eqref{eq339} the forms 
$u_2$ and $v_2$ of \eqref{eq55} have their coefficients in the ideal $(f, t^k)$. 
We can therefore iterate this procedure, and obtain the conclusion.
\end{proof}

\smallskip

\begin{remark} We note that at each step $k$ we choose a vector field $\Xi_k$ adapted to the corresponding extension $s_k$. 
\end{remark}
\smallskip

\begin{remark} If the zero set $(s=0)$ of our initial section is singular, then one can still 
construct a vector field adapted to it as in the proof just finished, 
but the difference is that the new $\Xi$ will be singular along $s=0, ds= 0$.
\end{remark}
%%%%%%%%%%%%%%%%%%%%%%%%%%%%%%%%%%%%%%%%%%%%%%%%%%%%%%%%%%%%%%%%%%%%%%%%%%%%%%%%%%%%%%%%%%%%%%%%%%%%%%%%%%%%%%%%%%%%%%%%%%%%%%%%%%%%%%%%%%
\subsection{A sufficient condition}\label{sufcond} In this subsection we assume that the divisor $\Div(s)$ corresponding to the section $s\in H^0(X, mK_X)$ has the following property.
\begin{hyp}\label{cond1}
There exists a divisor $\Sigma$ on $\cX$ such that we have. 
\begin{enumerate}
\smallskip

\item[(i)] The divisor $\Sigma+ X$ is snc.
\smallskip

\item[(ii)] The support of $\Div(s)$ is contained in the restriction $\displaystyle \Sigma|_X$.
\end{enumerate}
\end{hyp}
\medskip

\noindent Then we show that the following holds true.

\begin{thm}\label{hypo1} Let $s\in H^0(X, mK_X)$ be a pluricanonical section such that Hypothesis \ref{cond1} is satisfied. Then $s$ admits a holomorphic extension to $\cX$. 
\end{thm}

\begin{proof} The role of the two conditions (i) and (ii) above is to furnish a special coordinate system on $\cX$ which we will use during all the proof. 

\noindent As consequence of these requirements we have a finite covering $\displaystyle \big(\Omega, (t, z_i)\big)_{i\in I}$ of an open subset of the central fiber $X$ of $p$ such that for each $i\in I$ we have $\Sigma\cap \Omega_i= (z_i^1\dots z_i^r= 0)$ together with 
the transition functions 
\begin{equation}\label{suff1}
z_i^\gamma= g_{ij}^\alpha(t, z_j)z_j^\gamma,\qquad z_i^\beta= \phi_{ij}^\beta(t, z_j)
\end{equation}
for each $\gamma= 1,\dots, r$ and $\beta= r+1,\dots, n$. We denote the resulting function by $\Phi$.
\smallskip

\noindent These coordinates are inducing a trivialization of the canonical bundle of $K_\cX$ with respect to which our section $s$ corresponds locally to holomorphic functions 
\begin{equation}\label{suff2}
f_{i0}(z_i)= g_i(z_i)\prod_{q=1}^r(z_i^q)^{m_q}
\end{equation}
where $g_i$ is a nowhere vanishing holomorphic function on $\Omega_i\cap X$ and $m_q\geq 0$.
\medskip

\noindent We formulate the next statement.
\begin{claim}\label{blb}
For every order $k\geq 0$ there exists a section 
$s_k\in H^0(\cX, \cF_k)$ --with $\Lie= (m-1)K_\cX$-- such that $\displaystyle s_k|_X= s$ and such that it can be written locally as follows
\begin{equation}\label{suff3}
f_i(t, z_i)= \sum_{\alpha=0}^k \frac{t^\alpha}{\alpha!}f_{i\alpha}(z_i)
\end{equation}
where $f_{i\alpha}$ are holomorphic, such that for every positive real $\ep> 0$ and for every $\alpha=0,\dots, k$ we have 
$\displaystyle \int_{X\cap \Omega_i}\frac{|f_{i\alpha}|^2}{|f_{i0}|^{2(1-\ep)\frac{m-1}{m}}}< \infty$.
\end{claim}
\medskip

\noindent We prove next Claim \ref{blb} by induction on $k$. For $k=0$ this is simply the hypothesis, so let us assume that a section $s_k$ as in \eqref{suff3} exists. We are constructing next $s_{k+1}$. If we denote by $\xi_{ij}$ the transition functions of the bundle $mK_\cX$, then we have
\begin{equation}\label{suff4}
f_i(t, z_i)= \xi_{ij}(t, z_j)\left(f_j(t, z_j)+ t^{k+1}\Lambda_{ij}(t, z_j)\right)
\end{equation}
on the overlapping sets $\Omega_i\cap \Omega_j$. By the equalities \eqref{suff1} it follows that 
\begin{equation}\label{suff5}
\frac{\partial^l f_{i\alpha}\big(\Phi(t, z_j)\big)}{\partial t^l}\Big|_{t=0}
\end{equation}
is $L^2$ integrable with respect to the measure $\displaystyle \frac{d\lambda(z_i)}{|f_{i0}|^{2(1-\ep)\frac{m-1}{m}}}$ for any positive integer $l$ and for every positive $\ep> 0$. Indeed, given \eqref{suff1} if some monomial, say, $\displaystyle (z_i^1)^{\delta_1}$ divides $f_{i\alpha}$, the same is true for any derivative with respect to $t$ as in \eqref{suff5} since the effect of the transition functions is to replace this monomial by  $\displaystyle (g_{ij}^1(t, z_j)z_j^1)^{\delta_1}$. Thus the $L^2$ condition that we are imposing to each component $f_{i\alpha}$ is preserved.

\noindent The conclusion is that we have 
\begin{equation}\label{suff6}
 \int_{X\cap \Omega_{ij}}\frac{|\Lambda_{ij}(0, z_j)|^2}{|f_{j0}(z_j)|^{2(1-\ep)\frac{m-1}{m}}}d\lambda(z_j)< \infty
\end{equation}
for every positive $\ep> 0$ which means precisely that the $(n, 1)$ form $(m-1)K_X$-valued form $\lambda$ associated to the cocycle 
$(\Lambda_{ij}|_X)_{i,j}$ is verifies the hypothesis of Theorem \ref{MT}. 
\smallskip

\noindent The proof of Theorem \ref{MT} shows that $\displaystyle \frac{\lambda}{\prod s_{Y_i}^{\nu_q}}$ is $\dbar$-exact, where $Y_i$ are the components of $\Sigma|_X$ and 
$\displaystyle \nu_q:= \left\lfloor \frac{m-1}{m} m_q\right\rfloor$
if $\displaystyle \frac{m-1}{m} m_q\not\in \Z$ and $\displaystyle \nu_q:= \frac{m-1}{m}m_q- 1$ if this number is an integer.
\smallskip

\noindent This can be re-interpreted in terms of cocycles as follows: there exists holomorphic $L^2$ functions $f_{j k+1}(z_j)$ such that we have 
\begin{equation}\label{suff7}
\Lambda_{ij}(0, z_j)= f_{j k+1}(z_j)- \xi_{ji}(0, z_i)f_{i k+1}(z_i),
\end{equation}
such that the holomorphic functions $f_{j k+1}$ verify the $L^2$ requirement in the Claim. When combined with \eqref{suff4}, this completes the proof Theorem \ref{hypo1}.
\end{proof}
\medskip

\begin{remark} In general (i.e. without the special co-ordinate system provided by the 
hypothesis at the beginning of this subsection) a section of $mK_{\cX}$ which is holomorphic mod 
$t^{2}$ is described by the  
co-cycle relation
\begin{equation}\label{suff4bis}
f_i(t, z_i)= \xi_{ij}(t, z_j)\left(f_j(t, z_j)+ t^{2}\Lambda_{ij}(t, z_j)\right)
\end{equation}
on the overlapping sets $\Omega_i\cap \Omega_j$. Moreover, we can assume that the equality
\begin{equation}
f_i(t, z_i)= f_{i}(z_i)+ tf_{i 1}(z_i)
\end{equation}
holds, with $f_{i \alpha}$ belonging to the space $L^{2}$ with respect to the metric $\displaystyle 
\frac{m-1}{m}\log|s|^2$ on the central fiber, cf. Remark \ref{rk1}. 
\smallskip

\noindent It follows that the function $z_j\to \Lambda_{ij}(0, z_j)$ in \eqref{suff4bis} belongs to the ideal generated by the functions $f_j$ (i.e. our initial section $s$) and their partial derivatives up to order $2$. 
\end{remark}
\smallskip

\noindent In this context, would be good to know the answer to the following question.

\begin{quest}
Does it follows that the resulting $(n, 1)$-form $\lambda$ can be written as
\begin{equation}\label{megasuff1}
\lambda= D'u+ v
\end{equation}
where $v$ is $L^2$ and $u$ is obtained by successive integration, both being $\dbar$-closed? It is certainly the case locally. A positive answer to this question would certainly be a good news: by the global Lie derivative argument 
we know that $\lambda$ belongs to the image of $D' +\dbar$ so writing 
\begin{equation}\label{megasuff2}
v= D'(\alpha- u)+ \dbar \beta
\end{equation}
we will be able to show that $v$ is $\dbar$-exact. 
\end{quest}

\medskip
%%%%%%%%%%%%%%%%%%%%%%%%%%%%%%%%%%%%%%%%%%%%%%%%%%%%%%%%%%%%%%%%%%%%%%%%%%%%%%%%%%%%%%%%%%%%%%%%%%%%%%%%%%%%%%%%%%%%%%%%%%%%%%%%%%%%%%%%%%%
\subsection{Extension to the second infinitesimal neighborhood}\label{2nd} We have already mentioned that the techniques we are developing in this paper are allowing us -in some particular cases- to extend the section $s$ to the second infinitesimal neighborhood of the central fiber. We present the arguments in this section.
\smallskip

\noindent Thanks to the subsection \ref{firstorderex}, we can always find a $1$-order extension $s_2$, i.e.
$$s_2 |_X =s , \qquad \dbar s_2 =t^2 \Lambda_2 .$$
The main observation is that if we write 
\begin{equation}\label{part1}
\Lambda_2= \dbar \alpha_2+ D'_\cX\big(dt\wedge \rho_2),
\end{equation}
as in Lemma \ref{trivca}, then the restriction $\displaystyle \rho_2|_X$ is 
\emph{automatically} in $L^2$ under the assumption of  Theorem \ref{order2}. 
Here the properties \eqref{stupid_cond1} of the vector field $\Xi$ used in order to define the Lie derivative. This is the content of our next statement.

\begin{lemme}Under the hypothesis of Theorem \ref{order2}, we have 
\begin{equation}\label{part2}
\int_X|\rho_2|^2_{\omega_E}e^{-\varphi_L}dV_{\omega_E}< \infty.
\end{equation}
\end{lemme}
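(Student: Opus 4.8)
The plan is to make the construction of $\rho_2$ in Lemma~\ref{linalg} (with $k=1$) completely explicit, to read off the order of vanishing of $\rho_2|_X$ along $(s=0)$, and only then to integrate against $e^{-\varphi_L}$ and the Poincar\'e‑type volume $dV_{\omega_E}$. With $L=(m-1)K_\cX$ and the connection $D'_\cX$ induced by $s_2$, the two iterations in the proof of Lemma~\ref{linalg} give, up to a non‑zero constant, $\rho_2=v_2$, where, writing $s_2=\tilde f\,\Omega\wedge dt\otimes e$ locally with $\Omega=dz^1\wedge\cdots\wedge dz^n$ and $v^\alpha$ the components of $\Xi$ as in \eqref{eq18},
\[
dt\wedge v_1=\dbar\Xi\rfloor s_2=(-1)^n\,dt\wedge(\tilde f\,\psi\otimes e),\qquad \psi:=\sum_{\alpha}(-1)^{\alpha-1}\dbar v^\alpha\wedge\widehat{dz^\alpha},
\]
\[
u_1=\Lie_\Xi s_2=\big(\tfrac1m\,\Xi\!\cdot\!\tilde f+\tilde f\,\mathrm{div}_z\Xi\big)\Omega\wedge dt\otimes e,\qquad dt\wedge v_2=\dbar\Xi\rfloor u_1+dt\wedge\big(\Xi\rfloor D'_\cX(\Xi\rfloor(dt\wedge v_1))\big).
\]
Thus $\psi$ is precisely the ``$b_1$-type'' expression of \eqref{eq45}, but now \emph{without} the extra factor $s$.

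Next I would run the algebra. Since $v_1$ and the relevant part of $u_1$ already carry the factor $\tilde f$, the pole $\tfrac{m-1}{m}\tfrac{\partial\tilde f}{\tilde f}$ of $D'_\cX$ is only partially effective: for an $L$-valued form $\eta$ one has $D'_\cX(\tilde f\eta)=\tfrac1m\,\partial\tilde f\wedge\eta+\tilde f\,D'_\cX\eta$, so $u_1$ and $D'_\cX v_1$ are in fact \emph{smooth}. Using \eqref{eq27}, i.e. $\Xi\rfloor D'_\cX(\Xi\rfloor(dt\wedge v_1))\equiv\Xi\rfloor D'_\cX v_1+D'_\cX(\Xi\rfloor v_1)$ modulo $dt$, and expanding, the two potentially singular terms $\pm\tfrac1m\,\partial\tilde f\wedge(\Xi\rfloor\psi)$ cancel, and one is left with
\[
\rho_2\big|_X=c\,(\Xi\!\cdot\!\tilde f)\big|_X\,\psi\big|_X+s\cdot\Theta\big|_X ,
\]
where $c$ is a constant and $\Theta$ is assembled from $\psi$, $\partial\psi$ and their contractions with $\Xi$. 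Since $v^\alpha$ is smooth on $\cX$ and each term of $\partial\psi$ is forced to contain the factor $\Omega$ (of small $\omega_E$-norm), the form $\Theta$ has bounded $\omega_E$-norm; and $(\Xi\!\cdot\!\tilde f)|_X$ is bounded. Hence $\big|\rho_2|_X\big|^2_{\omega_E}\le C\big(\big|\psi|_X\big|^2_{\omega_E}+|s|^2\big)$.

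Finally I would estimate $\psi|_X$, which is where \eqref{stupid_cond1} is used. After a log resolution we may assume $(s=0)$ is snc; in the local model $(s=0)=(z_1=0)$ the large eigendirection of $\omega_E$ is the transverse one, so $|\partial_1|^2_{\omega_E}\sim|z_1|^{-2}\big(\log|z_1|^2\big)^{-2}$ while $|d\bar z^\beta|^2_{\omega_E}\sim|\widehat{dz^1}|^2_{\omega_E}\sim1$ for $\beta\ge2$. The bound $\big|\dbar\Xi|_X\big|^2_{\omega_E}\le C\log^2(1/|s|^2)$ therefore forces $|\partial_{\bar\beta}v^1|^2\le C|s|^2\log^4(1/|s|^2)$ for the directions $\beta$ tangent to $(s=0)$, and the corresponding component of $\dbar v^1\wedge\widehat{dz^1}$ is exactly the $\omega_E$-dominant part of $\psi|_X$, every other contribution carrying an additional factor $|z_1|^2\log^2|z_1|^2$. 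Thus $\big|\psi|_X\big|^2_{\omega_E}\le C|s|^2\log^4(1/|s|^2)$, and so $\big|\rho_2|_X\big|^2_{\omega_E}\le C|s|^2\log^4(1/|s|^2)$ near $(s=0)$. Since $e^{-\varphi_L}=|s|^{-2(m-1)/m}$ and $dV_{\omega_E}$ has only a Poincar\'e‑type singularity along $(s=0)$,
\[
\int_X\big|\rho_2|_X\big|^2_{\omega_E}e^{-\varphi_L}\,dV_{\omega_E}\le C\int_X|s|^{2/m}\log^4\!\tfrac1{|s|^2}\,dV_{\omega_E}<\infty ,
\]
because in adapted coordinates the integrand is, up to constants, $\prod_i|z_i|^{2p_i/m}\big(\log\tfrac1{|z_i|}\big)^{N}\prod_i\tfrac{d\lambda(z_i)}{|z_i|^2\log^2|z_i|^2}$, and each one‑variable factor converges thanks to $p_i/m>0$. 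The main obstacle is the bookkeeping of the second paragraph: one must check that after the two Lie derivatives and the cancellations \emph{no} term of $\rho_2|_X$ develops an actual pole along $(s=0)$, so that the only surviving singular behaviour is the mild growth of $\psi|_X$ governed by \eqref{stupid_cond1}; granting the normal form $\rho_2|_X=(\text{bounded})\cdot\psi|_X+s\cdot(\text{bounded})$, the remaining convergence argument is routine.
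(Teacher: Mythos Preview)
Your algebraic reduction is the same as the paper's: both arrive at
\[
\rho_2\big|_X \equiv c\,\Xi(f)\cdot\big(\dbar\Xi\rfloor\Omega\big) \pmod{f},
\]
i.e.\ $\rho_2|_X=c\,\Xi(\tilde f)\,\psi + s\cdot\Theta$ with $\Theta$ smooth. The difficulty is in the analytic estimate that follows, and here your argument has a genuine gap.

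The pointwise bound $|\psi|^2_{\omega_E}\le C\,|s|^2\log^4(1/|s|^2)$ is \emph{not} valid in general. A log resolution makes the \emph{support} of $(s=0)$ snc, but on the resolution one has $\pi^\star s=\prod z_i^{p_i}$ with multiplicities $p_i$ that can be arbitrarily large, whereas $\omega_E$ has Poincar\'e singularities along the \emph{reduced} divisor. Since $\psi=\dbar\Xi\rfloor\Omega$, one has the pointwise identity $|\psi|^2_{\omega_E}=|\Omega|^2_{\omega_E}\cdot|\dbar\Xi|^2_{\omega_E}$, and $|\Omega|^2_{\omega_E}$ behaves like $\prod_i|z_i|^2\log^2|z_i|$, \emph{not} like $|s|^2=\prod_i|z_i|^{2p_i}$. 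Thus your estimate degenerates to $|\psi|^2_{\omega_E}\,dV_{\omega_E}\le C\log^2(1/|s|^2)\,d\lambda$, and with only ``$(\Xi\!\cdot\!\tilde f)$ bounded'' you are left with $\int |f|^{-2(m-1)/m}\log^2(1/|s|^2)\,d\lambda$, which diverges as soon as some $p_i\ge 2$.

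The paper closes this gap by \emph{not} treating $\Xi(\tilde f)$ as merely bounded, but by splitting $\Xi(f)|_X=\partial_t f+\sum v^\alpha\partial_\alpha f$ and using, for the $\partial_t f$ piece, the crucial fact that the first--order extension was constructed so that $\int|\partial_t f|^2|f|^{-2(m-1)/m}\,d\lambda<\infty$; an openness argument then gives a small extra $\ep_0$ in the exponent which absorbs the logarithmic factor coming from the hypothesis on $\dbar\Xi$. The spatial piece $\sum v^\alpha\partial_\alpha f$ is handled separately, using the explicit shape $\partial f/f=\sum p_i\,dz_i/z_i$ after resolution, again without relying on a pointwise bound of the type you claim. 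Your overall strategy is right, but you must replace ``$\Xi(\tilde f)$ is bounded'' by this finer $L^2$ information on $\partial_t f$.
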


\begin{proof} This will be done by an explicit evaluation of $\rho_2$. Our first claim is 
that it is enough to show the convergence of the integrals
\begin{equation}\label{part3}
\int_{(\CC^n, 0)}|f\wh{dz^\alpha}\wedge \dbar v|^2_{\omega_E}\frac{dV_{\omega_E}}{|f|^{2\frac{m-1}{m}}} 
\end{equation}
and 
\begin{equation}\label{part4}
 \int_{(\CC^n, 0)}|\partial f\wedge \wh{dz^{\alpha, \beta_k}}\wedge\dbar v|^2_{\omega_E}\frac{dV_{\omega_E}}{|f|^{2\frac{m-1}{m}}},
\end{equation}
as well as
\begin{equation}\label{part4t}
 \int_{(\CC^n, 0)}|\partial_t f|^2|\dbar \Xi|^2_{\omega_E}\frac{d\lambda}{|f|^{2\frac{m-1}{m}}}.
\end{equation}
This is a consequence of formula 
\eqref{eq55}, the computations are as follows. We have
\begin{equation}\label{calc3}
\dbar\Xi\rfloor \Lie_\Xi(s)\simeq \frac{1}{m}\Xi(f)\cdot \dbar\Xi\rfloor (dt\wedge dz) 
\end{equation}
modulo a term divisible by $f$. On the other hand, we have 
\begin{equation}\label{calc4}
D'\big(\Xi\rfloor(\dbar\Xi\rfloor s)\big)\simeq \frac{1}{m} \partial f\wedge \Big(\Xi\rfloor \big(\dbar\Xi\rfloor(dt\wedge dz)\big)\Big)
\end{equation}
again modulo a multiple of $f$. A further contraction with $\Xi$ gives 
\begin{equation}\label{calc5}
\Xi\rfloor \Big(D'\big(\Xi\rfloor(\dbar\Xi\rfloor s)\big)\Big)\simeq 
\frac{1}{m} \Xi(f)\cdot \Xi\rfloor \big(\dbar\Xi\rfloor(dt\wedge dz)\big).
\end{equation}
Finally, we apply $dt\wedge \cdot$ in \eqref{calc5}, and what we get is the same as 
the RHS of \eqref{calc3}. Now given the formula which computes 
$dt\wedge \rho_2$ our claim follows.
\smallskip

Coming back to the integrals above, for \eqref{part3} things are clear because the volume of $(X, \omega_E)$ is finite.
Modulo the blow-up map $\pi$ in Section \ref{red}, the integral \eqref{part4} reduces to the evaluation of the next quantity
\begin{equation}\label{part4mm}
 \sum_{\beta_k=1}^p\int_{(\CC^n, 0)} \prod_{\alpha=1}^p |z_\alpha|^{2\delta_\alpha}\left|\frac{\wh {dz_\beta}}{z_\beta}\right|^2_{ \omega_E} dV_{\omega_E}.       
 \end{equation}
 where $\delta_\alpha> 0$ are positive rational numbers. The convergence of \eqref{part4mm}
 follows, given that we know the singularities of the metric $\omega_E$.

 \noindent For the expression \eqref{part4t} we use the fact that after the first order extension cf. Section \ref{firstorderex},  we have
\begin{equation}\label{part200}
 \int_{(\CC^n, 0)}|\partial_t f|^2\frac{d\lambda}{|f|^{2\frac{m-1}{m}}}< \infty.
\end{equation} 
But then it follows that we also have
\begin{equation}\label{part201}
 \int_{(\CC^n, 0)}|\partial_t f|^2\frac{d\lambda}{|f|^{2\frac{m-1}{m}+ \ep_0}}< \infty
\end{equation}
for some positive real $\ep_0> 0$. This is enough to absorb the term arising from $|\dbar \Xi|^2_{\omega_E}$, because of the hypothesis \eqref{stupid_cond1}.
 \end{proof}
 \medskip

 \noindent In conclusion, we find ourselves in the following situation i.e. the setting of Theorem \ref{L2I}: let $(L, h_L)$ be a holomorphic line bundle on $X$ and the possible singular metric $h_L$ is of analytic singularities. Let $Z$ be the singular locus of $h_L$ and let $\omega_E$ be a Poincaré type metric on $X$ with poles along the $Z$. We have a $L$-valued $L^2$ form $\rho$ of type $(n-1,1)$ on $X$, such that 
 \begin{equation}
 \tau:= D'_X\rho
 \end{equation}
 is $\dbar$ closed. If $\tau$ would be $L^2$, then we can apply directly 
 Theorem \ref{007'} and conclude that $\tau$ is $\dbar$-exact. However, we do not poses
 this information, and we will follow a different path.
 \smallskip
 
 \noindent We define the linear form $T_\rho$ by the formula
\begin{equation}
T_\rho (\psi):= \int_X\langle \rho, (D' _X)^\star \psi\rangle_{\omega_E}e^{-\varphi_L}
dV_{\omega_E}
\end{equation} 
where $\psi$ is a $L$-valued smooth form of $(n, 1)$-type with compact support in $X\setminus Z$.

\noindent The current $T_\rho$ has the following properties.
\begin{lemme}\label{ort} Let $\psi$ be a test form as above, and consider the decomposition 
\begin{equation}\label{part6}
\psi= \xi_1+ \xi_2
\end{equation}
according to $\Ker(\dbar)$ and its orthogonal. Then $T_\rho(\xi_2)= 0$.
\end{lemme}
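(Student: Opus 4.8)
The guiding idea is that the current $T_\rho$ is assembled from the $L^2$ form $\rho$ and the current $\tau:=D'_X\rho$, which is $\dbar$-closed but in general \emph{not} square-integrable; consequently $T_\rho$ ought to be insensitive to the component of a test form lying in the orthogonal complement of $\Ker(\dbar)$. To make this precise I would first realise $\xi_2$ as a $\dbar^\star$-primitive. Since $\omega_E$ is complete and the ranges of $\dbar$ and $\dbar^\star$ are closed (Theorem \ref{Hodge}, which applies here because $\Theta(L,h_L)\ge 0$, cf. the remark following it), the orthogonal complement of $\Ker(\dbar)$ inside $L^2_{n,1}(X_0,L)$ is $\Imm\dbar^\star$; hence $\xi_2=\dbar^\star w$ for some $w\in L^2_{n,2}(X_0,L)\cap\Dom(\dbar^\star)$, and, using completeness once more, one may choose $w$ smooth with compact support in $X_0$ (the general case being recovered by approximation in the last step). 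The computation we are heading for is the purely formal chain
\[
  T_\rho(\dbar^\star w)=\int_X\langle\rho,(D'_X)^\star\dbar^\star w\rangle_{\omega_E}e^{-\varphi_L}\,dV_{\omega_E}
  =\langle\tau,\dbar^\star w\rangle=\langle\dbar\tau,w\rangle=0 ,
\]
the last equality being the $\dbar$-closedness of $\tau$.

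The middle steps are justified by a cut-off argument using the functions $\mu_\ep$ of Lemma \ref{cutoff}. As $\mu_\ep\rho$ has compact support in $X_0$ and is $L^2$, ordinary integration by parts (Stokes) gives
\[
  \int_X\langle\mu_\ep\rho,(D'_X)^\star\dbar^\star w\rangle e^{-\varphi_L}dV_{\omega_E}
  =\int_X\langle\partial\mu_\ep\wedge\rho,\dbar^\star w\rangle e^{-\varphi_L}dV_{\omega_E}
  +\int_X\langle\dbar\mu_\ep\wedge\tau,w\rangle e^{-\varphi_L}dV_{\omega_E},
\]
where in the last term I have used $D'_X(\mu_\ep\rho)=\partial\mu_\ep\wedge\rho+\mu_\ep\tau$, then $\dbar(\mu_\ep\tau)=\dbar\mu_\ep\wedge\tau$ together with $\dbar\tau=0$ on $X_0$. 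For $w$ compactly supported in $X_0$ the supports of $\partial\mu_\ep$ and $\dbar\mu_\ep$ shrink to the polar set $Z$ and are therefore disjoint from $\Supp(w)$ once $\ep$ is small; for the same $\ep$ one has $\mu_\ep\equiv1$ on $\Supp(w)$, so the left-hand side equals $T_\rho(\dbar^\star w)$. Hence $T_\rho(\dbar^\star w)=0$ for every smooth compactly supported $w$. For a general $w\in\Dom(\dbar^\star)$ — which one may take in $\Dom(\Delta'')$ via the Green operator — one approximates by compactly supported smooth forms and passes to the limit, using the a priori estimate
\[
  |T_\rho(\psi)|\le\|\rho\|\,\|(D'_X)^\star\psi\|\le\|\rho\|\bigl(\|\dbar\psi\|^2+\|\dbar^\star\psi\|^2\bigr)^{1/2},
\]
which follows from the Bochner--Kodaira--Nakano inequality on $(n,1)$-forms and the semipositivity of $\Theta(L,h_L)$ and which makes $T_\rho$ continuous for the graph norm. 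This gives $T_\rho(\xi_2)=0$.

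The single genuine obstacle is the absence of $L^2$ control on $\tau=D'_X\rho$. Were $\tau$ square-integrable it would lie in $\Ker(\dbar)$, hence be orthogonal to $\xi_2$, and the lemma would be immediate (this is essentially the situation covered by Theorem \ref{deprim}); since it is not, one is forced through the cut-off manipulation above, where the delicate point is the error term carrying $\dbar\mu_\ep\wedge\tau$. That term is kept under control — in fact it vanishes for small $\ep$ once $w$ is supported away from $Z$ — precisely because of the uniform bounds on $\partial\mu_\ep$ and $\ddbar\mu_\ep$ in the complete metric $\omega_E$ (Lemma \ref{cutoff}(c)), together with the square-integrability of $\rho$ for $(\omega_E,h_L)$.
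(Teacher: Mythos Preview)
Your argument for compactly supported $w$ is correct (and, as you note, almost trivial: the error terms vanish identically for small $\ep$). The gap is in the approximation step. Your a priori estimate $|T_\rho(\psi)|\le\|\rho\|\,(\|\dbar\psi\|^2+\|\dbar^\star\psi\|^2)^{1/2}$ is fine, but when you apply it with $\psi=\dbar^\star w_k-\xi_2$ you need $\|\dbar\dbar^\star(w_k-w)\|\to 0$, a \emph{second-order} condition that is not delivered by the standard Friedrichs approximation of $w$ in the $\dbar/\dbar^\star$ graph norm. (One could try to rescue this by invoking essential self-adjointness of $\Delta''$ on the complete manifold $(X_0,\omega_E)$, so as to approximate in the $\Delta''$-graph norm, but you have not argued this and it is not entirely routine in the present singular setting.) Conversely, if instead you approximate $\xi_2$ itself by compactly supported $\eta_k$ in the graph norm, these $\eta_k$ are no longer of the form $\dbar^\star(\text{compactly supported})$, so the vanishing you established does not apply to them.

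The paper proceeds differently and avoids this issue: it keeps $w$ (called $\lambda$ there, and chosen $\dbar$-closed) general from the outset, puts the cut-off on $\xi_2$ rather than on $\rho$, and confronts the dangerous term $\langle\dbar\mu_\ep\wedge D'\rho,\lambda\rangle$ directly. Writing $\dbar\mu_\ep\wedge D'\rho=\pm D'(\dbar\mu_\ep\wedge\rho)\pm\partial\dbar\mu_\ep\wedge\rho$ and integrating the $D'$ by parts yields $\langle\dbar\mu_\ep\wedge\rho,(D')^\star\lambda\rangle$; the key point is that $(D')^\star\lambda\in L^2$ by Bochner (since $\dbar\lambda=0$ and the curvature is semi-positive, one has $\|(D')^\star\lambda\|\le\|\dbar^\star\lambda\|=\|\xi_2\|$), so this term and the $\partial\dbar\mu_\ep$ term both tend to zero by dominated convergence. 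This is exactly the control of the ``delicate'' error term that your reduction to compactly supported $w$ was designed to sidestep---but the sidestep does not close.
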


\begin{proof} Note that $T_\rho(\xi_2)$ is indeed well-defined, thanks to Friedrichs lemma
(cf. e.g. \cite{bookJP}): the $L^2$ form $\xi_2$ belongs to the domain of $\dbar$, hence it is a limit in graph norm of smooth forms $(\xi_{2,k})$ with compact support. It follows that we have 
\begin{equation}\label{part7}
(D' _X)^\star (\xi_{2,k})\to (D' _X)^\star (\xi_{2})
\end{equation}
in $L^2$ as $k\to\infty$. This is a consequence of Bochner formula
\begin{equation}\label{part8}
\int_X |(D' _X)^\star (\xi_{2,k}- \xi_{2})|^2dV\leq \int_X |\dbar (\xi_{2,k}- \xi_{2})|^2dV+
\int_X |\dbar^\star (\xi_{2,k}- \xi_{2})|^2dV
\end{equation}
(here semi-positive curvature is sufficient) and by Friedrichs lemma the RHS of \eqref{part8} tends to zero as $k\to\infty$.
\smallskip

\noindent It follows that $T_\rho(\xi_{2})$ is well-defined and moreover we have 
\begin{equation}\label{part12}
T_\rho(\xi_{2})= \int_X\langle \rho, (D' _X)^\star \xi_{2}\rangle_{\omega_E}e^{-\varphi_L}.
\end{equation}
Thanks to the $L^2$ hypothesis on $\rho$, we have 
\begin{equation}\label{part13}
\int_X\langle \rho, (D' _X)^\star \xi_{2}\rangle_{\omega_E}e^{-\varphi_L}=
\lim_{\ep\to 0}\int_X\langle \rho, (D' _X)^\star (\mu_\ep\xi_{2})\rangle_{\omega_E}e^{-\varphi_L}
\end{equation}
where $\displaystyle (\mu_\ep)_{(\ep> 0)}$ is the family of cut-off functions 
adapted to Poincar\'e metric cf. \cite[Lemma 2.1]{CP23}

\noindent Indeed, \eqref{part13} follows since we have 
\begin{equation}\label{part14}
D'^\star (\mu_\ep\xi_{2})= \mu_\ep D'^\star (\xi_{2})+ {\partial \mu_\ep}\rfloor \xi_2
\end{equation}
and again by the $L^2$ condition, we have
\begin{equation}
\lim_{\ep\to 0}\int_X\langle \rho, D'^\star (\mu_\ep\xi_{2})\rangle_{\omega_E}e^{-\varphi_L}=
\lim_{\ep\to 0}\int_X\langle \rho, \mu_\ep D'^\star (\xi_{2})\rangle_{\omega_E}e^{-\varphi_L}.
\end{equation}

\noindent Therefore, we infer that 
\begin{equation}
\int_X\langle \rho, D'^\star \xi_{2}\rangle_{\omega_E}e^{-\varphi_L}= 
\lim_{\ep\to 0}\int_X\langle \mu_\ep D'\rho, \xi_{2}\rangle_{\omega_E}e^{-\varphi_L}.
\end{equation}
On the other hand, $\xi_2= \dbar^\star \lambda$ for some $L^2$ form $\lambda$
which can be assumed to be $\dbar$-closed
and then we write 
\begin{equation}\label{part171}
\int_X\langle \mu_\ep D'\rho, \xi_{2}\rangle_{\omega_E}e^{-\varphi_L}=
\int_X\langle \dbar \mu_\ep \wedge D'\rho, \lambda\rangle_{\omega_E}e^{-\varphi_L}
\end{equation}
because $D'\rho$ is $\dbar$-closed. The RHS term in \eqref{part171} is equal to 
\begin{equation}\label{part181}
\int_X\langle D'(\dbar \mu_\ep \wedge \rho), \lambda\rangle_{\omega_E}e^{-\varphi_L}
+ \int_X\langle \ddbar \mu_\ep \wedge \rho, \lambda\rangle_{\omega_E}e^{-\varphi_L}
\end{equation}
up to a sign. The second term of \eqref{part181} rends to zero as $\ep\to 0$, and so does 
the first one, because by Bochner formula we have 
\begin{equation}\label{part191}
\int_X|D'^{\star}(\lambda)|^2dV\leq \int_X|\dbar^\star\lambda|^2dV= \int_X|\xi_2|^2dV 
\end{equation}
(because we assume that $\dbar\lambda=0$), and the RHS of \eqref{part191}
is convergent. Note again that here the semi-positivity of the curvature is enough, given that $\lambda$ is of type $(n,2)$. 
\smallskip

\noindent In conclusion, the $T_\rho(\xi_{2})=0$.
 \end{proof}
 \smallskip
 
 \noindent Now we are ready to prove Theorem \ref{order2} and Theorem \ref{L2I}.
 
 \begin{proof}[Proof of Theorem \ref{L2I}]
Let $\psi$ be an $L$-valued smooth form of $(n, 1)$-type with compact support in $X\setminus Z$. Consider the Hodge decomposition \eqref{part6}; then we have
\begin{equation}
T_{v}(\psi)= T_v (\xi_1)
\end{equation} 
by Lemma \ref{ort}. Then it follows by Cauchy-Schwarz inequality combined with Bochner formula and 
the usual $L^2$ theory that there exists a $u$ such that
\begin{equation}
T_v (\psi)= \int_X\langle u, \dbar^\star \psi\rangle e^{-\varphi_L} 
\end{equation}
in other words we infer that 
\begin{equation}
D' _X (v)=  \dbar u  .
\end{equation}
 \end{proof}

  \begin{proof}[Proof of Theorem \ref{order2}]
 Our aim is to prove that the $\Lambda_2  |_X$ in \eqref{part1} is $\dbar$-exact,
which is equivalent to prove that the $\dbar$-closed form $D' _X ( \rho_2 |_X)$ is $\dbar$-exact.
It is a direct consequence of  Lemma \ref{ort} and Theorem \ref{L2I}.
  \end{proof}

\medskip

\noindent We prove next Theorem \ref{L2II}. Actually the motivation for this
result is that the form $\displaystyle \rho_2|_X$ is automatically $L^2$ \emph{if} we are using a non-singular metric on $X$.

\begin{proof}[Proof of Theorem \ref{L2II}]
  The main technical difficulty here is that the metric $\omega$ on $X\setminus Z$ is not complete. Usually this is bypassed by using a sequence of complete metrics
\begin{equation}\label{part15}
\omega_\ep:= \omega+ \ep \omega_Z
\end{equation}
and invoke the usual arguments in $L^2$ theory. This works perfectly for forms of type $(n, q)$ (because in this case, a monotonicity argument can be used) but in our case the form $v$ is of type $(n-1,1)$, hence in the absence of hypothesis (3), it is not necessarily $L^2$ with respect to $\omega_\delta$ above. We proceed as follows.
\smallskip

\noindent We recall that the weight $\varphi_L$ of the metric $h_L$ is assumed to have log poles along $Z$.
Let $\xi$ be an $L$-valued $(n, 1)$-form whose support is contained in $X\setminus Z$. We have to evaluate the quantity
\begin{equation}\label{part16}
\int_X\langle D'v, \xi\rangle e^{-\varphi_L}dV_\omega.
\end{equation}
%and our first observation is that this equals
%\begin{equation}\label{part17}
%\lim_{\ep\to 0}\int_X\langle D'v, \xi\rangle e^{-\varphi_{L, \ep}}dV_\omega.
%\end{equation}
The form $\xi$ can be written as
\begin{equation}\label{part18}
\xi= \xi_{1}+ \xi_{2}
\end{equation}  
according to $\Ker (\dbar)$ and its orthogonal. Since $D'v$ is $L^2$ and $\dbar$-closed, we have
\begin{equation}\label{part19}
\int_X\langle D'v, \xi_{2}\rangle e^{-\varphi_{L}}dV_\omega= 0 .
\end{equation}
So \eqref{part16} equals the expression
\begin{equation}\label{part20}
\int_X\langle D'v, \xi_{1}\rangle e^{-\varphi_{L}}dV_\omega.
\end{equation}

\bigskip

\noindent Note that we have 
\begin{equation}\label{part21}
\int_X\langle D'v, \xi_{1}\rangle e^{-\varphi_{L}}dV_\omega=
\lim_{\ep\to 0}\int_X\langle D'v, \xi_{1, \ep}\rangle e^{-\varphi_{L}}dV_{\omega_\ep}.
\end{equation}
In \eqref{part21}, the notations are as follows. The metric $\omega_\ep$ was introduced in 
\eqref{part15}, and $\xi_{1, \ep}$ is the orthogonal projection of $\xi$ on the $\Ker(\dbar)$ 
with respect to $(X\setminus Z, \omega_\ep)$ and $(L, h_L)$, respectively.
The equality \eqref{part21} follows from the fact that $\xi_{1, \ep}\to \xi_1$ uniformly on the compact sets of $X\setminus Z$.
\smallskip

\noindent It is at this point that we have to use the hypothesis (3): as a consequence of it, we can write
\begin{equation}\label{part22}
\int_X\langle D'v, \xi_{1, \ep}\rangle e^{-\varphi_{L}}dV_{\omega_\ep}= \int_X\langle v, D'^\star(\xi_{1, \ep})\rangle e^{-\varphi_{L}}dV_{\omega_\ep}.
\end{equation}
Moreover we have 
\begin{equation}\label{part233}
\lim_{\ep\to 0}\int_X|v|_{\omega_\ep}^2dV_{\omega_\ep}= \int_X|v|_{\omega}^2dV_{\omega}
\end{equation}
because if we write in coordinates 
\begin{equation}\label{part24}
v= \sum v_{p\ol q}\wh{dz_p}\wedge \ol{dz_q}, 
\end{equation}
then we have 
\begin{equation}\label{part25}
|v|_{\omega_\ep}^2dV_{\omega_\ep}= \sum |v_{p\ol q}|^2\frac{1+ \ep\lambda_p}{1+ \ep\lambda_q}dz \wedge d\ol z
\end{equation}
and the trivial inequality $\displaystyle \frac{1+ \ep\lambda_p}{1+ \ep\lambda_q}< 1+ \frac{\lambda_p}{\lambda_q}$ implies that $|v|_{\omega_\ep}^2dV_{\omega_\ep} \leq 
|v|_{\omega}^2dV_{\omega} + |v|_{\omega_Z}^2dV_{\omega_Z}$.
This allows us to use dominated convergence theorem and infer \eqref{part233}. 
\smallskip

The $L^2$ norm of the form $D'^\star(\xi_{1, \ep})$ is smaller than 
\begin{equation}\label{part26}
\int_X|\dbar^\star(\xi_{1, \ep})|^2e^{-\varphi_{L}}dV_{\omega_\ep}= \int_X|\dbar^\star \xi|^2e^{-\varphi_{L}}dV_{\omega_\ep}
\end{equation}
and since we assume from the 
beginning that the support of the form $\xi$ is contained in $X\setminus Z$, the limit as 
$\ep\to 0$ of the RHS of \eqref{part26} is precisely 
\begin{equation}\label{part27}
\int_X|\dbar^\star\xi|^2e^{-\varphi_{L}}dV_{\omega},
\end{equation}
and we are done.
\end{proof}
\medskip

\subsection{An application of Theorem \ref{order2}} In this subsection we consider the simplest case of a pluricanonical section whose corresponding divisor is unreduced. Let $s\in H^0(X, mK_{X})$ such that
\begin{equation}\label{july5}
\Div(s)= 2Z 
\end{equation}
where $Z$ is a non-singular hypersurface of the central fiber $X$ of our family.
\medskip

\noindent We have the following result.
\begin{thm}\label{unred} Let $s\in H^0(X, mK_{X})$ be a pluricanonical section, such that \eqref{july5} is satisfied. Then there exists a section $s_2\in\cC^{\infty}(\cX, mK_{\cX})$ such that the following hold
\begin{equation}\label{july6}
s_2|_X= s, \qquad \dbar s_2= t^3\Lambda_2.
\end{equation}  
\end{thm}

\begin{proof}
  We will show that under the hypothesis \eqref{july5} we can construct the vector field $\Xi$ as requested in Theorem \ref{order2}.
\smallskip

\noindent To this end, we first use Subsection \ref{firstorderex} , so that the first order extension $s_1$ of $s$ can be written as 
\begin{equation}\label{july7}
s_1|_{\Omega}\simeq \left(z_1^2+ tz_1f_{1}(z)\right)(dt\wedge dz)^{\otimes m}
\end{equation}
modulo $t^2$, where $\Omega$ is one of the $\Omega_i$, $z_1= 0$ is the equation of the divisor $Z$ on the central fiber $X$ and $f_1$ is holomorphic.
\smallskip

\noindent We change the coordinates on $\Omega$ by taking
\begin{equation}\label{july8}
w_1:= z_1+ \frac{t}{2}f_1(z), \qquad w_i= z_i
\end{equation}
for $i=2,\dots, n$. The new local expression for the section $s_1$ reads as
\begin{equation}\label{july9}
s_1|_{\Omega}\simeq w_1^2\big(1+ tg(w)\big)(dt\wedge dw)^{\otimes m}
\end{equation}
where $g$ is holomorphic, again modulo smooth terms divisible by monomials in $t$
and $\ol t$ of degree at least 2.
\smallskip

\noindent Let $h_s$ be the Hermitian metric on $\cX\setminus (s_1=0)$ defined as follows
\begin{equation}\label{july10}
h_s:= \omega+ \frac{\sqrt{-1}D's_1\wedge \ol{D's_1}}{|s_1|^2}
\end{equation}
where $\omega$ is an arbitrary smooth metric on $\cX$ and $D'$ is the covariant derivative given by the Chern connection induced by a smooth metric on $mK_\cX$.    
\smallskip

\noindent The metric $h_s$ is singular and in general not Kähler, but its task is to allow us to define $\Xi$ as the canonical lifting of the vector field
$\displaystyle \frac{\partial}{\partial t}$ in an intrinsic, computable manner. 
Recall that with respect to the coordinates $(t, w)$ as above we have
\begin{equation}\label{july11}
\Xi|_\Omega= \frac{\partial}{\partial t}- \sum v_i\frac{\partial}{\partial w_i}
\end{equation}  
where the coefficients $\displaystyle v_i:= \sum_{\alpha} h^{\ol \alpha i}h_{t\ol\alpha}$ are
expressed by the usual formula with respect to the coefficients of $h_s$.
\smallskip

\noindent We evaluate next briefly the coefficients of $h_s$; we write
\begin{equation}\label{ibs1}
D's_1|_\Omega= 2w_1dw_1+ \O(w_1^2)+ \O(t, \ol t)
\end{equation}
where $\O(w_1^2)$ and  $\O(t, \ol t)$ denote forms of type (1,0) on $\Omega$ whose coefficients belong to the ideal $(w_1^2)$ and $(t, \ol t)$, respectively. Therefore, at points $\Omega\cap X$ we can write 
\begin{equation}\label{ibs2}
h_{t\ol 1}= g_{t\ol 1}+ \O\Big(\frac{1}{\ol w_1}\Big), \qquad h_{t\ol \alpha}= g_{t\ol \alpha}+ \O(1)
\end{equation}
for each $\alpha=2,\dots, n$, where $g_{t\ol \alpha}$ are the coefficients of the non-singular metric $\omega$ Moreover, the determinant of the matrix corresponding to $h_s$ is clearly equal to $\displaystyle \frac{\O(1)}{|w_1|^2}$, where here $\O(1)$ is smooth, 
positive and bounded away from zero. 

\noindent Thus the coefficients of the inverse matrix can be written as
\begin{equation}\label{ibs3}
h^{\ol 1 1}= |w_1|^2\big(1+ \O(1)\big),\qquad h^{\ol\alpha 1}= |w_1|^2\Big(1+ \frac{\O(1)}{\ol w_1}\Big)
\end{equation} 
for each $\alpha= 2, \dots, n$
and hence we have
\begin{equation}\label{ibs4}
v_1= \O(w_1),\qquad v_i= \O(1)
\end{equation}
for each $i=2,\dots, n$.
\medskip

\noindent  This case-by case analysis shows that the requirements of Theorem {\ref{order2}} are satisfied, since in our present case the Poincaré metric is quasi-isometric to 
\begin{equation}\label{ibs5}
\frac{\sqrt{-1}dw_1\wedge d\ol w_1}{|w_1|^2\log^2|w_1|^2}+ \sum_{i=2}^n \sqrt{-1}{dw_i\wedge d\ol w_i}.
\end{equation}
\noindent This ends the proof of our theorem. 
\end{proof}
\medskip

\end{document}